\DeclarePairedDelimiter\floor{\lfloor}{\rfloor}
\DeclarePairedDelimiter\ceil{\lceil}{\rceil}
\newtheorem{theorem}{Theorem}[section]
\newtheorem{prop}[theorem]{Proposition}
\newtheorem{remark}[theorem]{Remark}
\newtheorem{hyp}{Hypothesis}
\newtheorem{lemma}[theorem]{Lemma}
\newtheorem{prob}{Problem}
\begin{document} 

\date{}
\title{\LARGE \bf Optimal Control of a Collective Migration Model}
\author[*]{Benedetto Piccoli}
\author[*]{Nastassia Pouradier Duteil}
\affil[*]{ Department of Mathematical Sciences, Rutgers University, Camden, NJ 08102, USA.
        {\tt\small piccoli@camden.rutgers.edu, nastassia.pouradierduteil@rutgers.edu}}%
\author[$\dag$]{Benjamin Scharf}
\affil[$\dag$]{ Technische Universit\"at M\"unchen, Fakult\"at Mathematik, Boltzmannstrasse 3 D-85748, Garching bei M\"unchen, Germany.
        {\tt\small scharf@ma.tum.de}}%


\maketitle

\begin{abstract}
\noindent Collective migration of animals in a cohesive group is rendered possible by a strategic distribution of tasks among members: some track the travel route, which is time and energy-consuming, while the others follow the group by interacting among themselves.  
In this paper, we study a social dynamics system modeling collective migration. We consider a group of agents able to align their velocities to a global \textit{target velocity}, or to follow the group via interaction with the other agents. The balance between these two attractive forces is our control for each agent, as we aim to drive the group to consensus at the target velocity. 
We show that the optimal control strategies in the case of final and integral costs consist of controlling the agents whose velocities are the furthest from the target one: these agents sense only the target velocity and become \textit{leaders}, while the uncontrolled ones sense only the group, and become \textit{followers}. 
Moreover, in the case of final cost, we prove an "Inactivation" principle: there exist initial conditions such that the optimal control strategy consists of letting the system evolve freely for an initial period of time, before acting with full control on the agent furthest from the target velocity. 

\end{abstract}

\section*{Introduction}

A fascinating feature of large groups 
is their \emph{self-organization} ability,
i.e.  the emergence from local interaction rules of certain global patterns.
For instance, animal groups such as schools of fish, flocks of birds or herds of mammals exhibit strong coordination in their movements, see e.g. \cite{Bellomo, Bellomo2, Camazine, Couzin, CouzinKrause, Niwa, Parrish, Parrish2, Romey, Toner, Viscek}. This collective behavior in animal groups also inspired applications to robotics (see \cite{Berman}), in which the aim is to coordinate autonomous vehicles \cite{Chuang, Jadbabaie, Leonard, Sugawara} and flight formations \cite{Perea, Sepulchre}. Other interests concern models in microbiology \cite{Horstmann, Horstmann2, Keller, Patlak, Perthame}, pedestrian and crowd motions \cite{Cristiani, Cristiani2} and financial markets \cite{Bae, During, Lasry}.
Such systems are usually referred to as social dynamics.
Examples of self-organization include clustering of the agents, alignment of velocities, or other kinds of equilibria, see \cite{Camazine, Tadmor,  Motsch, Niwa, Parrish, Parrish2, Toner}. This raises the question of understanding the mechanisms behind
the global pattern formation. 

A well-known model was proposed by F. Cucker and S. Smale (see \cite{CuckerSmale}) to describe the phenomenon of {\it consensus} in terms of alignment of velocities in a group on the move.
The Cucker-Smale model in formula is written as: 
\begin{equation} \label{Model_CS}
\begin{cases}
\dot{x_i} = v_i \\
\dot{v_i} = \cfrac{1}{N} \sum\limits_{j=1}^N \cfrac{v_j - v_i}{(1+\| x_j-x_i\|^2)^\beta}
\end{cases}\quad \text{ for } i\in\{1,...,N\},
\end{equation}
where $\beta>0$, and $x_i\in\mathbb{R}^d$ and $v_i\in\mathbb{R}^d$ are respectively the {\it state} and {\it velocity}.
This model was originally designed to describe the formation and evolution of language, and the variables $v_i$ can more generally represent opinions, preferences or invested capital. The system converges to consensus if $\beta\leq \frac12$, which corresponds to a strong interaction even between distant agents, see \cite{Caponigro2, Caponigro}. On the other hand, if $\beta>\frac12$, i.e. if the interaction is too weak, convergence to consensus only happens under certain conditions. More generally, the term $(1+\| x_j-x_i\|^2)^{-\beta}$ can be replaced by $a(\|x_j-x_i\|)$. Intuitively, it is natural to define $a$ as a non-increasing function, since proximity
often encourages interaction. On the other hand, it was proven that interactions modeled by non-decreasing functions $a$, called heterophilious, in fact enhance consensus (see \cite{Motsch}). When the system does not converge to a desired state, a natural question is to study the possibility of steering it via controls functions $u_i$, in which case the second equation of \eqref{Model_CS} becomes: $\dot{v_i} = \frac{1}{N} \sum_{j=1}^N a(\|x_j-x_i\|) (v_j - v_i)+u_i$ (see \cite{Caponigro2, Caponigro, Fornasier}).

In the \emph{collective migration} problem (see \cite{Leonard1}), not only do agents interact with one another to travel as a group, but they also gather clues from the environment guiding them towards a global \emph{target velocity}. In the case of migrating birds, for instance, this velocity can be sensed through a magnetic field, the direction of the sun, or environmental features. However, sensing the migration velocity is costly, both in used time and energy. A trade-off thus occurs between gathering this information, which ensures more precision, and following the group, which is less costly and saves time and energy for other tasks such as surveying for predators \cite{Dall, Guttal}. This problem also applies to the field of robotics, in which gathering information from the environment is done at the expense of communicating with other robots (or planes, drones, etc.) or performing other tasks, and to the field of economics when one aims to influence decisions of a group based on limited information. This trade-off naturally separates the group into \emph{leaders}, who gather information, and \emph{followers}, who only interact with the other agents (see \cite{Guttal}).  

We study a \emph{Collective Migration Model}, where the agents' dynamics is determined by two forces: 
the attraction towards a target velocity $V$ (which we assume can be sensed)
and the consensus dynamics as in the Cucker-Smale model.
More precisely, each agent's evolution is governed by a parameter $\alpha_i\in [0,1]$ which provides the balance between the two forces.
The system can be written as: 
\begin{equation}\label{Model_Migration}
\begin{cases}
\dot{x_i} = v_i \\
\dot{v_i} = \alpha_i (V-v_i) + (1-\alpha_i)\cfrac{1}{N} \sum\limits_{j=1}^N a(\|x_j-x_i\|) (v_j - v_i)
\end{cases}\quad \text{ for } i\in\{1,...,N\},
\end{equation}
\noindent where
$x_i\in\mathbb{R}^d$ and $v_i\in\mathbb{R}^d$ are the state and velocity,
$V\in\mathbb{R}^d$ is the target velocity,
and $\alpha_i \in [0,1]$ is the control, 
with the constraint $\sum_i \alpha_i \leq M$, $M>0$. 
In this paper, we choose to set $a\equiv 1$, so that the strength of interaction does not depend on the agents' positions. This is a reasonable hypothesis for instance if we consider groups of planes or drones that can communicate just as easily from great distances. 

While the Cucker-Smale model leads to alignment of all velocities to the average one (when there is consensus), the migration model tends to align all velocities to the preassigned {\it target velocity}. Our work focuses on finding optimal control strategies in order to achieve consensus to the target velocity, and in particular on selecting optimal {\it controlled leaders} among the agents when the control strength $M$ is small with respect to the size of the group. 
In order to do that,
we define the cost function $\tilde{\mathbb{V}} = \frac1N \sum_i \|v_i-V\|^2$, measuring the distance from consensus at the target velocity. We first show that, given any $M>0$, the strategy to decrease $\tilde{\mathbb{V}}$ instantaneously, with the constraint $\sum_i \alpha_i\leq M$, consists of distributing the control among the agents with the largest positive projections of velocities along $\bar{v}-V$ (where $\bar{v}$ is the mean velocity). In particular, if $\langle v_i, \bar{v}-V\rangle <0$, the agent $i$ is not controlled ($\alpha_i=0$).

We then study the optimal control strategy to minimize $\tilde{\mathbb{V}}$ at a fixed final time and first focus on the case of two agents, with control bounded by $M\in [0,2]$. The optimal control strategies depend on $M$ but, in all cases, we act with larger control on the agent with the largest projected velocity. Furthermore, if the final time is too short to bring the agents together, then there are initial conditions for which
at first the system must evolve with no control ($\alpha\equiv 0$).
We call this phenomenon "Inactivation", in line with the "Inactivation Principle" proven in \cite{Gauthier} in the context of arm movements. In this collaborative work with biologists, the authors prove that during fast arm movements, it is optimal to simultaneously inactivate both agonistic and antagonistic muscles for a short moment nearing the peak velocity.
We next generalize our results to any number of agents, but with the constraint $M\leq 1$. Then the optimal control strategy acts
with full strength on a sub-group of agents to bring them together.
Also in this case we observe "Inactivation", which
occurs when the initial average velocity $\bar{v}$ is very close to the target velocity $V$.
Indeed, driving the system
to $V$ requires both achieving consensus and moving the average
velocity towards $V$. If the average velocity is already close
to $V$, then we are left with inducing consensus which
happens naturally without control.
However, simulations show that Inactivation is rare and its performance gain is very minor
compared to a full-control strategy. 

Then we move on to examine integral costs $\int_0^T\tilde{\mathbb{V}}(t)dt$ and show that the optimal control strategy 
never exhibits Inactivation. More precisely,
we must use full control at all time splitting it evenly among the agents with the biggest projected velocity. 
Such a strategy is more restrictive than that with final cost,
since the controls are completely determined by initial conditions,
while previously we could use any strategy bringing agents
together at final time. 

The paper is organized as follows. In Section \ref{Sec:gen}, we define the cost functional and make general observations. In Section \ref{Sec:instantaneous}, we determine the strategy to decrease it instantaneously in time. Then, in Section \ref{Sec_opt}, we introduce the optimal control problem to minimize the cost function at a given final time. We solve it for the particular case of two agents (Section \ref{Sec:2agents}) before generalizing to any number of agents with a control bounded by $1$ (Section \ref{Sec:gencase}). Lastly we find optimal control strategies to minimize the integral cost (Section \ref{Sec:int_cost}).

\section{Cost function and general observations} \label{Sec:gen}

With no loss of generality, we set the target velocity $V$ to zero. Having simplified the interaction function $a$, 
system (\ref{Model_Migration}) reduces to:
\begin{equation}
\begin{cases}
\dot{x_i} = v_i \\
\dot{v_i} = - \alpha_i v_i + (1-\alpha_i)\cfrac{1}{N} \sum\limits_{j=1}^N  (v_j - v_i)
\end{cases} \quad i\in\{1,...,N\}.
\label{dynamics}
\end{equation}
We set a final time $T>0$. Then given $M>0$, we define the set of controls $\mathcal{U}_M$ as:
\begin{equation}\label{Um}
\mathcal{U}_M=\Big\{\alpha:[0,T]\rightarrow [0,1]^N \Big | \; \alpha \text{ measurable, s.t. for all } t, \;  \sum\limits_{i=1}^N\alpha_i(t) \leq M\Big\}.
\end{equation}
\subsection{Projection of the Dynamics}

Note that the dynamics (\ref{dynamics}) can be written in the more compact way:
\begin{equation} \label{dynamics2}
\begin{cases}
\dot{x_i} = v_i \\
\dot{v_i} = - v_i + (1-\alpha_i) \; \bar{v} ,
\end{cases}
\end{equation}
where $\bar v$ represents the mean velocity $\bar v = \frac1N \sum_i v_i$. The evolution of $\bar v$ is given by $\dot{\bar v}=-\frac1N (\sum_i\alpha_i) \bar v$, so the direction of $\bar v$ is an invariant of the dynamics. 
We begin by assuming that the initial average velocity is different from the target one:
\begin{hyp}
\label{hyp_first}
$\bar{v}(0)\neq 0$.
\end{hyp}
This first assumption is only made in order to render the problem interesting. Indeed, if $\bar v(0)=0$, i.e. if the mean velocity is already at the target velocity $V$, then according to the evolution $\dot{\bar v} = -(\sum_i \alpha_i)\bar v$, it would hold $\bar v(t) = 0$ for all $t\geq 0$. Then looking at Equation \eqref{dynamics2}, we notice that the system is not controllable and that each velocity decreases exponentially to zero.
We can then define the invariant unit vector   $e = \frac{\bar{v}}{\|\bar{v}\|}$.

Let $w_i = v_i - \langle v_i, e \rangle \; e$ be the projection of $v_i$ over $(\bar{v}^\perp)$. Then
\[
\dot{w}_i  = -v_i + (1-\alpha_i) \; \bar{v} - \langle -v_i + (1-\alpha_i) \; \bar{v} , e \rangle \; e = - w_i.
\]
Therefore the projection of $v_i$ over $(\bar{v}^\perp)$ decreases exponentially, independently of the controls $\alpha_i$.
Let us now define $\xi_i = \langle v_i, e \rangle$. Its evolution is given by: $\dot\xi_i = - \langle v_i, e \rangle + (1-\alpha_i) \langle \bar{v}, e \rangle = - \xi_i + (1-\alpha_i) \| \bar{v} \| =  - \xi_i + (1-\alpha_i) \bar\xi$.
In the following, we will only study the equations governing the evolution of the projected variables $\xi_i$:
\begin{equation}
 \text{For all } i\in \{1,...,N\}, \quad \dot\xi_i =  - \xi_i + (1-\alpha_i) \bar\xi ,
\label{scalar}
\end{equation}
where $\bar\xi=\frac1N \sum_j \xi_j$. 
This is a significant result: instead of studying a system evolving in $\mathbb{R}^{Nd}$, we consider a system in $\mathbb{R}^N$, thus greatly reducing the complexity of theoretical and numerical analyses.
Hereafter we shall make the following hypothesis:
\begin{hyp}
\label{hyp_main} 
$\xi_i(0)\geq \xi_{i+1}(0)$ for every $ i\in \{1,...,N-1\}$.
\end{hyp}
\noindent  
This assumption allows us to order the initial projected velocities without loss of generality.
\begin{prop}\ \\
Having made Hyp. \ref{hyp_first} and Hyp. \ref{hyp_main}, it holds $\bar{v}(t) \neq 0$ and $\bar{\xi}(t)> 0$ for all $t \in [0,T]$. \\
Furthermore, let $\tau\in[0,T]$. If $\xi_i(\tau)\geq 0$, then $\xi_i(t)\geq 0$ for all $t\in[\tau,T]$. If $\xi_i(\tau)> 0$, then $\xi_i(t)> 0$ for all $t\in[\tau,T]$.
\label{prop_positive}
\end{prop}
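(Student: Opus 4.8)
The plan is to exploit the explicit scalar structure of the projected dynamics in \eqref{scalar}, together with the evolution $\dot{\bar\xi} = -\frac1N(\sum_i\alpha_i)\bar\xi$ already derived for $\bar v$ (note $\bar\xi = \|\bar v\|$). First I would establish the claim about $\bar\xi$: since $\dot{\bar\xi} = -c(t)\bar\xi$ with $c(t) = \frac1N\sum_i\alpha_i(t) \in [0, M/N]$ measurable and bounded, Gr\"onwall (or simply integrating the linear ODE) gives $\bar\xi(t) = \bar\xi(0)\exp\left(-\int_0^t c(s)\,ds\right) > 0$ for all $t$, using Hyp.~\ref{hyp_first}, which guarantees $\bar\xi(0) = \|\bar v(0)\| > 0$. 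This simultaneously yields $\bar v(t)\neq 0$ since $\|\bar v(t)\| = \bar\xi(t) > 0$ (here I use that the direction of $\bar v$ is invariant, as observed in the text, so $\bar v$ cannot vanish without $\|\bar v\|$ vanishing).

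Next, for the propagation-of-positivity statement, I would fix $i$ and $\tau$ with $\xi_i(\tau)\geq 0$ and treat \eqref{scalar} as a scalar linear nonhomogeneous ODE in $\xi_i$ with a nonnegative forcing term: $\dot\xi_i = -\xi_i + g_i(t)$ where $g_i(t) = (1-\alpha_i(t))\bar\xi(t) \geq 0$ because $\alpha_i(t)\in[0,1]$ and $\bar\xi(t) > 0$ by the first part. The variation-of-constants formula gives, for $t\geq\tau$,
\[
\xi_i(t) = e^{-(t-\tau)}\xi_i(\tau) + \int_\tau^t e^{-(t-s)} g_i(s)\,ds,
\]
and both terms on the right are nonnegative, so $\xi_i(t)\geq 0$. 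For the strict version, if $\xi_i(\tau) > 0$ the first term is strictly positive, so $\xi_i(t) > 0$; alternatively one notes the integral term is strictly positive for $t > \tau$ whenever $g_i$ is not a.e.\ zero, but the $e^{-(t-\tau)}\xi_i(\tau)$ term already suffices.

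Finally, to conclude $\bar\xi(t) > 0$ purely from the ODE for $\bar\xi$ is the cleanest route, but one could alternatively deduce it from Hyp.~\ref{hyp_main}: since $\xi_1(0)\geq\cdots\geq\xi_N(0)$ and $\bar\xi(0) > 0$, at least $\xi_1(0) > 0$, and the positivity-propagation argument keeps $\xi_1(t) > 0$; however this alone does not bound $\bar\xi$ from below without more work, so I would keep the direct Gr\"onwall argument for $\bar\xi$ as the primary one and use Hyp.~\ref{hyp_main} only where the ordering is genuinely needed later. I do not anticipate a serious obstacle here: the only point requiring a little care is the measurability/integrability of $\alpha_i$ and hence of $c$ and $g_i$, which is exactly what the definition of $\mathcal{U}_M$ in \eqref{Um} provides, so the variation-of-constants formula and Gr\"onwall apply verbatim on $[0,T]$.
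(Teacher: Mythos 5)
Your proof is correct and follows essentially the same route as the paper: the positivity of $\bar\xi$ via the linear ODE $\dot{\bar\xi}=-\frac1N(\sum_i\alpha_i)\bar\xi$ together with Gr\"onwall (the paper bounds $\dot{\bar\xi}\geq-\frac{M}{N}\bar\xi$ to get $\bar\xi(t)\geq e^{-Mt/N}\bar\xi(0)>0$), and the propagation of (strict) nonnegativity of $\xi_i$ via the variation-of-constants formula with the nonnegative forcing $(1-\alpha_i)\bar\xi$. No gaps.
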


\begin{proof}
The proposition is mainly a consequence of Gronwall's inequality: It holds
\begin{equation}\label{xibarvbar}
{\bar \xi}=\frac1N \sum_j \langle v_j,\frac{\bar{v}}{\|\bar{v}\|}\rangle =\langle \bar{v},\frac{\bar{v}}{\|\bar{v}\|}\rangle = \|\bar{v}\|
\end{equation}
and
$$\dot{\bar{\xi}} = - \frac1N \left(\sum_{i=1} \alpha_i \right) \bar{\xi} \geq - \frac{M}{N} \bar{\xi}.
$$
Hence, if $\bar{v}(0) \neq 0$ and therefore $\bar{\xi}(0)> 0$, then $\bar{\xi}(t)\geq  e^{-Mt/N}\bar{\xi}(0)>0$ and thus $\bar{v}(t) \neq 0$ for all $t \in [0,T]$. 
Now notice that from \eqref{scalar} we can compute for all $t\in [\tau,T]$: $\xi_i(t)=e^{-(t-\tau)} (\xi_i(\tau) + \int_\tau^t (1-\alpha_i)(s)\bar{\xi}(s)e^{s-\tau} ds)$, so $\xi_i(t)\geq e^{-(t-\tau)} \xi_i(\tau)$, which proves the second part of the proposition.
\end{proof}

\subsection{Migration functional}

We introduce the functional
\begin{equation}
\tilde{\mathbb{V}} = \frac1N \sum\limits_{i=1}^N \|v_i-V\|^2 ,
\end{equation}
which measures the distance from consensus at the desired velocity $V$. Since we set $V=0$, $\tilde{\mathbb{V}}$ reduces to: $\tilde{\mathbb{V}} = \frac1N \sum_i \|v_i\|^2 $. 
In the new projected coordinates $\xi$, the migration functional can be written as:
$\tilde{\mathbb{V}} = \frac1N \sum_i (\|w_i\|^2 + \xi_i^2)$, where only the second term $\xi_i^2$ can be controlled. 
Hence, here onward we will only consider the controllable part of $\tilde{\mathbb{V}}$, which we denote $\mathbb{V}$: 
\begin{equation}
\mathbb{V} = \frac1N \sum\limits_{i=1}^N \xi_i^2.
\label{Vxi}
\end{equation}
Notice that $\mathbb{V}$ can be written as a sum of two terms:
\begin{equation}\label{Vdecomp}
\mathbb{V}=\bar\xi^2+\frac1N\sum\limits_{i=1}^N(\xi_i-\bar\xi)^2,
\end{equation}
which should be minimized simultaneously (where we remind that $\bar{\xi}=\frac{1}{N}\sum_i\xi_i$).
Minimizing $\bar\xi ^2$ (or $\bar\xi$, since according to Proposition \ref{prop_positive}, $\bar\xi>0$) corresponds to steering the system as a whole to the desired velocity $V=0$. On the other hand, minimizing $ \frac1N\sum_i(\xi_i-\bar\xi)^2$ corresponds to driving the system to consensus. However, the dynamics (\ref{scalar}) of $\xi_i$ show that if $\xi_i<0$, decreasing $\bar\xi$ slows down the increase of $\xi_i$, resulting in a possible increase of $(\xi_i-\bar\xi)^2$. 
Hence, minimizing $\mathbb{V}$ requires balancing the decrease of the two terms in (\ref{Vdecomp}). 

\subsection{Minimization problems}\label{Sec:techniques}

In the following sections, we will deal with the minimization of different quantities, in order to design a strategy for consensus at the migration velocity $V=0$. Having fixed the final time $T$ a priori, we address three problems:
\begin{description}
  \item[($i$)] The minimization of $\frac{d \mathbb{V}}{dt}$, i.e. the maximization of the instantaneous decrease of $\mathbb{V}$ (see Section \ref{Sec:instantaneous}).
  \item[($ii$)] The minimization of the final cost $\mathbb{V}(T)$ (see Sections \ref{Sec_opt}, \ref{Sec:2agents} and \ref{Sec:gencase}).
  \item[($iii$)] The minimization of the integral cost $\int_0^T \mathbb{V}(t) dt$ (see Section \ref{Sec:int_cost}).
\end{description}

In order to minimize $(ii)$ $\mathbb{V}(T)$ and $(iii)$ $\int_0^T \mathbb{V}(t) dt$, we will design an optimal control strategy using Pontryagin's maximum principle. The minimization of $\dot{\mathbb{V}}$, on the other hand, will not provide an optimal control.

\section{Instantaneous Decrease}\label{Sec:instantaneous}
In this section we look for a control strategy maximizing the instantaneous decrease of $\mathbb{V}$. Strategies designed in this way are not optimal (in general), but are easier to study and can give a first good insight on the problem. Indeed, we will later compare the instantaneous decrease strategy to the optimal control strategies developed in Sections \ref{Sec:gencase} and \ref{Sec:int_cost}.

The time derivative of the migration functional $\mathbb{V}$ is given by: 
\[
\dot{\mathbb{V}} = \frac2N \sum\limits_{i=1}^N \xi_i \dot{\xi}_i = \frac2N \left( \sum\limits_{i=1}^N - \xi_i^2 + \sum\limits_{i=1}^N (1-\alpha_i) \bar{\xi} \xi_i  \right) 
 = -2 \mathbb{V} + \frac2N \bar{\xi}  \sum\limits_{i=1}^N (1-\alpha_i)\xi_i .
 \]
Since $\bar{\xi}\geq 0$, minimizing $\dot{\mathbb{V}}$ amounts to the following problem:
\[
\text{Find} \; \; \;  \text{min} \sum\limits_{i=1}^N (1-\alpha_i) \xi_i,
\]
which can be done as follows (where $\floor*{M}$ and $\ceil{M}$ respectively denote the floor and the ceiling of $M$, and $|\cdot|$ denotes the cardinality of a set): 
\begin{prop}\label{prop_inst}
Suppose that $\xi_1(t)\geq...\geq \xi_{N}(t)$ (or re-arrange the agents so that this is satisfied).
Then the following strategy minimizes $\frac{d}{dt}\mathbb{V}$ at time $t$: \\
Define $I^+(t)=\{i\in\{1,..,N\}, \; \xi_i(t) > 0 \}$. \\
If $|I^+(t)|\leq M$, then set $\alpha_i(t)=1$ if $i\in I^+$ and $\alpha_i(t)=0$ otherwise.\\
If $|I^+(t)|>M$ and $\xi_{\ceil{M-1}}>\xi_{\ceil{M}}>\xi_{\ceil{M+1}}$ then set
 $\alpha_i(t)=1$ if $i\leq \floor{M}$, $\alpha_{\floor{M}+1}(t)=M-\floor{M}$ and $\alpha_i(t)=0$ otherwise.\\
If $|I^+(t)|>M$ and $\xi_{\ceil{M-1}}=\xi_{\ceil{M}}$ or $\xi_{\ceil{M}}=\xi_{\ceil{M+1}}$, let $I_{\ceil{M}}=\{i\in\{1,...,N\}, \; \xi_i(t)=\xi_{\ceil{M}}(t)\}$ and $I_{\ceil{M}}^*=\{1, ...,\ceil{M}\}\setminus I_{\ceil{M}} $ . Then set 
$\alpha_i(t)=1$ if $i\in I_{\ceil{M}}^*$, $\alpha_i(t)=\frac{M-|I_{\ceil{M}}^*|}{|I_{\ceil{M}}|}$ 
if $i\in I_{\ceil{M}}$ and $\alpha_i(t)=0$ otherwise.\\
\end{prop}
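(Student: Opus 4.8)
The plan is to recognize the minimization of $\dot{\mathbb{V}}$ at the fixed time $t$ as a linear program over the polytope $\mathcal P_M=\{\alpha\in[0,1]^N:\sum_i\alpha_i\le M\}$, and to solve it by the standard ``fractional knapsack with unit weights'' argument; the three cases of the statement are then just the three possible shapes of the optimal face of $\mathcal P_M$.

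\emph{Reduction to a linear program.} From the computation $\dot{\mathbb{V}}=-2\mathbb{V}+\tfrac2N\bar\xi\sum_{i=1}^N(1-\alpha_i)\xi_i$ and from $\bar\xi=\|\bar v\|\ge0$ (see \eqref{xibarvbar}), and since the values $\mathbb{V}(t)$ and $\bar\xi(t)$ are determined by the current state and not by $\alpha(t)$, minimizing $\dot{\mathbb{V}}$ over $\alpha(t)\in\mathcal P_M$ is equivalent, when $\bar\xi>0$, to maximizing $f(\alpha):=\sum_{i=1}^N\alpha_i\xi_i$ over $\mathcal P_M$ (if $\bar\xi=0$ the derivative does not depend on $\alpha$ and every control, in particular the proposed one, is optimal). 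Hereafter I drop the time $t$ and use the ordering $\xi_1\ge\dots\ge\xi_N$.

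\emph{A sharp upper bound for $f$.} I would prove the bound by a ``subtract the threshold'' trick (which is really weak LP duality written out). Choose $c\ge0$ to be $0$ if $|I^+|\le M$ and $c=\xi_{\ceil{M}}$ otherwise; when $|I^+|>M$ one checks $\ceil M\le|I^+|$, so this $c$ is $>0$. For any $\alpha\in\mathcal P_M$ write $f(\alpha)=\sum_i\alpha_i(\xi_i-c)+c\sum_i\alpha_i$; since $\alpha_i\le1$, each term with $\xi_i>c$ is $\le\xi_i-c$, each term with $\xi_i\le c$ is $\le0$, and $c\sum_i\alpha_i\le cM$ because $c\ge0$. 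Hence $f(\alpha)\le\sum_{i:\xi_i>c}(\xi_i-c)+cM$. If $c=0$ this reads $f(\alpha)\le\sum_{i\in I^+}\xi_i$; if $c=\xi_{\ceil M}$ it reads $f(\alpha)\le\sum_{i:\xi_i>c}\xi_i+(M-p)c$ with $p:=|\{i:\xi_i>c\}|$.

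\emph{Attaining the bound and matching the cases.} If $|I^+|\le M$, the control $\alpha_i=1$ for $i\in I^+$ and $\alpha_i=0$ otherwise is feasible and gives $f=\sum_{i\in I^+}\xi_i$: this is case $(i)$. If $|I^+|>M$, put $c=\xi_{\ceil M}$, let $S=\{i:\xi_i>c\}$ and $I_{\ceil M}=\{i:\xi_i=c\}$; one checks $|S|=p\le\ceil M-1<M$ (so $M-p>0$) and that positions $p+1,\dots,\ceil M$ all have value $c$, whence $|I_{\ceil M}|\ge\ceil M-p\ge M-p$ (this is exactly why $(M-p)/|I_{\ceil M}|\in(0,1]$), and moreover $S=\{1,\dots,\ceil M\}\setminus I_{\ceil M}=I_{\ceil M}^*$. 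Then the control $\alpha_i=1$ on $S$, $\alpha_i=(M-p)/|I_{\ceil M}|$ on $I_{\ceil M}$, $\alpha_i=0$ elsewhere is feasible with $\sum_i\alpha_i=M$ and $f=\sum_{i\in S}\xi_i+(M-p)c$, which attains the bound. Finally I would translate this description: if $\xi_{\ceil{M-1}}>\xi_{\ceil M}>\xi_{\ceil{M+1}}$ the tied block is the single agent $\ceil M$, so $p=\ceil{M}-1=\floor M$ and the control collapses to $\alpha_i=1$ for $i\le\floor M$, $\alpha_{\floor M+1}=M-\floor M$ (case $(ii)$, covering both the integer and non-integer $M$ via $\floor{M}$); otherwise it is precisely the control of case $(iii)$.

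The argument is elementary, so I do not expect a substantive obstacle; the only real care needed is the bookkeeping with floors and ceilings: correctly pinning the threshold value $c$ to $\xi_{\ceil M}$, deriving the feasibility inequalities $0<(M-p)/|I_{\ceil M}|\le1$ from the relations between $p$, $\ceil M$ and $|I_{\ceil M}|$, and checking that the index-based descriptions in the statement coincide with ``full control on the agents strictly above the threshold, remaining budget spread evenly over the agents tied at the threshold.''
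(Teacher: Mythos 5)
Your proposal is correct and follows the paper's approach: the paper reduces minimizing $\dot{\mathbb{V}}$ to minimizing $\sum_i(1-\alpha_i)\xi_i$ (equivalently maximizing $\sum_i\alpha_i\xi_i$ over the control polytope) and simply asserts the greedy allocation, while your threshold bound $f(\alpha)\le\sum_{i:\xi_i>c}(\xi_i-c)+cM$ with $c=\xi_{\lceil M\rceil}$ supplies the verification the paper omits. One tiny bookkeeping slip: for integer $M$ the identity $\lceil M\rceil-1=\lfloor M\rfloor$ fails (there $p=M-1$ while $\lfloor M\rfloor=M$), but your construction still yields full control on agents $1,\dots,M$, which agrees with the stated case $(ii)$ since then $\alpha_{\lfloor M\rfloor+1}=M-\lfloor M\rfloor=0$, so the conclusion is unaffected.
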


\section{Optimal control for final cost} \label{Sec_opt}

In this section, we focus on 
problem $(ii)$ (see Section \ref{Sec:techniques}), i.e. 
minimizing the migration functional $\mathbb{V}$ at final time $T$ using Pontryagin's maximum principle.

Let us compute the Hamiltonian $H$ of the scalar system (\ref{scalar}): 
\begin{equation} \label{Ham}
H 
 = \sum\limits_{i=1}^N \lambda_i \left( -\xi_i + (1-\alpha_i) \bar{\xi} \right) = -  \bar{\xi} \sum\limits_{i=1}^N \alpha_i \lambda_i  + \sum\limits_{i=1}^N \lambda_i \left( - \xi_i + \bar{\xi} \right).
\end{equation}
By Pontryagin's maximum principle \cite{Pont}, if $\alpha\in\mathcal{U}_M$, associated with the trajectory $\xi$, is optimal on $[0,T]$, then there exists $\lambda : [0,T]\rightarrow \mathbb{R}^N$ such that $\dot{\xi}=\frac{\partial H}{\partial \lambda}$ and $\dot{\lambda}=-\frac{\partial H}{\partial \xi}$. Furthermore the following minimization condition holds for almost all $t\in [0,T]$:
\begin{equation} \label{MinH}
H(t,\xi(t),\lambda(t),\alpha(t))=\min\limits_{\beta\in\mathcal{U}_M} H(t,\xi(t),\lambda(t),\beta(t)).   
\end{equation}
Since $\bar{\xi}\geq 0$, minimizing $H$ requires to set $\alpha_i = 1$ on the biggest positive $\lambda_i$. 
The differential equation for the covectors $\lambda_i$ gives:
\begin{equation}
\dot{\lambda}_i = - \frac{\partial H}{\partial \xi_i} = \frac{1}{N}\sum\limits_{j=1}^N \alpha_j \lambda_j- \bar{\lambda} + \lambda_i, \quad  i \in \{1,...,N\}. \; 
\label{lambda}
\end{equation}
From this we can also compute the evolution of $\bar{\lambda}=\frac{1}{N}\sum_i\lambda_i$:
\begin{equation}\label{lambdabar}
\dot{\bar{\lambda}}=\frac{1}{N}\sum_{j=1}^N\alpha_j\lambda_j.
\end{equation}
Since the final condition for $\xi$ is not fixed, the final condition for $\lambda$ at time $T$ gives:
\begin{equation}
\lambda(T) = \nabla \mathbb{V}(\xi(T)) = \left( \frac2N \xi_1(T), ... , \frac2N \xi_N(T)\right).
\label{lambda_final}
\end{equation}

\begin{prop}\label{prop_equality}
If $\bar t>0$, $i,j\in\{1,...,N\}$ , and $\lambda_i(\bar t)=  \lambda_j(\bar t)$, then 
$\lambda_i(t)=  \lambda_j(t)$ for all $t$. 
In this case, for a given control $\alpha$, any control $\tilde{\alpha}$ satisfying $\tilde{\alpha_i}+\tilde{\alpha_j} = \alpha_i +\alpha_j$ and $\tilde{\alpha_k} = \alpha_k$ for every $k \neq i,j$ gives the same evolution of $\lambda$. If the control $\alpha$ satisfies the Pontryagin Maximum Principle, then the control $\tilde{\alpha}$ also does.
\end{prop}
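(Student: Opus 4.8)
The plan is to exploit two structural features already visible above: (a) the control enters both the adjoint system \eqref{lambda} and the Hamiltonian \eqref{Ham} only through the single scalar $\sum_{k=1}^N\alpha_k\lambda_k$, and (b) the difference of any two covectors solves an autonomous linear ODE.

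For the first assertion I would subtract equation \eqref{lambda} for index $j$ from the one for index $i$: the terms $\frac1N\sum_k\alpha_k\lambda_k$ and $\bar\lambda$ do not depend on the index and cancel, leaving $\frac{d}{dt}(\lambda_i-\lambda_j)=\lambda_i-\lambda_j$. Hence $\lambda_i(t)-\lambda_j(t)=(\lambda_i(\bar t)-\lambda_j(\bar t))\,e^{\,t-\bar t}$ for every $t$, and $\lambda_i(\bar t)=\lambda_j(\bar t)$ forces $\lambda_i\equiv\lambda_j$ on the whole interval. This step uses nothing about the particular control.

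Assume now $\lambda_i\equiv\lambda_j$. For any admissible $\tilde\alpha$ with $\tilde\alpha_i+\tilde\alpha_j=\alpha_i+\alpha_j$ and $\tilde\alpha_k=\alpha_k$ for $k\neq i,j$, I would compute $\sum_k\tilde\alpha_k\lambda_k=(\tilde\alpha_i+\tilde\alpha_j)\lambda_i+\sum_{k\neq i,j}\tilde\alpha_k\lambda_k=(\alpha_i+\alpha_j)\lambda_i+\sum_{k\neq i,j}\alpha_k\lambda_k=\sum_k\alpha_k\lambda_k$. Since the right-hand side of \eqref{lambda} depends on the control only through $\frac1N\sum_k\alpha_k\lambda_k$, the same function $\lambda$ solves the adjoint system driven by $\tilde\alpha$; carrying the same terminal data \eqref{lambda_final}, it is by Cauchy--Lipschitz the unique such solution, so $\tilde\alpha$ produces the same evolution of $\lambda$. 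For Pontryagin's principle the adjoint equation for $\tilde\alpha$ then holds by construction; writing \eqref{Ham} as $H=-\bar\xi\sum_k\alpha_k\lambda_k+\sum_k\lambda_k(-\xi_k+\bar\xi)$, the control-dependent part is, by the identity just displayed, unchanged when $\alpha$ is replaced by $\tilde\alpha$, so $\tilde\alpha$ realizes the pointwise minimum \eqref{MinH} whenever $\alpha$ does, and $\tilde\alpha\in\mathcal U_M$ since $\sum_k\tilde\alpha_k=\sum_k\alpha_k\leq M$. It then remains only to see that the transversality relation \eqref{lambda_final} stays compatible, for which I would use $\lambda_i(T)=\lambda_j(T)$ together with the identities from \eqref{dynamics2} for the quantities untouched by the re-splitting ($\bar\xi$, the coordinates $\xi_k$ with $k\neq i,j$, and $\xi_i+\xi_j$).

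I expect this last point — reconciling the re-split control with the endpoint condition \eqref{lambda_final} — to be the only delicate part; everything else reduces to the cancellation in the paragraph above, and paragraphs one and two are essentially one line each.
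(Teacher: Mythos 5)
Your first two steps are exactly the paper's proof: the difference $z_{ij}=\lambda_i-\lambda_j$ satisfies $\dot z_{ij}=z_{ij}$ because the control enters \eqref{lambda} only through the index-independent scalar $\frac1N\sum_k\alpha_k\lambda_k$, so $z_{ij}\equiv 0$ once it vanishes somewhere; and since $\sum_k\tilde\alpha_k\lambda_k=\sum_k\alpha_k\lambda_k$ when $\lambda_i\equiv\lambda_j$, the same $\lambda$ solves the adjoint system for $\tilde\alpha$ and the controllable part of \eqref{Ham} is unchanged, so the minimization condition \eqref{MinH} and the constraint $\sum_k\tilde\alpha_k\le M$ carry over. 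That is all the paper proves, and it is all the proposition is used for later (the PMP cannot distinguish how the total control is split between agents with equal covectors).

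The one place you diverge is the final ``delicate part'' about reconciling $\tilde\alpha$ with the transversality condition \eqref{lambda_final}, and the route you sketch will not close: the quantities preserved by the re-splitting are $\bar\xi$, the $\xi_k$ with $k\neq i,j$, and the sum $\xi_i+\xi_j$, but \eqref{lambda_final} constrains $\xi_i(T)$ and $\xi_j(T)$ \emph{individually}, and these do change. For instance with $\tilde\alpha_i=\alpha_i+\alpha_j$, $\tilde\alpha_j=0$ one has $\dot\xi_i^{\tilde\alpha}=-\xi_i^{\tilde\alpha}+(1-\alpha_i-\alpha_j)\bar\xi<\dot\xi_j^{\tilde\alpha}+ \dots$, so $\xi_i^{\tilde\alpha}(T)<\xi_j^{\tilde\alpha}(T)$ even though $\lambda_i(T)=\lambda_j(T)$, and $\lambda(T)=\nabla\mathbb{V}(\xi^{\tilde\alpha}(T))$ fails. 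The paper simply does not address this: its claim that ``$\tilde\alpha$ satisfies the PMP'' is meant with respect to the \emph{same} covector $\lambda$ (adjoint equation plus Hamiltonian minimization), not with respect to a covector re-derived from $\tilde\alpha$'s own endpoint. So you should either drop that step and read the statement the way the paper does, or note explicitly that transversality is only claimed for the original extremal $\alpha$; attempting to prove it for $\tilde\alpha$ as stated would require a false identity.
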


\begin{proof}
Assume that at time $\bar{t}$, $\lambda_i(\bar{t})= \lambda_j(\bar{t})$.
Let us define $z_{ij} = \lambda_i - \lambda_j.$
The evolution of $z_{ij}$ is given by:
$\dot{z}_{ij} = \dot{\lambda}_i - \dot{\lambda}_j = \lambda_i - \lambda_j = z_{ij}.$
Hence, $z_{ij}(t)=z_{ij}(\bar{t}) e^{t-\bar{t}}$, and if $z_{ij}(\bar{t})=0$, then for all $t$, $z_{ij}(t)=0$, i.e. $\lambda_i(t)=\lambda_j(t)$.
From this it follows that if $\alpha$ minimizes the Hamiltonian $H$, then any control $\tilde{\alpha}$ satisfying $\tilde{\alpha_i}+\tilde{\alpha_j} = \alpha_i +\alpha_j$ and $\tilde{\alpha_k} = \alpha_k$ also minimizes $H$, since one easily sees from (\ref{Ham}) that $H^\alpha=H^{\tilde{\alpha}}$ (where we denote by $H^\alpha$ the Hamiltonian obtained with the control function $\alpha$).
\end{proof}

Still assuming that the projected velocities are initially ordered (Hypothesis \ref{hyp_main}), the following lemma will allow us to further assume that they are ordered at all time. 

\begin{lemma}\label{lemma_orderxi}\ \\
There exists an optimal strategy satisfying the following: For all $ t\in [0,T]$,
\begin{equation} \label{orderxi}
 \text{If } i<j, \text{ then } \xi_i(t)\geq\xi_j(t). 
\end{equation} 
\end{lemma}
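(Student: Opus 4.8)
The plan is to start from an arbitrary optimal control $\alpha\in\mathcal U_M$ with associated trajectory $\xi$ (an optimal control exists, e.g. by the Filippov existence theorem, since the dynamics \eqref{scalar} is affine in $\alpha$ and the control set $\{\alpha\in[0,1]^N:\sum_i\alpha_i\le M\}$ is convex and compact), and to \emph{sort the trajectory}: I will build a new admissible control $\tilde\alpha$ whose trajectory $\tilde\xi(t)$ is, at each time $t$, the decreasing rearrangement of $(\xi_1(t),\dots,\xi_N(t))$. Since the final cost $\mathbb V(\xi(T))=\frac1N\sum_i\xi_i(T)^2$ is invariant under permutations of its arguments, we will get $\mathbb V(\tilde\xi(T))=\mathbb V(\xi(T))$, so $\tilde\alpha$ is again optimal, and by construction $\tilde\xi$ satisfies \eqref{orderxi}. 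Two preliminary remarks: because $\alpha$ is measurable and bounded and $\xi$ stays bounded on $[0,T]$, the function $\xi$ is Lipschitz, hence so is $\tilde\xi$ (the decreasing rearrangement $\mathbb R^N\to\mathbb R^N$ is $1$-Lipschitz); and $\bar{\tilde\xi}(t)=\bar\xi(t)>0$ for all $t$ by Proposition \ref{prop_positive}, while $\tilde\xi(0)=\xi(0)$ by Hypothesis \ref{hyp_main}.

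The key step is to compute $\dot{\tilde\xi}$. For $k\in\{1,\dots,N\}$ let $\Sigma_k(t)$ be the sum of the $k$ largest components of $\xi(t)$, so that $\tilde\xi_k=\Sigma_k-\Sigma_{k-1}$ with $\Sigma_0\equiv0$. Each $\Sigma_k=\max_{|A|=k}\sum_{i\in A}\xi_i$ is a maximum of finitely many linear functionals, hence Lipschitz, hence differentiable almost everywhere; and by Danskin's theorem, at any time where $\xi$ is differentiable the one-sided derivatives of $t\mapsto\Sigma_k(t)$ equal the maximum, resp.\ minimum, of $\sum_{i\in A}\dot\xi_i(t)$ over the maximizing index sets $A$. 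Therefore, for a.e.\ $t$ all maximizing index sets give the same value $\dot\Sigma_k(t)=\sum_{i\in A}\dot\xi_i(t)$. Picking a permutation $\sigma_t$ with $\xi_{\sigma_t(1)}(t)\ge\cdots\ge\xi_{\sigma_t(N)}(t)$, the nested sets $A_k=\{\sigma_t(1),\dots,\sigma_t(k)\}$ are maximizing, so for a.e.\ $t$,
\[
\dot{\tilde\xi}_k(t)=\dot\Sigma_k(t)-\dot\Sigma_{k-1}(t)=\dot\xi_{\sigma_t(k)}(t)=-\xi_{\sigma_t(k)}(t)+(1-\alpha_{\sigma_t(k)}(t))\,\bar\xi(t)=-\tilde\xi_k(t)+(1-\tilde\alpha_k(t))\,\bar\xi(t),
\]
where $\tilde\alpha_k(t):=\alpha_{\sigma_t(k)}(t)$. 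Since $\sigma_t$ is a permutation, $\tilde\alpha(t)\in[0,1]^N$ and $\sum_k\tilde\alpha_k(t)=\sum_i\alpha_i(t)\le M$ for a.e.\ $t$; for measurability one can either use a measurable selection of the sorting permutation (resolving ties by index) or simply set $\tilde\alpha_i(t):=1-(\dot{\tilde\xi}_i(t)+\tilde\xi_i(t))/\bar\xi(t)$, which agrees with the above a.e.\ and is measurable because $\tilde\xi$ is Lipschitz and $\bar\xi$ is continuous and positive. Hence $\tilde\alpha\in\mathcal U_M$, its trajectory is exactly $\tilde\xi$ (both solve \eqref{scalar} with datum $\xi(0)$, which has a unique Carathéodory solution), it attains the optimal value of $\mathbb V(\cdot(T))$, and $\tilde\xi_1(t)\ge\cdots\ge\tilde\xi_N(t)$ for all $t$ — the assertion.

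I expect the main obstacle to be the measure-theoretic bookkeeping at times where several components of $\xi$ coincide: there the sorting permutation is neither unique nor continuous, and $\dot{\tilde\xi}$ need not be a rearrangement of $\dot\xi$ at every such instant, so a naive "exchange the controls of two agents when they cross" argument is delicate to make rigorous when the controls are merely measurable. The Danskin-type argument above is meant to sidestep this by treating all $N$ agents simultaneously: it shows that the bad instants form a null set and that on the complement any sorting permutation does the job. (A complementary sanity check: on any positive-measure set where $\xi_i\equiv\xi_j$ one automatically has $\dot\xi_i=\dot\xi_j$, hence $\alpha_i=\alpha_j$ there since $\bar\xi>0$, so genuine order violations can only be threatened on a null set anyway.)
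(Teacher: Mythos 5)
Your proof is correct, but it proceeds quite differently from the paper's. The paper argues by contradiction on the supremum $\tau$ of times up to which some cost-equivalent control keeps the order: if $\tau<T$, two components cross at $\tau$, and swapping the controls $\beta_i\leftrightarrow\beta_j$ of the offending pair on $[\tau,T]$ swaps the trajectories $\xi_i^\beta\leftrightarrow\xi_j^\beta$ (the dynamics \eqref{scalar} being symmetric under relabeling), restoring the order past $\tau$ without changing $\mathbb{V}(T)$ — a local, pairwise "exchange at crossing" argument iterated over all violating pairs. You instead sort the whole trajectory globally: you take the decreasing rearrangement $\tilde\xi$ of an optimal trajectory, show it is Lipschitz, and use the partial sums $\Sigma_k=\max_{|A|=k}\sum_{i\in A}\xi_i$ together with the elementary Danskin-type fact (at a point of common differentiability, the max of finitely many functions has the derivative of any active one) to verify that $\tilde\xi$ solves \eqref{scalar} a.e.\ with the pointwise-permuted control $\tilde\alpha$, which is admissible since $[0,1]^N\cap\{\sum_i\alpha_i\le M\}$ and the final cost are permutation-invariant. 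Your route is more work up front but is the more watertight of the two: it handles ties, tangential touchings, and infinitely many crossings uniformly (issues the paper's choice of $\delta$ and its "proceeding likewise for every pair" step gloss over), and it produces the reordered control explicitly. The only loose ends are cosmetic: $\tilde\alpha$ defined via $1-(\dot{\tilde\xi}_i+\tilde\xi_i)/\bar\xi$ exists only a.e., so it should be assigned arbitrary admissible values on the exceptional null set to meet the "for all $t$" constraint in \eqref{Um}; and one should note $\bar{\tilde\xi}=\bar\xi$ so that the ODE for $\tilde\xi$ is self-consistent (you do note this). Both proofs ultimately rest on the same two invariances — of the dynamics and of the cost under permutations — so the conclusion is the same.
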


\begin{proof}
Consider an optimal control strategy $\alpha\in \mathcal{U}_M$. \\
Define $\tau=\sup\{t\in [0,T]; \; \exists \beta\in \mathcal{U}_M \text{ s.t. } \mathbb{V}_{\beta}(T)=\mathbb{V}_\alpha(T) \text{ and } \xi^\beta \text{ satisfies } (\ref{orderxi}) \text{ on } [0,t]\}$, where $\mathbb{V}_\beta$ and $\xi^\beta$ denote respectively the migration functional and the dynamics driven by the control $\beta$.
Let us prove by contradiction that $\tau=T$. Suppose that $\tau<T$.
Then there exist $i,j \in \{1,...,N\}$ with $i<j$ such that $\xi_i^\beta(\tau)=\xi^\beta_j(\tau)$ and $\xi^\beta_j(t)>\xi^\beta_i(t)$ on $]\tau, \tau+\delta]$ for some $\delta>0$.
Design a control strategy $\tilde{\beta}$ such that on $[\tau,T], \; \tilde{\beta}_i=\beta_j, \;  \tilde{\beta}_j=\beta_i $ and for every $ k\in \{1,...,N\}\setminus\{i,j\}, \; \tilde{\beta}_k=\beta_k$. 
Then for all $ t\in [\tau, T], \; \xi^{\tilde{\beta}}_i(t)=\xi^\beta_j(t), \;$
$ \text{ and } \xi^{\tilde{\beta}}_j(t)=\xi^\beta_i(t)$. 
So for all  $t\in[\tau, \tau+\delta], \; \xi^{\tilde{\beta}}_i(t)\geq\xi^{\tilde{\beta}}_j(t) \text{ and } \mathbb{V}^{\tilde{\beta}}(T)=\mathbb{V}^\beta(T)$. 
Proceeding likewise for every pair of indices $(m,n)$ satisfying $m<n$ and $\xi^\beta_m(t)<\xi^\beta_n(t)$ on $]\tau, \tau+\delta]$ we are able to design a control strategy $\tilde{\beta}$ satisfying (\ref{orderxi}) on $[0,\tau+\delta]$ and $\mathbb{V}^{\tilde{\beta}}(T)=\mathbb{V}^\alpha(T)$, which contradicts the definition of $\tau$.
In conclusion, $\tau=T$, i.e. for all $ t\in [0,T]$, for every $i, j\in \{1,...,N\}$, if $i<j$ then $\xi_i(t)\geq\xi_j(t)$.
\end{proof}
Hence, from here onward we shall assume that the variables $\xi_i$ are ordered at all time.

\begin{hyp} \label{hyp_orderxi}
If $i<j$, then $\xi_i(t)\geq\xi_j(t)$ for all $t\in [0,T]$.
\end{hyp}

\noindent From Hyp.\ref{hyp_orderxi} and the transversality condition (\ref{lambda_final}), we know that the covectors are ordered at final time, i.e. $\lambda_1(T)\geq ... \geq \lambda_N(T)$. From Prop. \ref{prop_equality}, we can generalize this for any time $t$:
\begin{equation}\label{Condopt}
\lambda_1(t)\geq ... \geq \lambda_N(t) \quad \text{ for all } t \in [0,T].
\end{equation}
The Pontryagin Maximum Principle allows us to state the following:

\begin{prop} \label{prop_optstrat}
The optimal strategy requires controlling the agents with the biggest positive covectors. Let $\alpha\in\mathcal{U}_M$ be an optimal strategy and $\lambda_i, \; i\in\{1,...,N\}$ the corresponding covectors. Define: 
\begin{equation} \label{setI}
I_\lambda(t):=\Big \{ i\in \{1,...,N\} \; \Big | \; \lambda_i(t)\geq 0 \Big\} \quad \text{  and  }\quad  I_\lambda^+(t):=\Big \{i\in \{1,...,N\} \; \Big | \; \lambda_i(t)>0 \Big \}.
\end{equation}
If the set $I_\lambda(t)$ is empty, then there is no control on any agent: $\alpha_i(t)= 0$ for every $i$. \\
If the set $I_\lambda^+(t)$ is not empty, then there exists $i\in I_\lambda^+(t)$ such that $\alpha_i(t)> 0$. Furthermore, $\sum_j \alpha_j \geq \min(|I_\lambda^+(t)|,M)$.
\end{prop}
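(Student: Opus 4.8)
The plan is to extract everything from the Pontryagin minimization condition \eqref{MinH}. First I would fix a time $t$ at which \eqref{MinH} holds (which is the case for a.e.\ $t\in[0,T]$) and use the second form of the Hamiltonian in \eqref{Ham}: the term $\sum_i\lambda_i(-\xi_i+\bar\xi)$ is independent of the control, so minimizing $H$ over the admissible polytope $\Delta_M:=\{\beta\in[0,1]^N:\sum_i\beta_i\le M\}$ is the same as maximizing $\bar\xi(t)\sum_i\beta_i\lambda_i(t)$, and since $\bar\xi(t)>0$ by Proposition~\ref{prop_positive}, the same as maximizing the linear functional $\beta\mapsto\sum_i\beta_i\lambda_i(t)$ over $\Delta_M$. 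Hence the whole proposition reduces to elementary properties of the maximizers of this linear program, with the covectors ordered by \eqref{Condopt} so that $I_\lambda^+(t)=\{1,\dots,p\}$ where $p:=|I_\lambda^+(t)|$.

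For the first assertion: if $I_\lambda(t)=\emptyset$ then $\lambda_i(t)<0$ for all $i$, so $\sum_i\beta_i\lambda_i(t)\le 0$ with equality only at $\beta=0$; thus $\beta=0$ is the unique maximizer and the optimal $\alpha$ must satisfy $\alpha_i(t)=0$ for every $i$.

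For the second assertion, assume $p\ge1$ and set $\lambda_{\max}(t)=\lambda_1(t)>0$. The optimal value is strictly positive, since the test control putting mass $\min(1,M)$ on index $1$ already gives $\min(1,M)\,\lambda_{\max}(t)>0$. Because $\lambda_i(t)\le 0$ for $i\notin I_\lambda^+(t)$ we have $\sum_i\alpha_i(t)\lambda_i(t)\le\sum_{i\in I_\lambda^+(t)}\alpha_i(t)\lambda_i(t)$; were $\alpha_i(t)=0$ for all $i\in I_\lambda^+(t)$ the left side would be $\le 0$, contradicting optimality. Hence some $i\in I_\lambda^+(t)$ has $\alpha_i(t)>0$. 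For the lower bound $\sum_j\alpha_j(t)\ge\min(p,M)$ I would split on whether $p\le M$ or $p>M$. If $p\le M$, the optimal value equals $\sum_{i\in I_\lambda^+(t)}\lambda_i(t)$ (attained by $\beta\equiv1$ on $I_\lambda^+(t)$), and the chain $\sum_i\alpha_i\lambda_i\le\sum_{i\in I_\lambda^+}\alpha_i\lambda_i\le\sum_{i\in I_\lambda^+}\lambda_i$ forces $\alpha_i(t)=1$ for every $i\in I_\lambda^+(t)$, so $\sum_j\alpha_j(t)\ge p$. If $p>M$, then $\lambda_{(\lceil M\rceil)}(t)>0$ (since $\lceil M\rceil\le p$), and since every one of the $\lceil M\rceil$ largest coordinates carries a strictly positive covector, any admissible control with $\sum_i\alpha_i(t)<M$ could be strictly improved by shifting the leftover budget onto such a coordinate; hence the optimal $\alpha$ uses $\sum_j\alpha_j(t)=M$. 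Combining the two cases gives $\sum_j\alpha_j(t)\ge\min(p,M)$.

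The main obstacle, such as it is, lies in this last case when there are ties among the covectors at the threshold ($\lambda_{(\lceil M\rceil-1)}=\lambda_{(\lceil M\rceil)}$ or $\lambda_{(\lceil M\rceil)}=\lambda_{(\lceil M\rceil+1)}$): the maximizer is then far from unique --- exactly the freedom recorded in Proposition~\ref{prop_equality} --- so one cannot pin down the individual $\alpha_i$'s, only the total mass $\sum_i\alpha_i(t)$. The clean way to handle this is to phrase the improvement argument purely in terms of the scalar quantity $\sum_i\alpha_i(t)\lambda_i(t)$ and the budget $M$, observing that as long as $\lambda_{(\lceil M\rceil)}(t)>0$ no maximizer can afford to leave budget unused; everything else is a routine inspection of the vertices of the polytope $\Delta_M$, together with the a.e.-in-$t$ caveat inherent to Pontryagin's principle.
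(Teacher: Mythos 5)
Your proposal is correct and follows essentially the same route as the paper: both reduce the claim, via the PMP minimization condition \eqref{MinH} and the positivity of $\bar{\xi}$ from Proposition \ref{prop_positive}, to maximizing the linear functional $\alpha\mapsto\sum_i\alpha_i\lambda_i$ over the admissible control set. The paper states this in three lines while you carry out the linear-programming case analysis (including the tie and a.e.-in-$t$ caveats) explicitly, but there is no difference in substance.
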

\begin{proof}
According to Pontryagin's maximum principle (\ref{MinH}), if the control $\alpha$ is optimal, then it minimizes the Hamiltonian $H$ (\ref{Ham}) for almost all $t\in[0,T]$. The only controllable part of $H$ is $\tilde{H}=-\bar{\xi}\sum_i\alpha_i\lambda_i$. Minimizing $H$ requires controlling the largest positive $\lambda_i$ with the maximum strength allowed, while setting $\alpha_i=0$ if $\lambda_i<0$. If $\lambda_i=0$, Pontryagin's maximum principle gives no information on $\alpha_i$.
\end{proof} 

\noindent This leads to a trichotomy of cases. 
\begin{itemize} \itemsep0em
\item The biggest positive $\lambda_i$'s are always controlled with maximum control: $\sum_{i\in I_\lambda^+} \alpha_i=\min (|I_\lambda^+|, M )$. 
\item If for $i,j$, $\lambda_i$ and $\lambda_j$ coincide (at a certain time, which implies at all time) then $\alpha_i$ and $\alpha_j$ are under-determined. The PMP only requests that $\alpha_i + \alpha_j = c$ where $c$ is given by the strength of the control to be used on the two agents. 
\item The negative $\lambda_i$'s are never controlled: if $\lambda_i<0$, then $\alpha_i = 0$.
\end{itemize}

\begin{remark}
The existence of an optimal control for the problem described above is ensured by the convexity of the sets $F(t,\xi)=\{\left(\xi_i + (1-\alpha_i)\bar{\xi}\right)_{i=1...N}, \; \alpha\in [0,1]^N, \; \sum_i\alpha_i\leq M \}$ (see \cite{BressPic}).
\end{remark}

\section{Final cost with two agents}\label{Sec:2agents}

For a clearer understanding of the mechanisms taking place, we consider the simple case of two agents in $\mathbb{R}^d$. We consider the sets of controls $\mathcal{U}_M$, where $0<M\leq 2$. 
Thus, system (\ref{scalar}) becomes:

\begin{equation}
\begin{cases}
\dot{\xi}_1 = -\xi_1 + (1-\alpha_1) \; \bar{\xi} \\
\dot{\xi}_2 = -\xi_2 + (1-\alpha_2) \; \bar{\xi}.
\end{cases}
\label{scalar2}
\end{equation}
Computing the difference of the two projected variables will also prove useful:
\begin{equation}\label{diffxi1}
\dot{\xi}_1-\dot{\xi}_2= -(\xi_1-\xi_2)-(\alpha_1-\alpha_2)\bar{\xi}.
\end{equation}

\noindent Three different situations may arise, depending on the value of the constraint on the control. Indeed, two constraints are set: $\alpha_1+\alpha_2 \leq M$, and $0\leq\alpha_i \leq 1$ for $i=1,2$. We differentiate the cases (a) $0<M\leq 1$, (b) $1<M<2$ and (c) $M=2$.

\subsection{Pontryagin's Maximum Principal} \label{Sec:PMP}

Notice that the migration functional can be written as:
\begin{equation}
\mathbb{V} = \frac12 (\xi_1^2 + \xi_2^2) = \frac14 \left( (\xi_1+\xi_2)^2 + (\xi_1-\xi_2)^2 \right)=\bar{\xi}^2+\left(\frac{\xi_1-\xi_2}{2}\right)^2, 
\label{functional_V}
\end{equation}
once again emphasizing the necessary trade-off between two terms: the mean velocity $\bar{\xi}$ and the distance between the agents $|\xi_1-\xi_2 |$.
Computing the Hamiltonian of the system gives: 

\begin{equation}
H(t,\xi,\lambda,\alpha) = - \bar{\xi} \;(\alpha_1\lambda_1+\alpha_2\lambda_2) + \frac{\xi_2-\xi_1}{2} (\lambda_1-\lambda_2).
\end{equation}

In line with Hyp. \ref{hyp_main}, two cases are possible: $\xi_1(0)=\xi_2(0)$ or $\xi_1(0)>\xi_2(0)$. The following proposition deals with the first case. 

\begin{prop}\label{prop_control2equal}
If $\xi_1(0)=\xi_2(0)$, then a control strategy $\alpha$ is optimal if and only if it satisfies $\alpha_1+\alpha_2\equiv M$ and $\xi_1(T)=\xi_2(T)$.
\end{prop}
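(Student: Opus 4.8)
The plan is to exploit the decomposition
$\mathbb{V}(T)=\bar{\xi}(T)^2+\big(\tfrac{\xi_1(T)-\xi_2(T)}{2}\big)^2$ from \eqref{functional_V} and to show that, when $\xi_1(0)=\xi_2(0)$, the two summands can be minimized \emph{simultaneously} and independently; an optimal control is then exactly one that minimizes each of them.

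First I would record two lower bounds valid for every $\alpha\in\mathcal{U}_M$. For the first term, since $N=2$ the mean obeys $\dot{\bar{\xi}}=-\tfrac12(\alpha_1+\alpha_2)\bar{\xi}\geq-\tfrac{M}{2}\bar{\xi}$; because $\bar{\xi}(0)>0$ (Hyp.~\ref{hyp_first} and Prop.~\ref{prop_positive}), integrating gives $\bar{\xi}(T)=\bar{\xi}(0)\exp\!\big(-\tfrac12\int_0^T(\alpha_1+\alpha_2)\,ds\big)\geq \bar{\xi}(0)e^{-MT/2}>0$, hence $\bar{\xi}(T)^2\geq \bar{\xi}(0)^2e^{-MT}$, with equality if and only if $\int_0^T(\alpha_1+\alpha_2)\,ds=MT$. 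The second bound is trivial: $\big(\tfrac{\xi_1(T)-\xi_2(T)}{2}\big)^2\geq 0$, with equality iff $\xi_1(T)=\xi_2(T)$.

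Next I would exhibit a control attaining both bounds at once, namely $\alpha_1\equiv\alpha_2\equiv M/2$. This lies in $\mathcal{U}_M$ since $M\leq 2$ forces $M/2\leq 1$, and $\alpha_1+\alpha_2\equiv M$. By \eqref{diffxi1}, when $\alpha_1=\alpha_2$ the difference satisfies $\tfrac{d}{dt}(\xi_1-\xi_2)=-(\xi_1-\xi_2)$, so $\xi_1(0)=\xi_2(0)$ propagates to $\xi_1(t)=\xi_2(t)$ for all $t$, in particular $\xi_1(T)=\xi_2(T)$. Therefore $\min_{\mathcal{U}_M}\mathbb{V}(T)=\bar{\xi}(0)^2 e^{-MT}$, and a control $\alpha$ is optimal iff $\mathbb{V}(T)=\bar{\xi}(0)^2e^{-MT}$; since $\mathbb{V}(T)$ is the sum of the two terms, each bounded below by $\bar{\xi}(0)^2e^{-MT}$ and $0$ respectively, the sum equals $\bar{\xi}(0)^2e^{-MT}$ iff both terms attain their minimum, i.e.\ iff $\int_0^T(\alpha_1+\alpha_2)\,ds=MT$ and $\xi_1(T)=\xi_2(T)$. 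Finally, under the pointwise constraint $\alpha_1+\alpha_2\leq M$, the integral condition $\int_0^T(\alpha_1+\alpha_2)\,ds=MT$ is equivalent to $\alpha_1+\alpha_2=M$ for a.e.\ $t$, which gives the stated characterization (with ``$\equiv M$'' understood in the a.e.\ sense appropriate for measurable controls).

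The argument is short, and its only delicate point — the expected main obstacle — is precisely this last equality case: one must observe that $\int_0^T\big(M-(\alpha_1+\alpha_2)\big)\,ds=0$ with a nonnegative integrand forces $\alpha_1+\alpha_2=M$ almost everywhere, so that the ``only if'' direction yields $\alpha_1+\alpha_2\equiv M$ a.e.\ and not merely in an averaged sense. Everything else reduces to explicitly integrating the scalar linear ODEs for $\bar{\xi}$ and for $\xi_1-\xi_2$.
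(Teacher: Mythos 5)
Your proposal is correct and follows essentially the same route as the paper: it uses the decomposition $\mathbb{V}(T)=\bar{\xi}(T)^2+\bigl(\tfrac{\xi_1(T)-\xi_2(T)}{2}\bigr)^2$, exhibits $\alpha_1\equiv\alpha_2\equiv M/2$ as a control minimizing both terms simultaneously, and characterizes optimality as attaining both minima. You merely spell out more explicitly the equality case $\int_0^T\bigl(M-(\alpha_1+\alpha_2)\bigr)\,ds=0\Rightarrow\alpha_1+\alpha_2=M$ a.e., which the paper leaves implicit.
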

\begin{proof}
Consider the control given by $\tilde\alpha_1\equiv\tilde\alpha_2\equiv \frac{M}{2}$. It achieves $[\frac12(\xi_1(T)-\xi_2(T))]^2=0$ and ensures the maximal decrease of $\bar\xi^2$, thus is optimal for the minimization of $\mathbb{V}(T)$ \eqref{functional_V}.
Still from \eqref{functional_V}, a control $\alpha$ is optimal if and only if it achieves $[\frac12(\xi_1(T)-\xi_2(T))]^2=0$, which is equivalent to $\xi_1(T)=\xi_2(T)$, and ensures the maximal decrease of $\bar\xi^2$, which is equivalent to $\alpha_1+\alpha_2\equiv M$.
\end{proof}

Hence, the case $\xi_1(0)=\xi_2(0)$ is fully understood. In the following, we will deal with more complex cases by assuming:

\begin{hyp}
 $\xi_1(0)>\xi_2(0)$. 
\label{hyp_signs}
\end{hyp}

Before studying each case in detail, we give general considerations on the relation between the control $\alpha$ and $\lambda$:
\begin{itemize}
\item[(a)] If $M \leq 1$, minimizing $H$ (i.e. maximizing $\langle \lambda,\alpha \rangle$) gives (see Fig.\ref{fig:M1}): 
$(\alpha_1,\alpha_2) = (M,0)$ if $0<\lambda_2 < \lambda_1$; 
$(\alpha_1,\alpha_2) = (M/2,M/2)$ if $0<\lambda_2=\lambda_1$; 
$(\alpha_1,\alpha_2) = (M,0)$ if $\lambda_2 < 0 < \lambda_1$; 
$(\alpha_1,\alpha_2) = (0,0)$ if $\lambda_2 < 0$ and $\lambda_1 < 0$.
\item[(b)] If $1<  M< 2$, minimizing $H$ gives (see Fig.\ref{fig:M12}): 
$(\alpha_1,\alpha_2) = (1,M-1)$ if $0<\lambda_2 < \lambda_1$;
$(\alpha_1,\alpha_2) = (M/2,M/2)$ if $0<\lambda_2=\lambda_1$;
$(\alpha_1,\alpha_2) = (1,0)$ if $\lambda_2 < 0 < \lambda_1$;
$(\alpha_1,\alpha_2) = (0,0)$ if $\lambda_2 < 0$ and $\lambda_1 < 0$.
\item[(c)] If $M \geq 2$, minimizing $H$ gives (see Fig.\ref{fig:M2}): 
$(\alpha_1,\alpha_2) = (1,1)$ if $0<\lambda_2\leq \lambda_1$;
$(\alpha_1,\alpha_2) = (1,0)$ if $\lambda_2 < 0 < \lambda_1$;
$(\alpha_1,\alpha_2) = (0,0)$ if $\lambda_2 < 0$ and $\lambda_1 < 0$.
\end{itemize}

\noindent Notice that in all three cases,  if $\lambda_1=\lambda_2$, then the Pontryagin maximum principle does not give sufficient
information since any combination of $\alpha_1$ and $\alpha_2$ such that $\alpha_1 + \alpha_2 = M$ minimizes the scalar product $-\langle \lambda,\alpha \rangle $ (see Figure \ref{fig:M}).

\begin{figure}[h]
        \centering
        \begin{subfigure}[b]{0.3\textwidth}
                \includegraphics[trim=0cm 0.5cm 0cm 0cm, clip=true,scale=0.5]{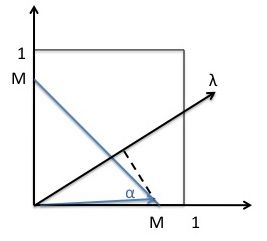}
                \caption{Case $M\leq 1$}
                \label{fig:M1}
        \end{subfigure}%
        ~ 
        \begin{subfigure}[b]{0.3\textwidth}
                \includegraphics[scale=0.4]{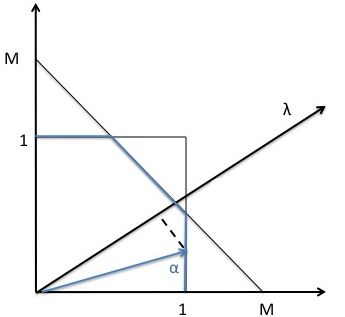}
                \caption{Case $1<M<2$}
                \label{fig:M12}
        \end{subfigure}
        ~ 
        \begin{subfigure}[b]{0.3\textwidth}
                \includegraphics[scale=0.3]{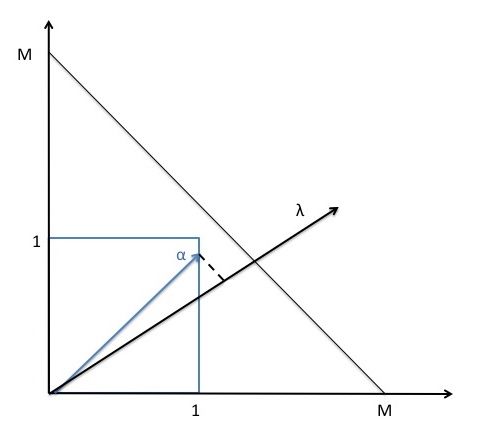}
                \caption{Case $M\geq 2$}
                \label{fig:M2}
        \end{subfigure}
        \caption{Minimizing $-\langle \lambda,\alpha \rangle $}\label{fig:M}
\end{figure}

\noindent The dynamics for $\lambda$ are given by $\dot{\lambda} = - \nabla H = \left(
    \begin{array}{c}
      \frac{1+\alpha_1}{2} \lambda_1 - \frac{1-\alpha_2}{2} \lambda_2 \\
       \frac{1+\alpha_2}{2} \lambda_2 - \frac{1-\alpha_1}{2} \lambda_1
    \end{array}
  \right) $,
which allows us to compute the evolution of the difference $\lambda_1-\lambda_2$: 
\begin{equation}
\frac{d}{dt}(\lambda_1-\lambda_2) = \lambda_1-\lambda_2.
\end{equation}
The transversality conditions give: $\lambda(T)=\nabla \mathbb{V}(T) = \left(\xi_1(T), \xi_2(T)\right)^T $.
Hence, if the final configuration is such that $\xi_1(T) \neq \xi_2(T)$, i.e. $\lambda_1(T) \neq
\lambda_2(T)$, the difference  $\lambda_1-\lambda_2$ increases with
time. On the other hand, if  $\lambda_1(T) = \lambda_2(T)$, then
$\forall t \leq T, \; \lambda_1(t) = \lambda_2(t)$.
If the dynamics allow us to drive $\xi_1$ and $\xi_2$ together before time
$T$, then $\lambda_1(t) = \lambda_2(t)$ for all $t$, and the Pontryagin maximum principle does not give sufficient information, as seen above.

\subsection{Global Strategy}

According to equation (\ref{functional_V}), the functional $\mathbb{V}$ can be written as:
\begin{equation}
\mathbb{V} = \bar{\xi}^2+  \frac{(\xi_1-\xi_2)^2}{4}.
\label{functional_V2}
\end{equation} 

\noindent Minimizing $\mathbb{V}$ requires minimizing $\bar{\xi}$ and $(\xi_1-\xi_2)^2$ simultaneously. The evolution of $\bar{\xi}$ is given by:
\begin{equation}
\dot{\bar{\xi}} = - \frac12  (\alpha_1+\alpha_2) \; \bar{\xi},
\label{xibar}
\end{equation}
while that of $(\xi_1-\xi_2)^2$ is:
\begin{equation}
\frac{d}{dt}\left( (\xi_1-\xi_2)^2 \right) = -2 (\xi_1-\xi_2)^2 -2 (\xi_1-\xi_2)\bar{\xi}(\alpha_1-\alpha_2).
\label{diffxi}
\end{equation}
Thus, minimizing $\bar{\xi}^2$ (both instantaneously and globally) requires using full control, i.e. setting $\alpha_1+\alpha_2=M$. 
On the other hand, the strategy to minimize $(\xi_1-\xi_2)^2$ is less clear. It would require both maximizing $\bar{\xi}$ and maximizing the difference $\alpha_1-\alpha_2$ (assuming that $\xi_1-\xi_2\geq 0$), and these conditions might not be compatible.

\subsection{Case $M=1$}


\begin{theorem}\ \\
\label{Th_M1}
Let $T>0$ and let $M=1$. Furthermore, let $\alpha=(\alpha_1,\alpha_2)\in\mathcal{U}_1$ (see \eqref{Um}) be an optimal control and $\xi$ be the corresponding trajectory of system \eqref{scalar2}. 
Define $t_0=2\ln(\xi_1(0) / \bar\xi(0))$. Then
\begin{itemize}
\item[(i)] $T\geq t_0$ if and only if $\xi_1(T)=\xi_2(T)$. In such a case, the control satisfies: $\alpha_1+\alpha_2\equiv 1$ (so $\bar{\xi}(t)=\bar{\xi}(0)e^{-t/2}$). For instance, the strategy $(\alpha_1,\alpha_2)(t)=(1,0)$ for all $t \in [0,t_0[$ and $(\alpha_1,\alpha_2)(t)=(1/2,1/2)$ for all $t \in [t_0,T]$ is optimal. 
\item[(ii)] If $T< t_0$, then $\alpha(t)=(0,0)$ for all  $t\in [0,t^*[$ and $\alpha(t)=(1,0)$ for all $t\in [t^*, T]$, where $t^*=2\ln(\bar X)$ and $\bar X\in [1,e^{T/2}[$ is defined as follows:
\end{itemize}
\begin{equation*}
\bar X=\arg\min_{X\in[1,e^{T/2}]}\left[ \left( \xi_1(0)+\bar{\xi}(0)(X^2-1) \right) ^2+\left( \xi_2(0)+\bar\xi(0)(X^2-1)+2\bar\xi(0)X(e^{T/2}-X)\right)^2 \right].
\end{equation*}
\end{theorem}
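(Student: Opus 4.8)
The plan is to work throughout with the splitting $\mathbb{V}(T)=\bar\xi(T)^2+\tfrac14(\xi_1(T)-\xi_2(T))^2$ from \eqref{functional_V2}, together with the observation that $\dot{\bar\xi}=-\tfrac12(\alpha_1+\alpha_2)\bar\xi$ and $\alpha_1+\alpha_2\le 1$ force $\bar\xi(T)\ge\bar\xi(0)e^{-T/2}$, with equality iff $\alpha_1+\alpha_2\equiv1$ a.e.; note also $\xi_1(0)>\bar\xi(0)>0$ by Hyp.~\ref{hyp_first} and Hyp.~\ref{hyp_signs}, so $t_0>0$ and $\tfrac{\xi_1(0)-\xi_2(0)}{2\bar\xi(0)}=e^{t_0/2}-1$. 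For the forward implication in (i), I would first check directly that the displayed strategy works: $\alpha_1+\alpha_2\equiv1$ gives $\bar\xi(t)=\bar\xi(0)e^{-t/2}$, and integrating \eqref{diffxi1} over $[0,t_0[$ with $\alpha_1-\alpha_2\equiv1$ yields $(\xi_1-\xi_2)(t)=e^{-t}\big((\xi_1-\xi_2)(0)+2\bar\xi(0)(1-e^{t/2})\big)$, which is nonnegative on $[0,t_0]$ and vanishes exactly at $t_0$, after which $\alpha_1-\alpha_2\equiv0$ keeps it at $0$. Hence this strategy attains $\mathbb{V}(T)=\bar\xi(0)^2e^{-T}$, the absolute minimum of $\mathbb{V}(T)$ over $\mathcal U_1$; so it is optimal, and any optimal control attains $\bar\xi(0)^2e^{-T}$, which forces both $\bar\xi(T)^2=\bar\xi(0)^2e^{-T}$ (i.e.\ $\alpha_1+\alpha_2\equiv1$) and $(\xi_1(T)-\xi_2(T))^2=0$.

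For the reverse implication in (i), suppose the optimal trajectory has $\xi_1(T)=\xi_2(T)$. By \eqref{lambda_final}, $\lambda_1(T)=\xi_1(T)=\xi_2(T)=\lambda_2(T)$, so $\lambda_1\equiv\lambda_2=:\lambda$ by Proposition~\ref{prop_equality}; since $\lambda(T)=\bar\xi(T)>0$ (Proposition~\ref{prop_positive}) and $\dot\lambda=\tfrac12(\alpha_1+\alpha_2)\lambda$, we get $\lambda>0$ on $[0,T]$. With $\lambda_1=\lambda_2$ the Hamiltonian \eqref{Ham} collapses to $H=-\bar\xi\lambda(\alpha_1+\alpha_2)$, so \eqref{MinH} forces $\alpha_1+\alpha_2\equiv1$ and hence $\bar\xi(t)=\bar\xi(0)e^{-t/2}$. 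Integrating \eqref{diffxi1} again, $\xi_1(T)=\xi_2(T)$ requires $\int_0^T e^{s/2}(\alpha_1-\alpha_2)(s)\,ds=\tfrac{\xi_1(0)-\xi_2(0)}{\bar\xi(0)}$; but $\alpha_1-\alpha_2\le\alpha_1+\alpha_2=1$ bounds the left-hand side by $2(e^{T/2}-1)$, so $e^{t_0/2}-1\le e^{T/2}-1$, i.e.\ $T\ge t_0$.

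For (ii), assume $T<t_0$; then by (i) the optimal trajectory has $\xi_1(T)\ne\xi_2(T)$, hence $\xi_1(T)>\xi_2(T)$ by Hyp.~\ref{hyp_orderxi}, so $\lambda_1(T)>\lambda_2(T)$, and since $\tfrac{d}{dt}(\lambda_1-\lambda_2)=\lambda_1-\lambda_2$ we get $\lambda_1-\lambda_2=(\xi_1(T)-\xi_2(T))e^{t-T}>0$ on all of $[0,T]$. The case $M=1$ of the Pontryagin analysis (Section~\ref{Sec:PMP}) then gives $\alpha=(1,0)$ where $\lambda_1>0$ and $\alpha=(0,0)$ where $\lambda_1<0$, while on the set $\{\lambda_1=0\}$ (where $\lambda_2<0$) minimization still forces $\alpha_2=0$. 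Computing $\dot\lambda_1$ in each regime — $\tfrac12(\lambda_1-\lambda_2)$ if $\alpha=(0,0)$, $\tfrac12\lambda_1+\tfrac12(\lambda_1-\lambda_2)$ if $\alpha=(1,0)$, and $-\tfrac12\lambda_2$ if $\lambda_1=0$ — and using $\lambda_1-\lambda_2>0$ shows $\lambda_1$ is strictly increasing, hence changes sign at most once, from $-$ to $+$. Since $\lambda_1(T)=\xi_1(T)\ge\xi_1(0)e^{-T}>0$, there is $t^*\in[0,T]$ with $\alpha=(0,0)$ on $[0,t^*[$ and $\alpha=(1,0)$ on $]t^*,T]$, and $t^*<T$ because $\alpha\equiv(0,0)$ is not optimal (the pure strategy $(1,0)$ yields strictly smaller $\bar\xi(T)$ and, as $T<t_0$, strictly smaller $|\xi_1(T)-\xi_2(T)|>0$). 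Integrating \eqref{scalar2} through the switch and setting $X=e^{t^*/2}\in[1,e^{T/2}[$ gives $\xi_1(T)=e^{-T}(\xi_1(0)+\bar\xi(0)(X^2-1))$ and $\xi_2(T)=e^{-T}(\xi_2(0)+\bar\xi(0)(X^2-1)+2\bar\xi(0)X(e^{T/2}-X))$, so $\mathbb{V}(T)$ is $\tfrac{e^{-2T}}{2}$ times the bracket in the statement; optimality of $\alpha$ means $X=\bar X$ minimizes it over $[1,e^{T/2}]$, and the minimizer lies in $[1,e^{T/2}[$ since $X=e^{T/2}$ (no control at all) is not optimal.

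I expect the main obstacle to be the strict-monotonicity argument for $\lambda_1$ in part (ii): one must combine the global sign information $\lambda_1>\lambda_2$ with the regime-dependent formulas for $\dot\lambda_1$ and treat the set $\{\lambda_1=0\}$ carefully (it is null, and the adjoint equation still gives $\dot\lambda_1>0$ there), since this is precisely what rules out multiple switches and forces the ``inactivate first, then full control on agent $1$'' structure; the remaining steps are integration of linear scalar ODEs and the one-variable minimization.
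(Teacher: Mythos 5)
Your proof is correct and takes essentially the same route as the paper's: the decomposition \eqref{functional_V2} plus the explicit strategy to handle $T\geq t_0$, the transversality/Proposition~\ref{prop_equality} argument forcing $\lambda_1\equiv\lambda_2>0$ and hence $\alpha_1+\alpha_2\equiv1$ for the converse, and for $T<t_0$ the strict monotonicity of $\lambda_1$ yielding a single switch whose time is found by the same biquadratic minimization in $X=e^{t^*/2}$. The only (harmless) deviations are that you bypass the paper's intermediate statement ``there exists $t$ with $\xi_1(t)=\xi_2(t)$'' and rule out $\bar X=e^{T/2}$ by directly comparing the $(0,0)$ and $(1,0)$ strategies rather than evaluating the polynomial at the endpoints.
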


\begin{proof}
Let $\xi$ be an optimal trajectory achieved with optimal control $\alpha$.\\
To prove {\it(i)}, we shall show that the three statements $(a)$ $T\geq t_0$, 
$(b)$ there exists $t\in [0,T]$ such that $\xi_1(t)=\xi_2(t)$ and
$(c)$ $\xi_1(T)=\xi_2(T)$ are equivalent.\\
Suppose $(b)$
there exists $\tau\in [0,T]$ such that $\xi_1(\tau)=\xi_2(\tau)$. Then necessarily $\xi_1(T)=\xi_2(T)$. Indeed, suppose that $\xi_1(T)\neq \xi_2(T)$. Then any strategy $\tilde{\alpha}$ such that on $[0,\tau], \; \tilde{\alpha}=\alpha$ and on $]\tau, T], \; (\tilde{\alpha}_1,\tilde{\alpha}_2)=(\frac{\alpha_1+\alpha_2}{2},\frac{\alpha_1+\alpha_2}{2})$ achieves:  $\bar{\tilde{\xi}}(T)=\bar{\xi}(T)$ and $(\tilde{\xi}_1-\tilde{\xi}_2)^2(T)=0<(\xi_1-\xi_2)(T)$ (where $\tilde{\xi}$, $\tilde{V}$ denote the trajectory and cost corresponding to $\tilde{\alpha}$), so according to equation (\ref{functional_V2}), $\tilde{\mathbb{V}}(T)<\mathbb{V}(T)$ and control strategy $\alpha$ cannot be optimal. 
Hence, $\xi_1(T)=\xi_2(T)$.
\\Now suppose $(c)$ $\xi_1(T)=\xi_2(T)$. The transversality condition (\ref{lambda_final}) gives 
$\lambda_1(T)=\lambda_2(T)$ and from Proposition \ref{prop_equality} we get: $\lambda_1(t)=\lambda_2(t)$ for all $t\in [0,T]$. Then, $\dot{\bar{\lambda}}=\sum\alpha_i\lambda_i =(\sum\alpha_i)\bar{\lambda}$. Since $\bar{\xi}(T)>0$, the transversality condition (\ref{lambda_final}) gives: $\bar{\lambda}(T)>0$, and $\bar{\lambda}(t)=\lambda_1(t)=\lambda_2(t)>0$ for all $ t\in [0,T]$. Therefore, the set $I_\lambda$, see (\ref{setI}), is not empty, so according to Proposition \ref{prop_optstrat}, the optimal control strategy requires using maximal control strength: $\alpha_1+\alpha_2\equiv 1$. According to equation (\ref{xibar}), this suffices to fully determine $\bar{\xi}(t)=\bar{\xi}(0)\; e^{-t/2}$. 
Then $\xi_1(t)-\xi_2(t)=e^{-t}\left( (\xi_1-\xi_2)(0)-\bar{\xi}(0)\int_0^t(\alpha_1-\alpha_2)e^{s/2}ds\right)$, and 
$\xi_1(t)-\xi_2(t)=0$ if, and only if, $\int_0^t e^{s/2}(\alpha_1-\alpha_2)(s)ds = (\xi_1(0)-\xi_2(0)) / \bar{\xi}(0)$. Notice that $\min_{(\alpha_1,\alpha_2)\in\mathcal{U}_1}\{t \; |\; (\xi_1-\xi_2)(t)=0\}$ is obtained when $\alpha_1-\alpha_2$ is maximal, i.e. for $(\alpha_1,\alpha_2)\equiv(1,0)$. With this strategy, $\min_{(\alpha_1,\alpha_2)\in\mathcal{U}_1}\{t \; |\; (\xi_1-\xi_2)(t)=0\}:=t_0=2\ln(\xi_1(0) / \bar\xi(0))$. Hence, we must have: $T\geq t_0$. \\
Lastly, suppose $(a)$ $T\geq t_0$. Design a strategy $\tilde{\alpha}$ so that for all $t<t_0, \; (\tilde{\alpha}_1, \tilde{\alpha}_2)=(1,0)$ and for all $t \geq t_0, \; (\tilde{\alpha}_1, \tilde{\alpha}_2)=(1/2,1/2)$. This strategy is optimal since it maximizes the decrease of $\bar{\tilde{\xi}}$, see \eqref{xibar}, and achieves $(\tilde\xi_1-\tilde\xi_2)(T)=0$, see \eqref{diffxi}. Hence, our optimal strategy $\alpha$ must also satisfy: $\xi_1(T)=\xi_2(T)$ and $\alpha_1+\alpha_2\equiv 1$. This proves $(b)$. \\
We showed that $(a)$, $(b)$ and $(c)$ are equivalent. We thus proved the first part of the proposition: $T\geq t_0$ if and only if $\xi_1(T)=\xi_2(T)$. In this case, it also holds: $\alpha_1+\alpha_2\equiv 1$. \\
If on the other hand, {\it (ii)} $T<t_0$, then $\xi_1(t)>\xi_2(t)$ for all $t\in [0,T]$ (since $(b)$ implies $(a)$). According to condition (\ref{lambda_final}) and to Prop. \ref{prop_equality}, $\lambda_1(t)>\lambda_2(t)$ for all $t\in [0,T]$ and $\lambda_1(T)>0$. The evolution of $\lambda_1$ is given by: $\dot{\lambda}_1=\frac{1}{2}(\alpha_1\lambda_1+\alpha_2\lambda_2)+\lambda_1-\bar{\lambda}>0$ since $\lambda_1>\bar{\lambda}$. Hence, two cases must be distinguished: either $\lambda_1> 0$ at all time, so the set $I_\lambda^+$ is non-empty and full control will be used at all time, or there exists $ t^*\in]0,T[$ such that $\lambda_1< 0$ on $[0,t^*[$, $\lambda_1(t^*)=0$ and $\lambda_1> 0$ on $]t^*,T]$, in which case $\alpha=(0,0)$ on $[0,t^*[$ and $\alpha=(1,0)$ on $]t^*,T]$.
Knowing this, it is easy to express $\xi_1$, $\xi_2$ and $\mathbb{V}$ as functions of $t^*$:
\begin{equation}
\forall t\in [t^*,T], 
\begin{cases}
\xi_1(t)=e^{-t}(\xi_1(0)+\bar\xi(0)(e^{t^*}-1)) \\
\xi_2(t)=e^{-t}(\xi_2(0)+\bar\xi(0)(e^{t^*}-1)+2\bar\xi(0)e^{t^*/2}(e^{t/2}-e^{t^*/2}))\\
\mathbb{V}(t)=\xi_1^2(t)+\xi_2^2(t)
\end{cases}.
\end{equation}
Denoting $X=e^{t^*/2}$, $\mathbb{V}(T)$ can be written as a biquadratic polynomial in $X$:
\begin{equation*}
\mathbb{V}(T)(X)=e^{-2T}\left[ \left( \xi_1(0)+\bar{\xi}(0)(X^2-1) \right) ^2+\left( \xi_2(0)+\bar\xi(0)(X^2-1)+2\bar\xi(0)X(e^{T/2}-X)\right)^2 \right].
\end{equation*} 
We look for $\bar{X}$ minimizing $\mathbb{V}(T)(X)$ in the interval $[1,e^{T/2}]$ (so that $t^*\in[0,T]$).
Notice that the leading term is $2 e^{-2T} \bar{\xi}(0)^2 \cdot X^4$. Hence, there are at most two local minima in the interval $[1,e^{T/2}]$. 
Furthermore, $\mathbb{V}(T)(1)=e^{-2T}\left[\xi_1(0)^2+(\xi_2(0)+2\bar{\xi}(0)(e^{T/2}-1))^2\right]$ and $\mathbb{V}(T)(e^{T/2}) = e^{-2T}[ (\xi_1(0)+\bar{\xi}(0) (e^{T/2}-1))^2+ (\xi_2(0) + \bar{\xi}(0) (e^{T}-1))^2 ]$, so $\mathbb{V}(T)(1)<\mathbb{V}(T)(e^{T/2})$, which means that 
$\bar{X}< e^{T/2}.$ If $\bar{X} =1$, then $t^*=0$ so it is optimal to act with control $(1,0)$ on the full interval $[0,T]$. If $1<\bar{X}<e^{T/2}$, then $0<t^*<T$. The optimal control strategy will require leaving the system to evolve without control on $[0,t^*[$, and acting with control $\alpha=(1,0)$ on $[t^*,T]$. 
\end{proof}

\begin{remark}
The existence of an initial "Inactivation" period can be proven also with any number of agents (see Theorem \ref{th_inactivation}).
Numerical simulations with any number of agents (see Section \ref{Sec_gen_prac}) show that in some cases it is indeed optimal to let the system evolve without control on an initial time interval $[0, t^*]$, where $t^*>0$. 
\end{remark}

\subsection{Case $M<1$}

Generalizing to the case of any $M<1$, we conduct the same analysis and the optimal control strategy is similar.

\begin{theorem}\label{Th_Ms1}
Let T>0 and M<1. Let $\alpha=(\alpha_1,\alpha_2)\in\mathcal{U}_M$ (see \eqref{Um}) be an optimal control and $\xi$ be the corresponding trajectory of system \eqref{scalar2}.  Define $t_0=\frac{2}{2-M}\ln\left(\frac{2-M}{2M}(\xi_1(0)-\xi_2(0))/ \bar{\xi}(0)+1\right)$. Then
\begin{itemize}
\item[(i)] $T\geq t_0$ if and only of $\xi_1(T)=\xi_2(T)$. In this case, the control satisfies: $\alpha_1+\alpha_2\equiv M$ (so $\bar{\xi}(t)=\bar{\xi}(0) e^{-M t/2 }$).
\item[(ii)] If $T< t_0$, then there exists $t^* \in [0,T[$ such that $\alpha(t)=(0,0)$ for all  $t\in [0,t^*[$ and $\alpha(t)=(1,0)$ for all $t\in [t^*, T]$.

\end{itemize}
\end{theorem}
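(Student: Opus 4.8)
The plan is to follow the blueprint of the proof of Theorem~\ref{Th_M1}, using that for $M<1$ the pointwise constraints $\alpha_i\le 1$ are automatically slack (since $\alpha_1+\alpha_2\le M<1$), so the only active constraint is $\alpha_1+\alpha_2\le M$ and the relevant picture from the Pontryagin analysis is case (a) of Section~\ref{Sec:PMP}. For part (i) I would establish that the three statements $(a)$ $T\ge t_0$, $(b)$ there is some $t\in[0,T]$ with $\xi_1(t)=\xi_2(t)$, and $(c)$ $\xi_1(T)=\xi_2(T)$ are equivalent, and that each of them forces an optimal control to satisfy $\alpha_1+\alpha_2\equiv M$. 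For part (ii) I would read off the bang--bang switching structure from the sign of the leading covector $\lambda_1$.

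The implication $(b)\Rightarrow(c)$ is verbatim the perturbation argument of Theorem~\ref{Th_M1}: past the first instant of equality, replacing $\alpha$ by the symmetric control $\tfrac12(\alpha_1+\alpha_2,\alpha_1+\alpha_2)$ leaves $\bar\xi$ unchanged (by \eqref{xibar}) and brings $(\xi_1-\xi_2)^2$ to zero, so by \eqref{functional_V2} optimality already forces $\xi_1(T)=\xi_2(T)$. For $(c)\Rightarrow(a)$: the transversality condition \eqref{lambda_final} gives $\lambda_1(T)=\lambda_2(T)$, so by Proposition~\ref{prop_equality} $\lambda_1\equiv\lambda_2\equiv\bar\lambda$ on $[0,T]$; since $\bar\lambda(T)=\xi_1(T)=\bar\xi(T)>0$ (Proposition~\ref{prop_positive}) and $\dot{\bar\lambda}=\tfrac12(\alpha_1+\alpha_2)\bar\lambda$, the covectors stay positive, hence Proposition~\ref{prop_optstrat} forces $\alpha_1+\alpha_2\equiv M$, whence $\bar\xi(t)=\bar\xi(0)e^{-Mt/2}$; integrating \eqref{diffxi1} with integrating factor $e^t$ and using $\alpha_1-\alpha_2\le\alpha_1+\alpha_2=M$ shows that $\xi_1(T)=\xi_2(T)$ can hold only if $T\ge t_0$, where $t_0$ is precisely the hitting time of the extremal choice $(\alpha_1,\alpha_2)\equiv(M,0)$ --- this reproduces the displayed formula for $t_0$. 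For $(a)\Rightarrow(c)$: the explicit control $(M,0)$ on $[0,t_0)$ followed by $(M/2,M/2)$ on $[t_0,T]$ uses full strength throughout, so it minimizes $\bar\xi^2$ and in addition achieves $(\xi_1-\xi_2)(T)=0$, realizing the value $\bar\xi(0)^2e^{-MT}$; on the other hand every admissible control satisfies $\bar\xi(T)\ge\bar\xi(0)e^{-MT/2}$, so by \eqref{functional_V2} that value is a global lower bound for $\mathbb{V}(T)$, and hence any optimal $\alpha$ must attain it, which simultaneously forces $\alpha_1+\alpha_2\equiv M$ and $\xi_1(T)=\xi_2(T)$. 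This closes the equivalence and proves part (i).

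For part (ii), when $T<t_0$ the equivalence rules out $(b)$, so by Hypothesis~\ref{hyp_orderxi} and continuity $\xi_1(t)>\xi_2(t)$ on all of $[0,T]$; then \eqref{lambda_final} gives $\lambda_1(T)=\xi_1(T)>\xi_2(T)=\lambda_2(T)$ and $\lambda_1(T)=\xi_1(T)>\bar\xi(T)>0$, while $\tfrac{d}{dt}(\lambda_1-\lambda_2)=\lambda_1-\lambda_2$ keeps $\lambda_1-\lambda_2>0$ throughout. On the set where $\lambda_1\le 0$ one has $\lambda_2<\lambda_1\le 0$, so case (a) of the PMP forces $\alpha\equiv(0,0)$ and therefore $\dot\lambda_1=\tfrac12(\lambda_1-\lambda_2)>0$; this shows $\{t:\lambda_1(t)<0\}$ is an initial interval $[0,t^*)$ (with $t^*<T$ because $\lambda_1(T)>0$), that $\lambda_1$ cannot dwell at $0$, and that $\lambda_1>0$ on $(t^*,T]$. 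Since $\lambda_1>\lambda_2$ persists, on $(t^*,T]$ we are in one of the subcases $\lambda_2<0<\lambda_1$ or $0<\lambda_2<\lambda_1$, in both of which the PMP picture yields $\alpha=(M,0)$. This is exactly the claimed structure (with $t^*=0$ included, corresponding to using $(M,0)$ on the whole interval), and, unlike in Theorem~\ref{Th_M1}, part (ii) only asserts the existence of $t^*$, so no further minimization over the switching time is needed.

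The main obstacle is the switching analysis in part (ii): one must check that $\lambda_1$ changes sign exactly once, cannot remain at $0$ (where the PMP leaves $\alpha_1$ undetermined), and --- crucially --- that the ordering $\lambda_1>\lambda_2$ after the switch excludes the singular regime $\lambda_1=\lambda_2$, so that the control is pinned to $(M,0)$. A secondary care point is making the $(a)\Leftrightarrow(c)$ equivalence fully rigorous, which rests on combining the uniform bound $\bar\xi(T)\ge\bar\xi(0)e^{-MT/2}$ with the exact extremal hitting-time computation that produces $t_0$; the remaining steps are the linear-ODE bookkeeping already carried out for $M=1$.
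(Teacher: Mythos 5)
Your proposal is correct and follows essentially the same route as the paper: the equivalence of $T\ge t_0$, an intermediate meeting time, and $\xi_1(T)=\xi_2(T)$ via the perturbation/transversality arguments of Theorem~\ref{Th_M1}, the explicit $(M,0)$-then-$(M/2,M/2)$ strategy attaining the lower bound $\bar\xi(0)^2e^{-MT}$, and the single-sign-change analysis of $\lambda_1$ for part (ii). Note that your conclusion $\alpha=(M,0)$ after the switch is the consistent one (it is what the paper's own proof derives); the $(1,0)$ in the theorem statement is incompatible with the constraint $\alpha_1+\alpha_2\le M<1$ and is evidently a typo.
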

\begin{remark}
To compute $t^* \in [0,T[$ in the case $T< t_0$, one can compute $\mathbb{V}(T)(e^{t^*/2})$ depending on $t^* \in [0,T]$ similarly to the case $M=1$.
\end{remark}

\begin{proof}
Let $\xi$ be an optimal trajectory achieved with optimal control $\alpha \in \mathcal{U}_M$. We argue as in the case $M=1$.\\
To prove {\it (i)}, first suppose that there exists $\tau\in [0,T]$ such that $\xi_1(\tau)=\xi_2(\tau)$. Then, as in the case $M=1$, necessarily it holds $\xi_1(T)=\xi_2(T)$ and any strategy achieving $\xi_1(T)=\xi_2(T)$ while using maximum control $\alpha_1+\alpha_2\equiv M$ is optimal. Then $\xi_1(t)-\xi_2(t)=0 \Leftrightarrow \int_0^t e^{\frac{2-M}{2}s}(\alpha_1-\alpha_2)(s)ds = (\xi_1(0)-\xi_2(0))/ \bar{\xi}(0)$. Hence, $\min_{\alpha\in\mathcal{U}_M}\{t \; |\; (\xi_1-\xi_2)(t)=0\}$ is obtained when $\alpha_1-\alpha_2$ is maximal, i.e. for $(\alpha_1,\alpha_2)\equiv(M,0)$. With this strategy, $\min_{\alpha\in\mathcal{U}_M}\{t \; |\; (\xi_1-\xi_2)(t)=0\}=t_0$ as defined above. Hence, if there exists $\tau \leq T$ such that $\xi_1(\tau)=\xi_2(\tau)$, then $T\geq t_0$. This proves the first implication of the proposition: if $\xi_1(T)=\xi_2(T)$, then $T\geq t_0$.

Conversely, if $T\geq t_0$, then the strategy $(\tilde{\alpha}_1, \tilde{\alpha}_2)=(M,0)$ on $[0, t_0[$ and $(\tilde{\alpha}_1, \tilde{\alpha}_2)=(\frac{M}{2},\frac{M}{2})$ on $[t_0,T]$ is optimal since it minimizes $\bar{\tilde\xi}(T)$ and achieves $\tilde\xi_1(T)=\tilde\xi_2(T)$. Hence, if $\alpha$ is optimal, it must also satisfy $\alpha_1+\alpha_2\equiv M$ and $\xi_1(T)=\xi_2(T)$, which proves the second implication.

Now assume {\it (ii)} $T<t_0$. From {\it (i)} we get: $\xi_1(t)>\xi_2(t)$ for all $t \in [0,T]$. 
One can then argue as in the case $M=1$. According to Pontryagin's Maximum Principle, $\alpha_2\equiv 0$ and two cases have to be distinguished: either $\lambda_1>0$ at all time, so the set $I_\lambda^+$ (see \eqref{setI}) is non-empty and full control will be used at all time, or there exists $ t^*\in]0,T[$ such that $\lambda_1< 0$ on $[0,t^*[$, $\lambda_1(t^*)=0$ and $\lambda_1> 0$ on $]t^*,T]$, in which case $\alpha=(0,0)$ on $[0,t^*[$ and $\alpha=(M,0)$ on $]t^*,T]$.

\end{proof}

\begin{remark}
Notice that in the limit case $M\rightarrow 1$ of Theorem \ref{Th_Ms1}, one finds the same expression for $t_0$ as in Theorem $\ref{Th_M1}$.
\end{remark}

\subsection{Case $M=2$}

In order to determine the optimal strategy, let us first study the evolution of the covectors $\lambda$. From $\xi_1(T)\geq\bar\xi(T)>0$ (see Prop. \ref{prop_positive} and Hyp. \ref{hyp_orderxi}) and the transversality condition \eqref{lambda_final}, we get $\lambda_1(T)>0$. 

\begin{prop} \label{prop_M2}
Let $M=2$ and $\lambda_1$ and $\lambda_2$ be the covectors corresponding to an optimal control strategy for the system \eqref{scalar2}. Then they satisfy the following properties: 
\begin{itemize}
\item[(i)] If $\lambda_2(T)>0$, then $\lambda_1(t)>0$ and $\lambda_2(t)>0$ for all $t\in[0,T]$. 
\item[(ii)] If $\lambda_2(T)=0$, then $\lambda_1(t)>0$ and $\lambda_2(t)=0$ for all $t\in[0,T]$.
\item[(iii)] If $\lambda_2(T)<0$, then $\lambda_2(t)<0$ for all $t\in[0,T]$. 
\end{itemize}
\end{prop}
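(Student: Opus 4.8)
The plan is to study the sign dynamics of $\lambda_1$ and $\lambda_2$ backward from $t=T$, exploiting the explicit form of the covector ODEs together with the fact (already recorded in the excerpt) that $\frac{d}{dt}(\lambda_1-\lambda_2)=\lambda_1-\lambda_2$, so the sign of $\lambda_1-\lambda_2$ is constant in time, and $\lambda_1(t)\geq\lambda_2(t)$ for all $t$ by \eqref{Condopt}. I would begin by recalling $\dot{\bar\lambda}=\frac1N\sum_j\alpha_j\lambda_j$ \eqref{lambdabar}, which for $M=2$ and two agents reads $\dot{\bar\lambda}=\frac12(\alpha_1\lambda_1+\alpha_2\lambda_2)$, and the componentwise dynamics $\dot\lambda=-\nabla H$ displayed just before Section 4.2. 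The key observation is that under the case-(c) control law (listed in the excerpt: $(\alpha_1,\alpha_2)=(1,1)$ when $0<\lambda_2\leq\lambda_1$, $(1,0)$ when $\lambda_2<0<\lambda_1$, $(0,0)$ when both are negative), each of the three regimes is forward-invariant or can only be entered, never left, as time increases — equivalently, going backward from $T$ one stays in the regime dictated by the sign of $\lambda_2(T)$.

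For part (i): if $\lambda_2(T)>0$, then $\lambda_1(T)\geq\lambda_2(T)>0$, so near $T$ the control is $(1,1)$ and the dynamics become $\dot\lambda_1=\lambda_1$, $\dot\lambda_2=\lambda_2$ (reading off $\dot\lambda_i=\frac{1+\alpha_i}{2}\lambda_i-\frac{1-\alpha_j}{2}\lambda_j$ with $\alpha_1=\alpha_2=1$). Hence on any interval where $(1,1)$ is used, $\lambda_i(t)=\lambda_i(T)e^{t-T}>0$; both components stay strictly positive and never reach zero, so the $(1,1)$ regime persists all the way back to $t=0$. For part (ii): if $\lambda_2(T)=0$, then since $\lambda_1(T)>0$ we have $\lambda_1(T)\neq\lambda_2(T)$, so by Proposition \ref{prop_equality} (contrapositive) $\lambda_1(t)\neq\lambda_2(t)$ for all $t$, and with $z_{12}(t)=z_{12}(T)e^{t-T}$ growing as $t\to T^-$... more carefully, $z_{12}(T)=\lambda_1(T)>0$ so $z_{12}(t)>0$, i.e. $\lambda_1(t)>\lambda_2(t)$ throughout. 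I would show $\lambda_2\equiv 0$ is consistent and forced: if $\lambda_2(\bar t)=0$ for some $\bar t$, plug $\lambda_2=0$ into its ODE — in the regime $\lambda_2<0<\lambda_1$ (control $(1,0)$) we get $\dot\lambda_2=\frac12\lambda_2-0=\frac12\lambda_2$, so $\lambda_2\equiv0$ solves it, and uniqueness of ODE solutions pins down $\lambda_2(t)=0$ for all $t$; then $\dot\lambda_1=\lambda_1-\frac12\lambda_2\cdot 0$, i.e. $\dot\lambda_1=\lambda_1$ (from $\dot\lambda_1=\frac{1+1}{2}\lambda_1-\frac{1-0}{2}\lambda_2$), giving $\lambda_1(t)=\lambda_1(T)e^{t-T}>0$. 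For part (iii): if $\lambda_2(T)<0$, then $z_{12}(T)=\lambda_1(T)-\lambda_2(T)>0$ so $\lambda_1>\lambda_2$ always; moreover $\lambda_2(t)=\lambda_1(t)-z_{12}(T)e^{t-T}$, and I claim $\lambda_2$ cannot reach $0$ going backward: if it did at some first time $\bar t<T$, then on $[\bar t,T]$ the control used is $(1,0)$ (since $\lambda_2<0<\lambda_1$ there), and on this interval $\dot\lambda_2=\frac12\lambda_2$, forcing $\lambda_2(t)=\lambda_2(T)e^{(t-T)/2}<0$ on $[\bar t,T]$ — contradiction with $\lambda_2(\bar t)=0$. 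Hence $\lambda_2(t)<0$ for all $t\in[0,T]$.

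The main obstacle is the bookkeeping of which control regime is active: the argument is clean only because we can bootstrap — assuming the regime, deriving the linear ODE, and checking the solution never leaves the regime backward in time — but one must be careful that the optimal control genuinely follows case (c)'s prescription on the relevant sets (which it does by PMP and \eqref{Condopt}, except on the measure-zero set $\lambda_1=\lambda_2$, which by Proposition \ref{prop_equality} is either empty or all of $[0,T]$; the latter is impossible here since $\lambda_1(T)\neq\lambda_2(T)$ in all three cases). A secondary subtlety is part (iii): the proposition does not claim anything about the sign of $\lambda_1$, consistent with the fact that $\lambda_1$ may well change sign (there can be an "inactivation" subinterval where $(0,0)$ is used). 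I would therefore restrict attention in (iii) to proving $\lambda_2<0$, handling the sub-case where $\lambda_1$ stays positive (control $(1,0)$ throughout, done above) and the sub-case where $\lambda_1(t^*)=0$ for some $t^*$ (on $[0,t^*]$ the control is $(0,0)$, so $\dot\lambda_2=\frac12\lambda_2-\frac12\lambda_1$; but $\lambda_2<\lambda_1\leq 0$ there... actually $\lambda_1<0$ strictly on $[0,t^*)$, and $\lambda_2<\lambda_1<0$, so $\lambda_2$ stays negative), again concluding $\lambda_2<0$ on all of $[0,T]$.
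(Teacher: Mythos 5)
Your proposal is correct and follows essentially the same route as the paper: backward propagation of the sign regimes of $(\lambda_1,\lambda_2)$ from the transversality data at $T$, using the explicit linear covector ODEs under the PMP-prescribed controls (the paper formalizes your ``first time going backward'' steps with infimum arguments and, as you do in your closing paragraph for case (iii), uses the strict increase of $\lambda_1$ to rule out $\lambda_1$ re-crossing zero). The only slip is cosmetic: in (ii) you invoke ``the regime $\lambda_2<0<\lambda_1$'' at a point where $\lambda_2=0$; what you actually need --- and what the rest of your argument supplies --- is that $\alpha_1\equiv 1$ wherever $\lambda_1>0$, which makes the $\lambda_2$-equation homogeneous so that a single zero forces $\lambda_2\equiv 0$ there.
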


\begin{proof} \ \\
\it{(i)} Let $\lambda_2(T)>0$. Suppose that there exists $\tau\in [0,T[$ such that $\lambda_2(\tau)=0$ and $\lambda_2(t)>0$ for all $t\in ]\tau, T]$. Then since $\lambda_1\geq \lambda_2>0$ on $]\tau, T]$, according to Pontryagin's maximum principle (see Section \ref{Sec:PMP}), $(\alpha_1,\alpha_2)\equiv (1,1)$ on $]\tau, T]$, which gives the following evolutions: $\dot{\lambda}_1=\lambda_1$ and $\dot{\lambda}_2=\lambda_2$. Hence, $\lambda_2(\tau)=\lambda_2(T)e^{\tau-T}>0$, which contradicts the definition of $\tau$. Therefore, $\lambda_2(t)>0$ for all $t\in[0,T]$, and by \eqref{Condopt}, $\lambda_1(t)>0$ .\\
\it{(ii)} Let $\lambda_2(T)=0$. 
Let $\tau := \inf_{[0,T]}\{\bar t\in [0,T] \text{ s.t. } \lambda_2(t)=0 \text{ for all } t>\bar t\}$ and suppose that $\tau>0$. By definition of $\tau$, $\lambda_2(\tau)=0$. Since $\lambda_1(t)>\lambda_2(t)$ for all $t$ (see Prop. \ref{prop_equality}), there exists an interval $[\tau-\delta, \tau[$  on which $\lambda_1>0$ and either $\lambda_2>0$ or $\lambda_2<0$. If $\lambda_2(t)>0$ for all $t\in[\tau-\delta, \tau[$, then according to Pontryagin's maximum principle (Section \ref{Sec:PMP}), the control satisfies $\alpha_1(t)=\alpha_2(t)=1$, which gives: $\dot{\lambda}_2(t)=\lambda_2(t)>0$. So $\lambda_2(\tau)>0$, which contradicts the definition of $\tau$.
If on the other hand $\lambda_2(t)<0$ for all $t\in [\tau-\delta, \tau[$, then $\alpha_2(t)=0$ and $\dot{\lambda}_2(t)=\frac{1}{2}\lambda_2(t)<0$, which is impossible since it implies $\lambda_2(\tau)<0$. Hence, $\tau=0$. 
Furthermore, since $\lambda_1(T)>0$ and $\lambda_2\equiv 0$, then $\dot{\lambda}_1=\lambda_1$ in a neighborhood of $T$, which ensures that $\lambda_1(t)>0$ for all $t\in[0,T]$ (by the same reasoning as in \it{(i)}).\\
\it{(iii)} Let $\lambda_2(T)<0$. Define $\tau := \inf_{[0,T]}\{\bar t\in [0,T] \text{ s.t. } \lambda_1(t)>0 \text{ and }\lambda_2(t)<0 \text{ for all } t>\bar t\}$. Then on $]\tau,T]$, as seen in Section \ref{Sec:PMP}, $\alpha_1\equiv 1$ and $\alpha_2\equiv 0$, which gives: $\lambda_2(t)=\lambda_2(\tau)e^{T-\tau}$. Since $\lambda_2(T)<0$, it follows that $\lambda_2(\tau)<0$. Hence, either $\tau=0$ or $\lambda_1(\tau)=0$. 
Notice that since $\lambda_1(t)>\lambda_2(t)$ for all $t$, $\lambda_1$ is strictly increasing (see \eqref{lambda}). 
Then the former case implies that $\lambda_2(t)<0$ for all $t\in[0,T]$. In the latter case, we get that $\lambda_2(t)<0$ for all $t\leq \tau$.
\end{proof}

This information about the covectors allows us to solve the optimization problem based on the initial conditions and the final time.
 Recall 
 from Proposition \ref{prop_positive} 
 that $\xi_1(0)>0$.

\begin{theorem} \label{Th_M2}
Let M=2. Let $(\alpha_1,\alpha_2)\in\mathcal{U}_2$ be an optimal control strategy and $\xi$ be the corresponding trajectory for system \eqref{scalar2}. Define $t_0=2\ln\left(\xi_1(0)/(2\bar{\xi}(0))\right)$.
\begin{itemize}
\item[(i)] If $\xi_2(0)>0$, then $(\alpha_1,\alpha_2)\equiv (1,1).$
\item[(ii)] If $\xi_2(0)\leq 0$ and $T\geq t_0$, then $\xi_2(T)=0$ and $\alpha_1\equiv 1$. For instance the strategy $(\alpha_1,\alpha_2)=(1,0)$ for all $t \in [0,t_0[$ and $(\alpha_1,\alpha_2)=(1,1)$ for all $t \in [t_0,T]$ is optimal. 
Furthermore, if there exists $\bar{t} \in [0,T[$ such that $\xi_2(\bar{t})=0$, then $\xi_2(t)=0$ for all $t \in [\bar{t},T]$.
\item[(iii)] If $\xi_2(0)\leq 0$ and $T<t_0$, then there exists $t^* \in [0,T[$ such that $\alpha(t)=(0,0)$ for all $t\in [0,t^*[$ and $\alpha(t)=(1,0)$ for all $t\in [t^*, T]$.
\end{itemize}
\end{theorem}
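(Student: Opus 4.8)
The plan is to exploit Proposition \ref{prop_M2}, which classifies the sign pattern of the covectors on all of $[0,T]$ according to the sign of $\lambda_2(T)$, and to combine it with the transversality condition $\lambda(T)=(\xi_1(T),\xi_2(T))^T$ so that the sign of $\lambda_2(T)$ is tied to the sign of $\xi_2(T)$. Throughout I would use the decomposition $\mathbb{V}=\bar\xi^2+\tfrac14(\xi_1-\xi_2)^2$ from \eqref{functional_V2} and the two evolution laws \eqref{xibar}, \eqref{diffxi}: full control $\alpha_1+\alpha_2=M=2$ is forced exactly when we want maximal decay of $\bar\xi$, and the difference $(\xi_1-\xi_2)^2$ is steered by the sign of $\alpha_1-\alpha_2$ together with the (positive) factor $\bar\xi$.

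For part (i), suppose $\xi_2(0)>0$. First I would observe that since $\xi_2(0)>0$, Proposition \ref{prop_positive} gives $\xi_2(t)>0$, hence $\xi_1(t)\geq\xi_2(t)>0$, for all $t\in[0,T]$; in particular $\xi_1(T),\xi_2(T)>0$, so by \eqref{lambda_final} $\lambda_2(T)>0$, and Proposition \ref{prop_M2}(i) yields $\lambda_1(t),\lambda_2(t)>0$ on $[0,T]$. The PMP case analysis for $M\geq 2$ (Section \ref{Sec:PMP}, Figure \ref{fig:M2}) then forces $(\alpha_1,\alpha_2)\equiv(1,1)$, which is the claim. (One should note this is consistent: with $\alpha\equiv(1,1)$ one has $\dot{(\xi_1-\xi_2)}=-(\xi_1-\xi_2)$, so ordering and positivity are preserved, and $\bar\xi$ decays at the maximal rate $e^{-t}$.)

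For part (ii), assume $\xi_2(0)\leq 0$ and $T\geq t_0$. The key subclaim is that the optimal trajectory must satisfy $\xi_2(T)=0$. One direction: I would exhibit the candidate strategy $(\alpha_1,\alpha_2)=(1,0)$ on $[0,t_0[$ and $(1,1)$ on $[t_0,T]$; integrating \eqref{scalar2} with $\alpha_2\equiv0$, the slower variable satisfies $\xi_2(t)=e^{-t}\big(\xi_2(0)+\bar\xi(0)(e^t-1)\big)$... wait — more carefully, with $\bar\xi(t)$ evolving by \eqref{xibar} this must be recomputed; the point is that $t_0=2\ln(\xi_1(0)/(2\bar\xi(0)))$ is precisely the time at which $\xi_2$ first reaches $0$ under the extremal choice $\alpha_1-\alpha_2$ maximal, analogously to the computation of $t_0$ in Theorem \ref{Th_M1}. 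Once $\xi_2$ hits $0$ we switch to $(1,1)$ and $\xi_2$ stays at $0$ while $\bar\xi$ keeps decaying maximally; this strategy is optimal because it simultaneously achieves the minimum possible $\bar\xi(T)$ (full control throughout) and drives $\xi_2$ to $0$, hence minimizes both terms of \eqref{functional_V2} separately (note $\xi_1\geq 0\geq$ nothing forces $\xi_1-\xi_2$ small, but $\mathbb{V}=\xi_1^2+\xi_2^2$ and with $\xi_2=0$ and $\bar\xi$ minimal, $\xi_1=2\bar\xi$ is also minimal). The other direction — that \emph{every} optimal strategy has $\xi_2(T)=0$ — I would argue by contradiction via the covectors: if $\xi_2(T)>0$ then by Proposition \ref{prop_M2}(i) $\alpha\equiv(1,1)$, which gives $\xi_2(t)=e^{-t}\xi_2(0)\leq 0$ for all $t$, contradicting $\xi_2(T)>0$; if $\xi_2(T)<0$ then $\lambda_2(T)<0$, Proposition \ref{prop_M2}(iii) gives $\lambda_2<0$ throughout so $\alpha_2\equiv 0$, but then one checks $\xi_1+\xi_2=2\bar\xi$ with $\alpha_1$-driven decay and $\xi_1-\xi_2$ increasing, and a competitor using $(1,1)$ after $\xi_2$ reaches $0$ strictly improves $\mathbb{V}(T)$ — contradiction. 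Hence $\xi_2(T)=0$, $\lambda_2(T)=0$, and by Proposition \ref{prop_M2}(ii) $\lambda_1>0$ throughout, so $\alpha_1\equiv1$ by the PMP. The invariance statement ($\xi_2(\bar t)=0\Rightarrow\xi_2\equiv 0$ afterwards) follows from the second part of Proposition \ref{prop_positive} applied with $\tau=\bar t$ together with $\xi_2(T)=0$ forcing $\xi_2\equiv 0$ on $[\bar t,T]$.

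For part (iii), assume $\xi_2(0)\leq 0$ and $T<t_0$. The argument mirrors Theorem \ref{Th_M1}(ii) and Theorem \ref{Th_Ms1}(ii). First, since $T<t_0$ there is no time in $[0,T]$ at which $\xi_2$ can be brought to $0$ (by the extremality of $t_0$ among all controls, exactly as in the $M=1$ proof), so $\xi_2(T)<0$, whence $\lambda_2(T)<0$ and by Proposition \ref{prop_M2}(iii) $\lambda_2<0$ and thus $\alpha_2\equiv 0$ on $[0,T]$. It remains to determine $\alpha_1$: from \eqref{lambda} one computes $\dot\lambda_1=\tfrac{1+\alpha_1}{2}\lambda_1-\tfrac12\lambda_2$, and since $\lambda_1>\lambda_2$ always and $\lambda_1(T)=\xi_1(T)>0$, $\lambda_1$ is strictly increasing; hence either $\lambda_1>0$ on all of $[0,T]$ (then $\alpha_1\equiv1$, i.e. $t^*=0$), or there is a unique $t^*\in(0,T)$ with $\lambda_1<0$ on $[0,t^*)$, $\lambda_1(t^*)=0$, $\lambda_1>0$ on $(t^*,T]$, giving $\alpha=(0,0)$ on $[0,t^*)$ and $\alpha=(1,0)$ on $(t^*,T]$. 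Both are subsumed under the stated form with $t^*\in[0,T)$.

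The main obstacle I expect is the rigorous "only direction" of the subclaim in part (ii): showing that no optimal control can leave $\xi_2(T)\neq 0$. The clean route is the covector trichotomy of Proposition \ref{prop_M2} (ruling out $\xi_2(T)>0$ immediately, and $\xi_2(T)<0$ by an explicit improving competitor), rather than a direct variational computation; getting the bookkeeping of $\bar\xi$ right in the integrated formulas for $\xi_1,\xi_2$ (the factor $e^{-t}$ versus the self-consistent decay of $\bar\xi$) is the one place routine care is needed, exactly as in Theorem \ref{Th_M1}.
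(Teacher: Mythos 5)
Your overall route is the paper's: part (i) via Proposition \ref{prop_positive}, transversality and Proposition \ref{prop_M2}, and parts (ii)--(iii) via the covector trichotomy together with an explicit competitor strategy. However, two of your justifications contain genuine errors. In (ii), you claim the candidate $\beta=(1,0)$ on $[0,t_0[$, $(1,1)$ on $[t_0,T]$ is optimal because it ``achieves the minimum possible $\bar\xi(T)$ (full control throughout)'' and hence minimizes both terms of \eqref{functional_V2} separately. This is false: on $[0,t_0[$ the candidate uses $\beta_1+\beta_2=1$, not $2$, so $\bar\xi$ does not decay at the maximal rate; and for $M=2$ the two terms of \eqref{functional_V2} cannot in general be minimized simultaneously, since minimizing $\bar\xi(T)^2$ forces $(1,1)$, which gives $\xi_2(T)=\xi_2(0)e^{-T}<0$ whenever $\xi_2(0)<0$. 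The decomposition that works is $\mathbb{V}(T)=\tfrac12\bigl(\xi_1(T)^2+\xi_2(T)^2\bigr)$: since $\dot\xi_1=-\xi_1+(1-\alpha_1)\bar\xi\geq-\xi_1$ and $\xi_1>0$, the choice $\alpha_1\equiv1$ minimizes $\xi_1(T)^2$ regardless of $\alpha_2$, while $\xi_2(T)=0$ minimizes $\xi_2(T)^2$; the candidate attains both minima simultaneously, hence so must every optimal control.

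In (iii), you assert that $\xi_2$ cannot be brought to $0$ before $t_0$ ``by the extremality of $t_0$ among all controls.'' That extremality is false: with $\alpha\equiv(0,0)$ one has $\bar\xi\equiv\bar\xi(0)$ and $\xi_2(t)=\bar\xi(0)+(\xi_2(0)-\bar\xi(0))e^{-t}$, which vanishes at time $\ln\frac{\xi_1(0)-\xi_2(0)}{\xi_1(0)+\xi_2(0)}$, strictly smaller than $t_0=\ln\frac{\xi_1(0)^2}{(\xi_1(0)+\xi_2(0))^2}$ when $\xi_2(0)<0$. The argument must be run conditionally, as the paper does: assume the optimal trajectory has $\xi_2(T)=0$; then $\lambda_2(T)=0$, Proposition \ref{prop_M2} gives $\lambda_1>0$ and $\lambda_2\equiv0$, and the PMP forces $\alpha_1\equiv1$; only among controls with $\alpha_1\equiv1$ is the growth of $\xi_2$ maximized by $\alpha_2\equiv0$ (which maximizes $\bar\xi$), and for that strategy $\xi_2$ first reaches $0$ at $t_0>T$, a contradiction. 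With these two repairs, the remainder of your plan (the $\xi_2(T)>0$ exclusion, the invariance of $\xi_2=0$ via Proposition \ref{prop_positive}, and the $t^*$ dichotomy for $\lambda_1$) goes through exactly as in the paper.
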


\begin{proof}
Let $(\alpha_1,\alpha_2)$ be an optimal control strategy and $\xi$ be the corresponding trajectory. \\
{\it (i)} Let $\xi_2(0)>0$. 
According to Prop. \ref{prop_positive}, for all $t\in [0,T]$ it holds $\xi_1(t)>0$ and $\xi_2(t)>0$.  
Then $\lambda_1(T)>0$ and $\lambda_2(T)>0$. From Prop. \ref{prop_M2} it follows that  $\lambda_1(t)>0$ and $\lambda_2(t)>0$ for all $t\in [0,T]$. According to the PMP (see Section \ref{Sec:PMP}), maximal control has to be used at all time, i.e. $(\alpha_1,\alpha_2)(t)=(1,1)$ for all $t\in [0,T]$. 

For cases {\it (ii)} and {\it (iii)}, let $\xi_2(0)\leq 0$. 
By Prop.\ \ref{prop_positive} it holds $\xi_1(t)>0$ for all $t \in [0,T]$. Suppose that $\xi_2(T)>0$. Then from Prop.\ \ref{prop_M2} we get $\lambda_1(t)\geq \lambda_2(t)>0$ for all $t\in[0,T]$, so $(\alpha_1,\alpha_2)\equiv(1,1)$. But with this strategy $\dot \xi_2 = -\xi_2$, so $\xi_2(t)=\xi_2(0)e^{-t}\leq 0$ for all $t \in [0,T]$, which contradicts $\xi_2(T)>0$. Hence $\xi_2(T)\leq 0$. \\
{\it (ii)} First assume that $T\geq t_0$. 
Let us show that $\xi_2(T)=0$ and $\alpha_1\equiv 1$.
Such a strategy exists, since for instance the control $(\beta_1,\beta_2)(t)=(1,0)$ for $t \in [0,t_0[$ and $(\beta_1,\beta_2)(t)=(1,1)$ for $t \in [t_0,T]$ achieves $\xi_2^\beta(t)=0$ for all $t \in [t_0,T]$ (where $\xi^\beta$ denotes the trajectory corresponding to the control strategy $\beta$) -- by direct computation of \eqref{scalar2}. 
Suppose that $\xi_2(T)<0$. Then $\alpha$ cannot be optimal since the control strategy $\beta$
achieves the minimum of $\xi_1^\beta(T)^2$, see \eqref{scalar2}, and of $\xi_2^\beta(T)^2$ and therefore the minimum of $\mathbb{V}(T)=\xi_1^\beta(T)^2+\xi_2^\beta(T)^2$. Hence $\alpha$ must satisfy $\alpha_1\equiv 1$ and $\xi_2(T)=0$ in order to perform as well as $\beta$.  Obviously, all strategies that achieve $\xi_2(T)=0$ with $\alpha_1 \equiv 1$ achieve the same final positions (see \eqref{scalar2}) and thus have the same $\mathbb{V}(T)$. 
Furthermore, if there exists a $\hat{t}<T$ such that $\xi_2(\hat{t})=0$, then $\xi_2(t)=0$ for all $t \in [\hat{t},T]$: if $\xi_2(\bar t)=0$, then $\dot\xi_2(\bar t)=(1-\alpha_2)\bar\xi(\bar t) \geq 0$ and therefore $\xi_2$ cannot become negative, once it reaches $0$. On the other hand, if $\xi_2(t)>0$, then $\xi_2(T)>0$ by Prop.\ \ref{prop_positive}.\\
{\it (iii)} Assume now that $T< t_0$. Firstly, we show that an optimal strategy (by PMP) always achieves $\xi_2(T)<0$. We argue by contradiction: Assume that $\xi_2(T)=0$. 
Then $\lambda_1(T)>0$ and $\lambda_2(T)=0$ and, according to Proposition \ref{prop_M2}, it follows that $\lambda_1(t)>\lambda_2(t)=0$ for all  $t\in[0,T]$. According to the PMP, $\alpha_1 \equiv 1$.
 Then the growth of $\xi_2$ is maximal if, and only if, $\alpha_2 \equiv 0$ since in this case $\bar\xi$ is maximal. 
But with this strategy $\xi_2$ cannot reach $0$ before $t_0$ -- by direct computation of \eqref{scalar2}. Therefore $\xi_2(T)<0$, so $\lambda_2(T)<0$ and 
 $\lambda_2(t)<0$ for all $t \in [0,T]$ by Prop. \ref{prop_M2}. Hence we are in the same situation as in the case $M=1$ and $M<1$. Two cases are possible: either $\lambda_1> 0$ at all time, so the set $I_\lambda^+$ is non-empty and full control on $\xi_1$ is used at all time, or there exists $ t^*\in]0,T[$ such that $\lambda_1< 0$ on $[0,t^*[$, $\lambda_1(t^*)=0$ and $\lambda_1> 0$ on $]t^*,T]$, in which case $\alpha=(0,0)$ on $[0,t^*[$ and $\alpha=(1,0)$ on $]t^*,T]$.
\end{proof}

\begin{remark}
To compute $t^* \in [0,T[$ in the case $\xi_1(0)>-\xi_2(0)>0$ and $T<t_0$, one can compute $\mathbb{V}(T)(X)$ depending on $t^* \in [0,T[$ similarly to the case $M=1$.
\end{remark}

\subsection{Case $1<M<2$}

As in the case $M=2$, we state the following properties concerning the covectors $\lambda$. 

\begin{prop} \label{prop_M12}
Let $M\in ]1,2[$ and $\lambda_1$ and $\lambda_2$ be the covectors corresponding to an optimal control strategy for the system \eqref{scalar2}. They satisfy the following properties: 
\begin{itemize}
\item[(i)] If $\lambda_2(T)>0$, then $\lambda_1(t)>0$ and $\lambda_2(t)>0$ for all $t\in[0,T]$. 
\item[(ii)] If $\lambda_2(T)=0$, then $\lambda_1(t)>0$ and $\lambda_2(t)=0$ for all $t\in[0,T]$.
\item[(iii)] If $\lambda_2(T)<0$, then $\lambda_2(t)<0$ for all $t\in[0,T]$. 
\end{itemize}
\end{prop}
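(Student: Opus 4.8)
The plan is to transcribe the proof of Proposition~\ref{prop_M2} almost line for line, replacing the coefficient $1$ coming from the control $(1,1)$ (optimal on positive covectors when $M=2$) by $M/2$, which is the coefficient produced by the controls $(1,M-1)$ and $(M/2,M/2)$ optimal on positive covectors when $1<M<2$. The ingredients I would reuse unchanged are: the transversality condition \eqref{lambda_final} together with $\xi_1(T)>0$ (Proposition~\ref{prop_positive} and Hyp.~\ref{hyp_orderxi}), which gives $\lambda_1(T)>0$; the ordering \eqref{Condopt}, i.e.\ $\lambda_1(t)\ge\lambda_2(t)$ for all $t$; and Proposition~\ref{prop_equality}, which shows that in each of the regimes $\lambda_2(T)>0$, $\lambda_2(T)=0$, $\lambda_2(T)<0$ the inequality $\lambda_1>\lambda_2$ in fact holds strictly on all of $[0,T]$ (the sole exception, $\lambda_1(T)=\lambda_2(T)>0$, forces $\lambda_1\equiv\lambda_2>0$ and is immediate).

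The one preliminary computation is to evaluate $\dot\lambda_2=\tfrac12\alpha_1\lambda_1+\tfrac12\alpha_2\lambda_2+\tfrac12(\lambda_2-\lambda_1)$, obtained from \eqref{lambda} with $N=2$, on each control regime of case~(b) of Section~\ref{Sec:PMP}. One finds: if $\lambda_2>0$ then $\dot\lambda_2=\tfrac{M}{2}\lambda_2>0$ (both in the regime $(1,M-1)$ and in the tie regime $(M/2,M/2)$); if $\lambda_1\ge0$ and $\lambda_2\le0$ then $\dot\lambda_2=\tfrac12\lambda_2\le0$; and if $\lambda_1\le0$ (so $\lambda_2<\lambda_1\le0$) and the control is $(0,0)$ then $\dot\lambda_2=\tfrac12(\lambda_2-\lambda_1)<0$. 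In particular, whenever $\lambda_1\ge0$ the covector $\lambda_2$ solves a scalar linear equation $\dot\lambda_2=g(t)\lambda_2$ with a bounded, nonnegative, measurable coefficient $g$. For (iii) I would also record, by the same case check using $\lambda_1>\lambda_2$, that $\dot\lambda_1>0$ whenever $\lambda_1\ge\lambda_2$, so that once $\lambda_1$ is $\ge0$ it stays positive; combined with $\lambda_1(T)>0$ this means $\lambda_1$ is negative on an initial interval $[0,t^\star[$ and positive on $]t^\star,T]$ (with $t^\star=0$ allowed).

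With these facts the three items follow as for $M=2$. For (i), if $\lambda_2(T)>0$ set $\tau=\sup\{s:\lambda_2(s)=0\}$ (if this set is empty, $\lambda_2>0$ throughout by continuity); on $]\tau,T]$ one has $\lambda_1\ge\lambda_2>0$, so integrating $\dot\lambda_2=\tfrac{M}{2}\lambda_2$ backward from $T$ gives $\lambda_2(\tau)>0$, contradicting $\lambda_2(\tau)=0$; hence $\lambda_2>0$ on $[0,T]$ and then $\lambda_1\ge\lambda_2>0$. For (ii), if $\lambda_2(T)=0$ let $\tau$ be the left endpoint of the maximal subinterval ending at $T$ on which $\lambda_2\equiv0$; if $\tau>0$ then $\lambda_1(\tau)>\lambda_2(\tau)=0$, so $\lambda_1>0$ on some $[\tau-\delta,\tau]$, and uniqueness for $\dot\lambda_2=g(t)\lambda_2$ with $\lambda_2(\tau)=0$ forces $\lambda_2\equiv0$ on $[\tau-\delta,\tau]$, contradicting the choice of $\tau$; hence $\lambda_2\equiv0$ on $[0,T]$, and then $\lambda_1>\lambda_2\equiv0$ everywhere by Proposition~\ref{prop_equality}. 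For (iii), if $\lambda_2(T)<0$ set $\tau=\inf\{\bar t:\lambda_1>0\text{ and }\lambda_2<0\text{ on }]\bar t,T]\}$, which is $<T$ by continuity; on $]\tau,T]$ the control is $(1,0)$, so $\lambda_2(t)=\lambda_2(T)e^{(t-T)/2}$ and hence $\lambda_2(\tau)<0$; if $\lambda_1(\tau)>0$ one could enlarge the interval, so $\lambda_1(\tau)=0$, i.e.\ $\tau=t^\star$, whence $\lambda_1<0$ on $[0,\tau[$ and therefore $\lambda_2<\lambda_1<0$ there; combining the pieces, $\lambda_2<0$ on all of $[0,T]$.

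The only genuinely delicate point is item (ii): since optimal controls are merely measurable, the coefficient $g$ in $\dot\lambda_2=g(t)\lambda_2$ is only measurable, so I would invoke uniqueness for linear ODEs with measurable coefficients rather than anything using continuity of $g$. Making this work requires that $\dot\lambda_2$ really does have the scalar-multiplicative form in \emph{every} admissible regime with $\lambda_1\ge0$, including the PMP-underdetermined cases $\lambda_2=0<\lambda_1$ (where $\dot\lambda_2=0$) and $\lambda_2<0=\lambda_1$ (where $\dot\lambda_2=\tfrac12\lambda_2$) --- which is precisely what the preliminary computation guarantees. Everything else is a verbatim copy of the argument for $M=2$.
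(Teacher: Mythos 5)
Your proposal is correct and follows essentially the same route as the paper, which itself proves only item (i) explicitly (replacing the coefficient $1$ from the $M=2$ case by $M/2$ on the regimes $(1,M-1)$ and $(M/2,M/2)$) and refers back to Proposition \ref{prop_M2} for (ii) and (iii). Your handling of (ii) via uniqueness for the scalar linear ODE $\dot\lambda_2=g(t)\lambda_2$ with bounded measurable $g$ is in fact slightly more careful than the paper's sign-dichotomy argument, but the substance is identical.
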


\begin{proof} \
The proof is very similar to that of Prop \ref{prop_M2}.\\
\it{(i)} Let $\lambda_2(T)>0$. Suppose that there exists $\tau\in [0,T[$ such that $\lambda_2(\tau)=0$ and $\lambda_2(t)>0$ for all $t\in ]\tau, T]$. Then if $\lambda_1> \lambda_2>0$ on $]\tau, T]$, according to Pontryagin's maximum principle (see Section \ref{Sec:PMP}), $(\alpha_1,\alpha_2)\equiv (1,M-1)$ on $]\tau, T]$, which gives $\dot{\lambda}_2=\frac{M}{2}\lambda_2$. If $\lambda_1= \lambda_2>0$ on $]\tau, T]$, then $\alpha_1+\alpha_2\equiv M$ (see Figure \ref{fig:M12}), which also gives $\dot{\lambda}_2=\frac{M}{2}\lambda_2$. Hence, $\lambda_2(\tau)=\lambda_2(T)e^{\frac{M}{2}(\tau-T)}>0$, which contradicts the definition of $\tau$. \\
For \it{(ii)} and \it{(iii)} we reason the same way as in the proof of Proposition \ref{prop_M2}.
\end{proof}

As in the previous sections, this allows us to solve the optimal control problem by distinguishing cases based on the initial conditions and the final time. The case $\xi_2(0)<0$ is illustrated in Figure \ref{fig:2agents}.

\begin{theorem}\label{Th_M12}\ \\
Let $M\in]1,2[$. Let $\alpha\in\mathcal{U}_M$ be an optimal control strategy and $\xi$ be the corresponding trajectory.\\
Define $t_0\leq t_1\leq t_2$ as: $t_0=2\ln\left(\frac{\xi_1(0)}{2\bar\xi(0)}\right), \; t_1=\frac{2}{2-M}\ln\left(\frac{\xi_1(0)}{2\bar\xi(0)}\right) \text{ and }  t_2=\frac{2}{2-M} \ln(\frac{\xi_1(0)}{\bar\xi(0)})$. \\
If $\xi_2(0) > 0$, two subcases are to be distinguished: 
\vspace{-2mm}
\begin{itemize}\itemsep0pt \parskip0pt \parsep0pt
\item If $T<t_2$, then $(\alpha_1,\alpha_2) \equiv (1,M-1)$ and $0<\xi_2(T)<\xi_1(T)$. 
\item If $T\geq t_2$, $\xi_1(T)=\xi_2(T)$ and $\alpha_1+\alpha_2=M$. 
\end{itemize}
\vspace{-2mm}
In the case $\xi_2(0)< 0$, four subcases appear: 
\vspace{-2mm}
\begin{itemize}\itemsep0pt \parskip0pt \parsep0pt
\item If $T<t_0$, then $\xi_2(t)<0$ and there exists $t^*\in [0,T[$ such that $(\alpha_1,\alpha_2)(t)= (0,0)$ for all $t\in [0,t^*]$ and $(\alpha_1,\alpha_2)(t)= (1,0)$ for all $t\in ]t^*,T]$.
\item If $t_0\leq T\leq t_1$, then $\alpha_1\equiv 1$ and $\xi_2(T)=0$.
\item If $t_1 < T<t_2$, then $(\alpha_1,\alpha_2)\equiv (1,M-1)$ and $0<\xi_2(T)<\xi_1(T)$. 
\item If $t_2\leq T$, then $\alpha_1+\alpha_2\equiv M$ and $\xi_1(T)=\xi_2(T)$. 
\end{itemize}
\end{theorem}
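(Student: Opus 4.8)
The plan is to run Pontryagin's Maximum Principle exactly as in the cases $M\le1$ and $M=2$, reduce everything to the sign history of the ordered covectors $\lambda_1\ge\lambda_2$, and show that for $1<M<2$ an extremal trajectory lives in one of only three ``modes'', selected by $\mathrm{sign}\,\xi_2(T)$. First I record, from the transversality condition $\lambda(T)=(\xi_1(T),\xi_2(T))$ and $\tfrac{d}{dt}(\lambda_1-\lambda_2)=\lambda_1-\lambda_2$, that $\lambda_1(t)-\lambda_2(t)=(\xi_1(T)-\xi_2(T))e^{t-T}\ge0$, so $\mathrm{sign}\,\lambda_i(T)=\mathrm{sign}\,\xi_i(T)$ and the PMP case split of Section~\ref{Sec:PMP} applies. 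Since $\xi_1(0)\ge\bar\xi(0)>0$, Proposition~\ref{prop_positive} gives $\xi_1>0$, so $\lambda_1(T)>0$; evaluating $\dot\lambda_1=\tfrac{1+\alpha_1}{2}\lambda_1-\tfrac{1-\alpha_2}{2}\lambda_2$ in each PMP regime shows $\dot\lambda_1\ge0$ always and $\dot\lambda_1>0$ whenever $\lambda_1>0$, hence $\lambda_1$ is non-decreasing and vanishes at most once. Together with Proposition~\ref{prop_M12}, an optimal trajectory is of exactly one type: \emph{(I)} $\xi_2(T)<0$, so $\lambda_2<0$ on $[0,T]$ and $\lambda_1$ is negative on $[0,t^*)$, positive on $(t^*,T]$ for some $t^*\in[0,T)$ (with $t^*<T$ because $\lambda_1(T)>0$), giving control $(0,0)$ then $(1,0)$; \emph{(II)} $\xi_2(T)=0$, so $\lambda_2\equiv0$, $\lambda_1>0$ throughout, hence $\alpha_1\equiv1$ and $\alpha_2\in[0,M-1]$ arbitrary (irrelevant, since $\dot\xi_1=-\xi_1$ forces $\xi_1(T)=\xi_1(0)e^{-T}$ and $\mathbb V(T)=\tfrac12\xi_1(0)^2e^{-2T}$); \emph{(III)} $\xi_2(T)>0$, so $\lambda_2>0$ throughout and $\alpha_1+\alpha_2\equiv M$, with control $(1,M-1)$ when $\lambda_1>\lambda_2$ and the under-determined split summing to $M$ when $\lambda_1=\lambda_2$, i.e.\ when $\xi_1(T)=\xi_2(T)$.

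Next I integrate \eqref{scalar2} along the constant controls that occur above: $\xi_2(t)=e^{-t}\big(\xi_2(0)+2\bar\xi(0)(e^{t/2}-1)\big)$ for $(1,0)$, and $\xi_2(t)=e^{-t}\big(\xi_2(0)+2\bar\xi(0)(e^{(1-M/2)t}-1)\big)$, $\xi_1(t)-\xi_2(t)=e^{-t}\big(\xi_1(0)-\xi_2(0)-2\bar\xi(0)(e^{(1-M/2)t}-1)\big)$ for $(1,M-1)$; these vanish exactly at $t_0$, $t_1$, $t_2$ respectively, with $t_0<t_1<t_2$ and all positive when $\xi_2(0)<0$, while $t_0,t_1\le0$ when $\xi_2(0)\ge0$. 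A comparison-principle check then exploits two monotonicities: among controls with $\alpha_1\equiv1$, $\xi_2(\cdot)$ is pointwise maximized by $\alpha_2\equiv0$ and minimized by $\alpha_2\equiv M-1$ (a larger $\alpha_2$ shrinks both $\bar\xi$ and the factor $1-\alpha_2$); among controls with $\alpha_1+\alpha_2\equiv M$, $\xi_2$ is maximized and $\xi_1-\xi_2$ minimized by $(1,M-1)$; and any mode-(I) trajectory satisfies $\xi_2(\cdot)\ge\xi_2^{(1,0)}(\cdot)$ pointwise, because the $(0,0)$ phase only enlarges $\bar\xi$ and $\xi_2$ and $\bar\xi$ then remains larger on the $(1,0)$ phase. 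Hence: under $\alpha_1\equiv1$, $\xi_2$ cannot reach $0$ before $t_0$ and stays positive after $t_1$; under $\alpha_1+\alpha_2\equiv M$, $\xi_2$ cannot reach $0$ before $t_1$ and $\xi_1-\xi_2$ cannot reach $0$ before $t_2$; and since $\dot\xi_2\ge0$ in mode~(I) (using $\bar\xi\ge\xi_2$), a mode-(I) trajectory has $\xi_2(t)\ge0$ once $t\ge t_0$.

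Assembling the cases: for $\xi_2(0)>0$ we are in mode~(III) by Proposition~\ref{prop_positive}; if $T\ge t_2$, the concatenation ``$(1,M-1)$ until consensus at $t_2$, then $(M/2,M/2)$'' attains $\bar\xi(T)=\bar\xi(0)e^{-MT/2}$ (the minimum of $\bar\xi(T)$, as $\dot{\bar\xi}\ge-\tfrac M2\bar\xi$) together with $\xi_1(T)=\xi_2(T)$, hence the global lower bound $\mathbb V(T)=\bar\xi(T)^2$, so it is optimal and forces $\alpha_1+\alpha_2\equiv M$, $\xi_1(T)=\xi_2(T)$; if $T<t_2$, consensus is unreachable by the estimates above, so $\xi_1(T)>\xi_2(T)$, hence $\lambda_1>\lambda_2>0$ and control $(1,M-1)$ throughout, with $0<\xi_2(T)<\xi_1(T)$. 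For $\xi_2(0)<0$, the same estimates make mode~(II) impossible when $T<t_0$ or $T>t_1$, mode~(III) impossible when $T<t_1$ (positivity of $\xi_2(T)$ fails) and its consensus branch impossible when $T<t_2$, and mode~(I) impossible when $T\ge t_0$; since an optimal control exists (convexity remark) and must be one of the three modes, this yields precisely the four subcases: $T<t_0\Rightarrow$ mode~(I); $t_0\le T\le t_1\Rightarrow$ mode~(II), i.e.\ $\alpha_1\equiv1$ and $\xi_2(T)=0$; $t_1<T<t_2\Rightarrow$ mode~(III) with control $(1,M-1)$ and $0<\xi_2(T)<\xi_1(T)$; $T\ge t_2\Rightarrow$ mode~(III)-consensus, again optimal by the lower-bound argument. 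In the mode-(I) regime one finally locates $t^*$ by setting $X=e^{t^*/2}$ and minimizing the resulting quartic $\mathbb V(T)(X)$ on $[1,e^{T/2}]$, exactly as in Theorem~\ref{Th_M1}.

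The delicate part is the comparison-principle step: one must verify carefully that $\xi_2$ under $\alpha_1\equiv1$ is squeezed between its $\alpha_2\equiv0$ and $\alpha_2\equiv M-1$ versions even though $\bar\xi$ itself depends on $\alpha_2$, and that an Inactivation phase followed by $(1,0)$ keeps $\xi_2$ above the pure-$(1,0)$ trajectory; these monotonicities are precisely what converts the three threshold integrations into the exclusions of the ``wrong'' modes, and hence into the four-way case distinction. Everything else --- the PMP bookkeeping, the sign propagation through Proposition~\ref{prop_M12}, the linear-ODE integrations, and the quartic minimization for $t^*$ --- is routine and runs parallel to Theorems~\ref{Th_M1} and~\ref{Th_M2}.
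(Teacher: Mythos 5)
Your proposal is correct and follows essentially the same route as the paper's appendix proof: the trichotomy on $\mathrm{sign}\,\xi_2(T)=\mathrm{sign}\,\lambda_2(T)$ via Proposition \ref{prop_M12}, explicit integration of the constant-control trajectories to identify $t_0,t_1,t_2$ as threshold times, monotonicity-in-$\alpha_2$ comparisons to exclude the incompatible modes in each time window, and the explicit lower-bound construction for $T\geq t_2$. The only nit is the boundary case $T=t_1$, which belongs to the mode-(II) regime of the statement: your exclusion of mode (III) is phrased for $T<t_1$, but since $\xi_2^{(1,M-1)}(t_1)=0$ your own comparison already gives $\xi_2(T)\leq 0$ there, so the exclusion extends with no new idea.
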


\begin{remark}
Notice that if $\xi_1(0)=\xi_2(0)$, then $t_2=0$.
\end{remark}

\begin{remark}
In the limit case $M\rightarrow 1$, the times $t_0$ and $t_1$ are equal, which is in line with Theorem \ref{Th_M1}. In the limit case $M\rightarrow 2$, $t_1$ and $t_2$ are undefined, in line with Theorem \ref{Th_M2}. 
\end{remark}

\begin{figure}[h]
        \begin{center}
        \begin{subfigure}[b]{0.2\textwidth}
                \includegraphics[trim=1.5cm 0cm 1.5cm 0.5cm, clip=true, scale=0.22]{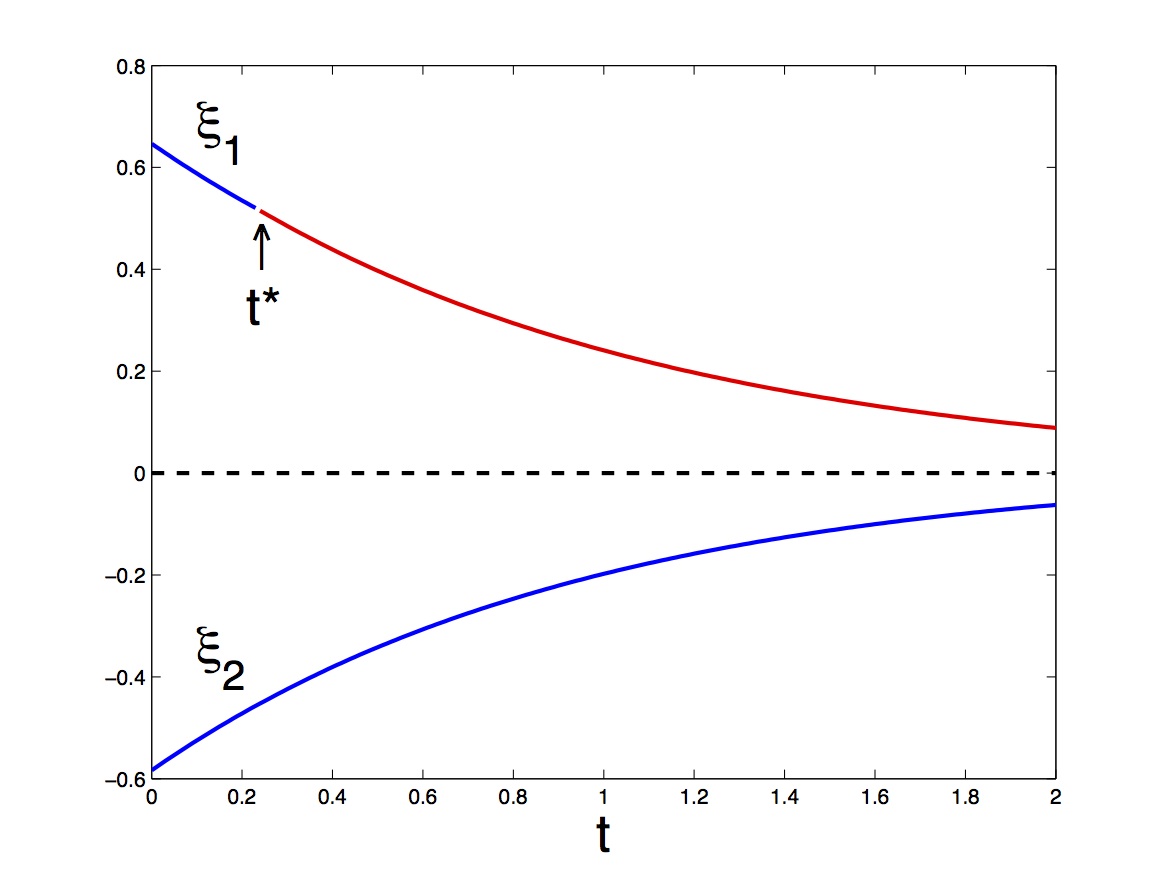}
                \caption{$T < t_0$}
                \label{fig:2agents1}
        \end{subfigure}%
        \hspace{10pt}
        \begin{subfigure}[b]{0.2\textwidth}
                \includegraphics[trim=1.5cm 0cm 1.5cm 0.5cm, clip=true, scale=0.22]{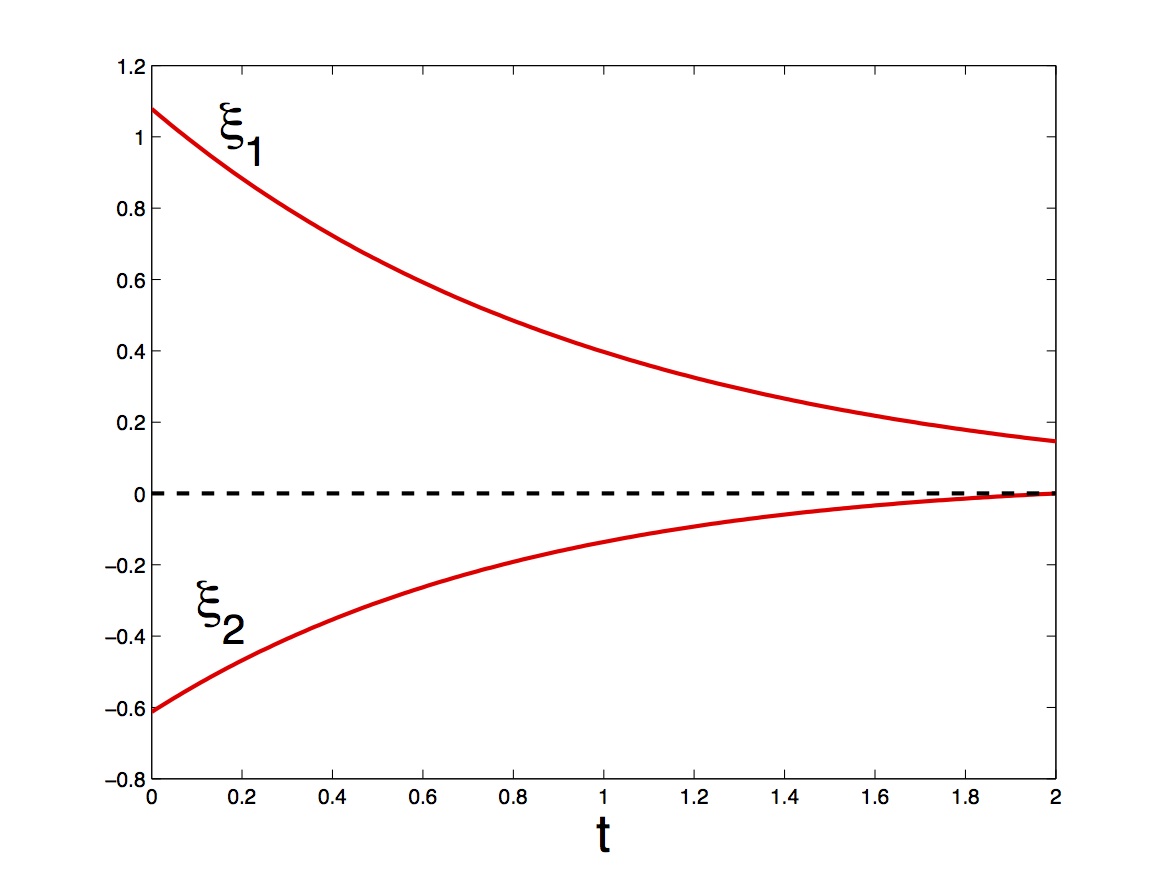}
                \caption{$t_0 \leq T\leq t_1$}
                \label{fig:2agents2}
        \end{subfigure}
        \hspace{10pt}
        \begin{subfigure}[b]{0.2\textwidth}
                \includegraphics[trim=1.5cm 0cm 1.5cm 0.5cm, clip=true, scale=0.22]{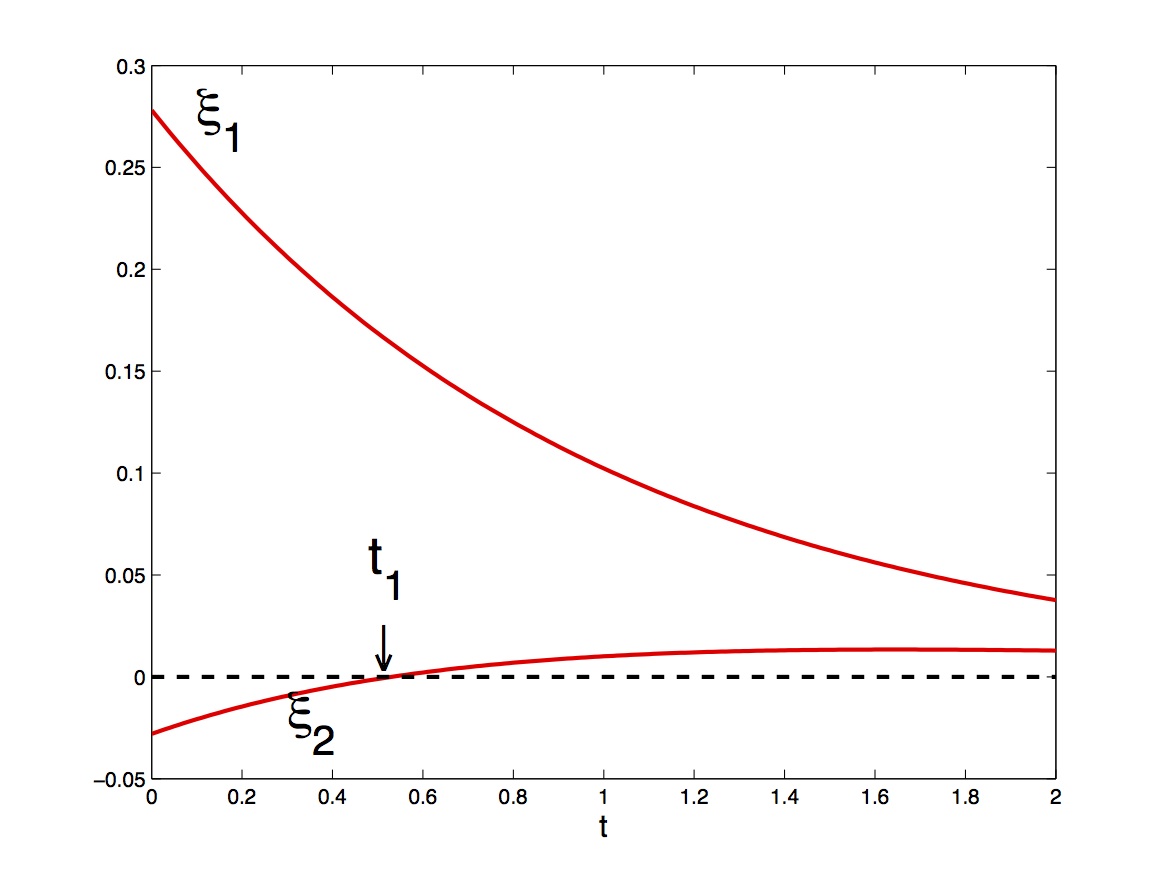}
                \caption{$t_1 < T <t_2$}
                \label{fig:2agents3}
        \end{subfigure}
        \hspace{10pt}
        \begin{subfigure}[b]{0.2\textwidth}
                \includegraphics[trim=1.5cm 0cm 1.5cm 0.5cm, clip=true, scale=0.22]{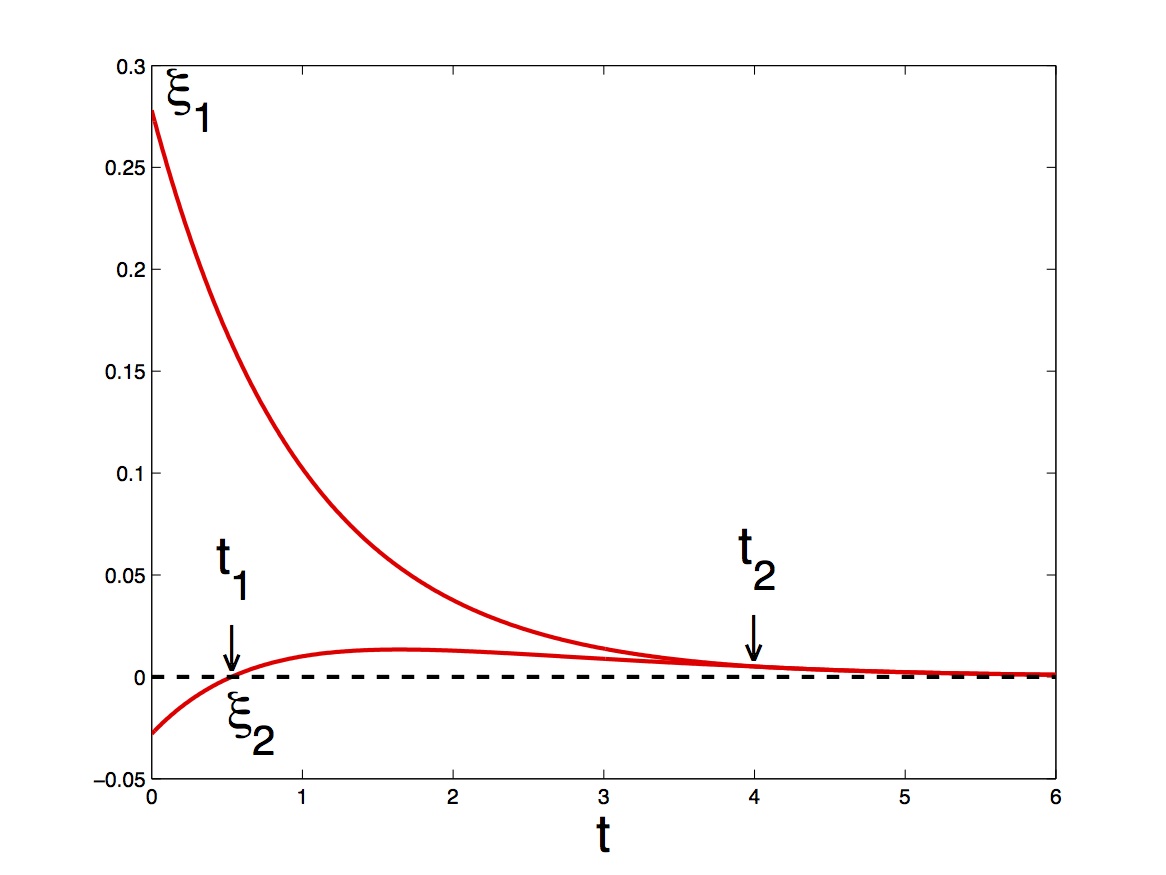}
                \caption{$t_2 \leq T$}
                \label{fig:2agents4}
        \end{subfigure}        
        \caption{Control strategies in the case $\xi_2(0)<0$ (controlled agents in red, uncontrolled ones in blue)}\label{fig:2agents}
        \end{center}
\end{figure}

\begin{proof}
See appendix. 
\end{proof}

\section{Final cost with any number of agents and control bounded by M=1} \label{Sec:gencase}

\subsection{Theroretical Analysis}

In this section, we address the optimal control problem of minimizing $\mathbb{V}(T)$ with any number of agents, setting the upper bound $M=1$ on the strength of the control, i.e. $\sum_{i=1}^N \alpha_i\leq 1$. We define the set of such controls:

\begin{equation} \label{setU}
\mathcal{U}=\Big\{\alpha:[0,T]\rightarrow [0,1]^N \Big | \; \alpha \text{ measurable, s.t. for all } t\in[0,T] \;  \sum\limits_{i=1}^N\alpha_i(t) \leq 1\Big\}.
\end{equation} \\
We
remind the equations governing the evolution of $\xi_i$ and $\bar{\xi}$  for $i\in\{1,...,N\}$:
\begin{equation}
\label{xidot}
\dot\xi_i=-\xi_i+(1-\alpha_i)\bar\xi \qquad \text{and} \qquad \dot{\bar{\xi}}=-(\sum_i\alpha_i) \; \bar{\xi}.
\end{equation}
As before, we aim to minimize the migration functional $\mathbb{V}=\frac1N\sum\limits_{i=1}^N\xi_i^2$ over the space $\mathcal{U}$ at final time:

\begin{prob}\label{prob_main}
Find $\arg\min\limits_{\alpha\in\mathcal{U}} \mathbb{V}(T).$
\end{prob}

Let us consider the restricted set of full-strength controls $\mathcal{U}_{FS}\subset\mathcal{U}$:
\begin{equation} \label{setUFS}
\mathcal{U}_{FS}=\Big\{\alpha:[0,T]\rightarrow [0,1]^N \Big | \; \alpha \text{ measurable, s.t. for all } t, \;  \sum\limits_{i=1}^N\alpha_i(t)= 1\Big\}.
\end{equation} 
We also introduce the set of optimal controls $\mathcal{U}_\text{opt}$:
\begin{equation} \label{setUopt}
\mathcal{U}_{\text{opt}}=\Big\{\alpha\in\mathcal{U}\; \text{ s.t. } \mathbb{V}_\alpha=\min\limits_{\beta\in\mathcal{U}}\mathbb{V}_\beta\Big\}.
\end{equation}
 A question then arises naturally: are there optimal controls among full-strength controls? In other words, we study the intersection $\mathcal{U}_\text{FS}\cap\mathcal{U}_\text{opt}$. To answer this, we first look for an optimal control strategy among the restricted set of controls $\mathcal{U}_\text{FS}$, i.e. we consider the problem:
\begin{prob} \label{prob_FS}
Find $\arg\min\limits_{\alpha\in\mathcal{U}_{FS}} \mathbb{V}(T).$
\end{prob}
Introducing the partial mean $\bar\xi_{1,l}=\frac1l \sum_{i=1}^l \xi_i$, we design the following optimal control strategy to solve Problem \ref{prob_FS}.

\begin{theorem}[Full-control strategy]\ \\ \label{th_strategy}
Let $T>0$. The strategy designed in Prop \ref{prop_inst} to decrease $\dot{\mathbb{V}}$ instantaneously is an optimal control strategy for Problem \ref{prob_FS}. It can be explicitly described as follows:\\
Define $t_1=0$ and for $l\in \{2,...,N\}, \; t_l = \frac{N}{N-1}\ln \left( (l-1)\frac{N-1}{N} \frac{\bar{\xi}_{1,l-1}(0)-\xi_{l}(0)}{\bar{\xi}(0)}+1 \right)$.\\
If there exists $l\in \{1,...,N-1\} \text{ such that } T\in [t_l, t_{l+1}[$, then any strategy satisfying: $\xi_i(T)=\bar\xi_{1,l}(T)$ for every $ i\in \{1,...,l\}$, $\sum_{i=1}^l\alpha_i\equiv 1$ and $\alpha_i\equiv 0$ for every $ i \in \{l+1,...,N\}$
 is optimal.\\
If $T\geq t_N$, then any strategy satisfying $\xi_i(T)=\bar{\xi}(T)$  for all $i\in \{1,...,N\}$ and $\sum_{i=1}^N\alpha_i\equiv 1$ is optimal.\\
For instance, if $T\in [t_l, t_{l+1}[$, one optimal strategy would consist in defining the following piecewise constant control:
\begin{equation}
 \forall k\leq l,\; \forall t\in [t_k,t_{k+1}[, \; 
\begin{cases}
\alpha_i(t)=\frac{1}{k}\; \text{if } i\leq k\\
\alpha_i(t)=0 \; \text{if } i>k.
\end{cases}
\end{equation}
\end{theorem}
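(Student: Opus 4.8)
The plan is to reduce Problem~\ref{prob_FS} to a finite‑dimensional Euclidean projection, read off its solution, and then check that the instantaneous‑decrease strategy of Proposition~\ref{prop_inst} realizes that solution. The first observation is that on $\mathcal{U}_{FS}$ one has $\sum_i\alpha_i\equiv 1$, so by \eqref{xidot} $\dot{\bar\xi}=-\tfrac1N\bar\xi$ and $\bar\xi(t)=\bar\xi(0)e^{-t/N}$ is the \emph{same} for every admissible control; by the decomposition \eqref{Vdecomp}, minimizing $\mathbb{V}(T)$ on $\mathcal{U}_{FS}$ is therefore equivalent to minimizing the spread $\sum_i(\xi_i(T)-\bar\xi(T))^2$. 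Setting $\eta_i=\xi_i-\bar\xi$ and $\mu_i=e^{t}\eta_i$, a short computation from \eqref{xidot} gives $\dot\eta_i=-\eta_i-\bar\xi(\alpha_i-\tfrac1N)$, hence $\dot\mu_i=\bar\xi(0)e^{(N-1)t/N}\big(\tfrac1N-\alpha_i\big)$, which is affine in $\alpha_i$; integrating over $[0,T]$ yields $\mu_i(T)=a_i-B_i$ where $a_i:=\xi_i(0)-\bar\xi(0)+\tfrac cN$, $B_i:=\bar\xi(0)\int_0^T e^{(N-1)s/N}\alpha_i(s)\,ds$ and $c:=\bar\xi(0)\tfrac{N}{N-1}\big(e^{(N-1)T/N}-1\big)$. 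Since $\sum_i a_i=\sum_i B_i=c$ (using $\sum_i\eta_i(0)=0$ and $\sum_i\alpha_i\equiv1$) the cross term cancels and $\mathbb{V}(T)=\bar\xi(T)^2+\tfrac{e^{-2T}}N\sum_i(a_i-B_i)^2$. Because for each $s$ the vector $(\alpha_i(s))_i$ runs over the standard simplex, the attainable set of $(B_i)_i$ is exactly $\{B\ge0:\sum_iB_i=c\}$ (one inclusion by convexity, the other by constant controls), so Problem~\ref{prob_FS} is equivalent to projecting $a$ — which by Hyp.~\ref{hyp_orderxi} is nonincreasing and satisfies $\sum_i a_i=c$ — onto $\{B\ge0:\sum_iB_i=c\}$ in the Euclidean norm.

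This projection is solved by the usual water‑filling: $B_i^\star=(a_i-\theta)_+$ with $\theta\ge0$ fixed by $\sum_i(a_i-\theta)_+=c$, and since $a$ is nonincreasing there is a unique $l^\star$ with $B_i^\star>0\iff i\le l^\star$, $\theta=-\tfrac1{l^\star}\sum_{i>l^\star}a_i$, $a_{l^\star}>\theta\ge a_{l^\star+1}$ (and $\theta=0,\ l^\star=N$ if all $a_i\ge0$). Translating back, the optimal end state has $\xi_i(T)=\bar\xi(T)+e^{-T}\theta=\bar\xi_{1,l^\star}(T)$ for $i\le l^\star$ and $\xi_j(T)=\bar\xi(T)+e^{-T}a_j$, the uncontrolled value, for $j>l^\star$. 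It then remains to show $l^\star=l\iff T\in[t_l,t_{l+1})$: $c$ and each $a_i$ increase with $T$, so $\theta$ decreases and $l^\star$ is nondecreasing in $T$; the transition $l^\star:\,l-1\to l$ occurs exactly when $a_l=-\tfrac1{l-1}\sum_{i\ge l}a_i$, and substituting $a_i=\xi_i(0)-\bar\xi(0)+c/N$ together with $\sum_{i<l}\xi_i(0)=(l-1)\bar\xi_{1,l-1}(0)$ and $\sum_i\xi_i(0)=N\bar\xi(0)$ gives $c=(l-1)\big(\bar\xi_{1,l-1}(0)-\xi_l(0)\big)$, i.e.\ $T=t_l$ by the definition of $c$; the case $l^\star=N$ yields $T\ge t_N$ the same way. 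Consequently any control with $\sum_{i\le l}\alpha_i\equiv1$, $\alpha_j\equiv0$ for $j>l$ and $\xi_i(T)=\bar\xi_{1,l}(T)$ for $i\le l$ attains $B=B^\star$ and is optimal, and all such controls produce the same $\xi(T)$ (the values $\xi_j(T)$, $j>l$, are forced, which pins down $\bar\xi_{1,l}(T)$).

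Finally I would verify that the piecewise‑constant control of the statement — which is the instantaneous‑decrease strategy of Proposition~\ref{prop_inst} for $M=1$ — realizes this state, by induction on the phase $k$: on $[t_k,t_{k+1})$ the agents $1,\dots,k$ are equal and controlled with weight $1/k$, so by an identity of the type \eqref{diffxi1} they remain merged at the common value $\bar\xi_{1,k}(t)$, which satisfies $\tfrac{d}{dt}\big(\bar\xi_{1,k}-\xi_{k+1}\big)=-\big(\bar\xi_{1,k}-\xi_{k+1}\big)-\tfrac1k\bar\xi<-\big(\bar\xi_{1,k}-\xi_{k+1}\big)$; integrating the explicit exponential solutions of \eqref{xidot} shows the gap $\bar\xi_{1,k}-\xi_{k+1}$ vanishes precisely at $t_{k+1}$, while the uncontrolled agents $\xi_{k+1}\ge\dots\ge\xi_N$ keep their order (their pairwise differences satisfy $\dot z=-z$) and the merged value stays above $\xi_{k+1}$ and positive (being an average of the largest values it is $\ge\bar\xi>0$), so Proposition~\ref{prop_inst} keeps controlling exactly this group; for $T\in[t_l,t_{l+1})$ the process ends at $\xi_1=\dots=\xi_l=\bar\xi_{1,l}(T)$ with $\alpha_j\equiv0$ for $j>l$, and for $T\ge t_N$ it is the terminal phase $\alpha_i\equiv1/N$. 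The conceptual content is the reduction to the convex projection; the main obstacle is purely computational — matching the water‑filling index $l^\star$ to the closed‑form thresholds $t_l$, and the inductive phase‑tracking of the greedy dynamics — which requires careful manipulation of the partial means $\bar\xi_{1,l}$ and of the explicit exponential solutions, but introduces no new idea.
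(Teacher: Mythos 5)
Your proposal is correct, and it takes a genuinely different route from the paper. The paper proves this theorem with the Pontryagin machinery of Section~\ref{Sec_opt}: using the transversality condition, the ordering of the covectors (Prop.~\ref{prop_equality}, Lemma~\ref{lemma_orderxi}) and explicit integration of \eqref{xidot}, it shows by contradiction that if $T\geq t_l$ any optimal control must merge agents $1,\dots,l$ at time $T$, and that if $T<t_{l+1}$ the agents $l+1,\dots,N$ are never controlled. You instead exploit the fact that on $\mathcal{U}_{FS}$ the mean $\bar\xi$ is control-independent, pass to $\mu_i=e^t(\xi_i-\bar\xi)$ so that the terminal spread depends on $\alpha$ only through the weighted integrals $B_i=\bar\xi(0)\int_0^T e^{(N-1)s/N}\alpha_i(s)\,ds$, identify the reachable set of $B$ as the scaled simplex $\{B\geq 0,\ \sum_iB_i=c\}$, and solve the resulting Euclidean projection by water-filling; the thresholds $t_l$ then fall out as the horizons at which the active set of the projection grows, and I checked that your algebra reproduces exactly the paper's formula for $t_l$. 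This reduction buys you existence and uniqueness of the optimal terminal state and a characterization of \emph{all} optimal controls ($B=B^\star$) without invoking the PMP or the covector analysis, whereas the paper's route reuses machinery that it also needs for Theorems~\ref{th_suffcond} and~\ref{th_inactivation}. The only step you leave as a sketch — the induction showing that the greedy piecewise-constant control of Prop.~\ref{prop_inst} merges agent $k+1$ exactly at $t_{k+1}$ — does go through (in the $\mu$ variables the merged block and $\mu_{k+1}$ meet precisely when $\int_0^t w = k\bigl(\bar\xi_{1,k}(0)-\xi_{k+1}(0)\bigr)$, i.e.\ at $t=t_{k+1}$, and $W(t_{l+1})-W(t_l)=l(\xi_l(0)-\xi_{l+1}(0))\geq 0$ guarantees the phases are well ordered), and the paper is no more explicit on this point; the only minor imprecision is the claim that $\theta$ decreases with $T$, but the monotonicity of the active-set size $l^\star$ — which is what you actually use — follows cleanly from the decrease of $\theta-c/N$.
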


\begin{proof}
Let us first show that if $T\geq t_l$, then the optimal control strategy for Problem \ref{prob_FS} must achieve $\xi_i(T)=\bar{\xi}_{1,l}(T)$ for all $i\in\{1,...,l\} $, reasoning by contradiction. \\
Suppose that there exists $k\in\{1,...,l\}$ such that $\xi_k(T)\neq \bar\xi_{1,l}(T)$. Using Hyp. \ref{hyp_orderxi}, we can suppose that there exists $m<l$ such that for every $i\in \{1,...,m\}, \; \xi_i(T)=\bar\xi_{1,m}(T)$, and for every $i\in \{1,...,m\}$ and $j\in \{m+1,...,N\}, \; \xi_j(T)<\xi_i(T)$.  

Let $j\in \{m+1,...,l\}$. The transversality condition (\ref{lambda_final}) gives:  for every $i\in \{1,...,m\}, \; \lambda_j(T)<\lambda_i(T)$. According to Proposition \ref{prop_equality}, for all $t\in [0,T]$, for every $i\in \{1,...,m\}, \; \lambda_j(t)<\lambda_i(t)$. According to the PMP, as seen in Section \ref{Sec_opt}, only the biggest covectors are controlled, and since $\alpha\in\mathcal{U}_\text{FS}$, with maximum control. So $\sum_{i=1}^m \alpha_i\equiv 1$ and $\alpha_j \equiv 0$.  
The evolutions of $\xi_j$ and $\bar\xi_{1,m}$ are then given by:
\begin{equation}
\begin{cases}
\dot{\xi}_j = -\xi_j+\bar\xi \\
\dot{\bar{\xi}}_{1,m} = -\bar{\xi}_{1,m} + \frac{m-1}{m}\bar{\xi}.
\end{cases}
\end{equation}
Since $\sum_{i=1}^N\alpha_i\equiv 1$, the evolution of the mean is given by $\dot{\bar\xi}=-\frac1N \bar\xi$, and we can compute $\bar\xi=\bar\xi(0) e^{-t/N}$, which in turn allows us to solve: 
\begin{equation}
\forall t\in [0,T], 
\begin{cases}
\xi_j(t)= e^{-t} \left( \xi_j(0) + \frac{N}{N-1}\bar\xi(0) (e^{\frac{N-1}{N}t}-1)\right) \\
 \bar\xi_{1,m}(t)= e^{-t} \left( \bar\xi_{1,m}(0) + \frac{m-1}{m}\frac{N}{N-1}\bar\xi(0) (e^{\frac{N-1}{N}t}-1)\right). \\
\end{cases}
\end{equation}
We get:
\begin{equation}
(\bar{\xi}_{1,m}-\xi_j)(T)= e^{-T} \left(\bar{\xi}_{1,m}(0)-\xi_j(0)-\frac1m \frac{N}{N-1}\bar{\xi}(0)(e^{\frac{N-1}{N}T}-1) \right).
\end{equation}
We made the hypothesis that $T\geq t_l = \frac{N}{N-1}\ln \left( (l-1)\frac{N-1}{N} \frac{\bar{\xi}_{1,l-1}(0)-\xi_{l}(0)}{\bar{\xi}(0)}+1 \right)$. Hence, 
\begin{equation}
\begin{split}
(\bar{\xi}_{1,m}-\xi_j)(T) & \leq e^{-T} \left(\bar{\xi}_{1,m}(0)-\xi_j(0)-\frac1m (l-1) (\bar{\xi}_{1,l-1}(0)-\xi_l(0)) \right) \\ 
& = \frac1m e^{-T} \left[ m \bar{\xi}_{1,m}(0) - m \xi_j(0)- (l-1)\bar{\xi}_{1,l-1}(0)+ (l-1)\xi_l(0) \right] \\
& \overset{(*)}{\leq} \frac1m e^{-T} \left[ \sum\limits_{i=1}^{m}\xi_i(0) - \sum\limits_{i=1}^{l-1}\xi_i(0) +(l-1-m)\xi_l(0) \right] \\
& = \frac1m e^{-T} \left[ - \sum\limits_{i=m+1}^{l-1} \xi_i(0) +(l-1-m)\xi_l(0) \right] \\
& \overset{(*)}{\leq} \frac1m e^{-T} \left[ -(l-1-m)\xi_l(0) +(l-1-m)\xi_l(0) \right] \\
& = 0,
\end{split}
\end{equation}
where inequalities $(*)$ derive from Hypothesis \ref{hyp_main}: since $j\leq l $, $\xi_j(0)\geq \xi_l(0)$. However, $(\bar{\xi}_{1,m}-\xi_j)(T)\leq 0$ contradicts that $\xi_j(T)<\xi_i(T)$ for every $i\in \{1,...,m\}$. 
From this we conclude that if $T\geq t_l$, then for every $ i\in \{1,...,l\}, \; \xi_i(T)=\bar{\xi}_{1,l}(T)$ for an optimal control strategy fulfilling Hypothesis \ref{hyp_orderxi}.

Let us now show that if $T<t_{l+1}$, then for every $k\in \{l+1,...,N\}, \; \alpha_i\equiv 0$ and $\xi_k(T)<\bar{\xi}_{1,l}(T)$.
\begin{equation}
\begin{split}
\bar{\xi}_{1,l}(T)-\xi_k(T) & \overset{(1)}{=} e^{-T} 
 \left( \bar{\xi}_{1,l}(0)-\xi_k(0)-\int_0^T e^{\frac{N-1}{N}s}(\frac{1}{l}\sum\limits_{j=1}^l \alpha_j - \alpha_k)(s)\bar{\xi}(0) ds \right) \\
 & \overset{(2)}{\geq} e^{-T} 
 \left( \bar{\xi}_{1,l}(0)-\xi_k(0)-\int_0^T e^{\frac{N-1}{N}s}\frac{1}{l}\bar{\xi}(0) ds \right) \\
 & = e^{-T} 
 \left( \bar{\xi}_{1,l}(0)-\xi_k(0)-\frac{N}{N-1}\frac{1}{l}\bar{\xi}(0) (e^{\frac{N-1}{N}T}-1) \right) \\
 & \overset{(3)}{>}   e^{-T} 
 \left( \bar{\xi}_{1,l}(0)-\xi_k(0)- (\bar{\xi}_{1,l}(0)-\xi_{l+1}(0) ) \right) \\
 & = e^{-T} 
 \left( \xi_{l+1}(0) -\xi_k(0) \right) \\
 & \overset{(4)}{\geq} 0, 
 \end{split}
\end{equation}
where: \\
(1) was computed using the evolutions of $\xi_k$ and $\bar{\xi}_{1,l}$: $\dot{\xi}_k=-\xi_k+(1-\alpha_k)\bar{\xi}$ and $\dot{\bar{\xi}}_{1,l} = -\bar{\xi}_{1,l} + (1-\frac{1}{l}\sum_{i=1}^l\alpha_i ) \bar{\xi}$,\\
(2) was obtained from inequalities $\sum_{j=1}^l \alpha_j(t)\leq 1$ and $\alpha_k(t)\geq 0$ for all $t$,\\
(3) comes from the inequality: $T<t_{l+1}=\frac{N}{N-1}\ln (\frac{N-1}{N}l\frac{\bar{\xi}_{1,l}(0)-\xi_{l+1}(0)}{\bar{\xi}(0)} +1)$, \\
(4) derives from Hypothesis \ref{hyp_main} since $k\geq l+1$. \\
Hence, for every $k\in \{l+1,...,N\}, \; \xi_k(T)\geq\bar{\xi}_{1,l}(T)$.
Furthermore, the transversality condition (\ref{lambda_final}) and Proposition \ref{prop_equality} imply that for all $t\in [0,T]$ for every $i\in \{1,...,l\}, \; \lambda_k(t)<\lambda_i(t)$ and the Pontryagin Maximum Principle as seen in Section \ref{Sec_opt} states that $\alpha_k\equiv 0$. So $\xi_k(T)\geq\bar{\xi}_{1,l}(T)$. 

We proved that if $T\in [t_l, t_{l+1}[$, then for every $i\in \{l+1,...,N\}$, $ \alpha_i\equiv 0$. Since $\bar{\xi}$ is fully determined as $\alpha\in\mathcal{U}_\text{FS}$, this means that for all $i\in \{l+1,...,N\}, \; \xi_i(T)$ is also fully determined (satisfying the equation $\dot{\xi}_i = -\xi_i + \bar\xi$ ). 
On the other hand, we proved that for all $i\in \{1,...,l\}, \; \xi_i(T)=\bar\xi_{1,l}(T)$ and that $\sum_{i=1}^l \alpha_i \equiv 1$, so $\bar{\xi}_{1,l}$ is also fully determined (satisfying the equation $\dot{\bar{\xi}}_{1,l}=-\bar{\xi}_{1,l}+\frac{l-1}{l}\bar{\xi}$ ).
Hence, any strategy such that for all $i\in \{1,...,l\}, \;\xi_i(T)=\xi_{1,l}(T)$ with $\sum_{i=1}^l\alpha_i\equiv 1$ and for all $i \in \{l+1,...,N\},\ \alpha_i\equiv 0$ is optimal for Problem \ref{prob_FS}.
\end{proof}

Notice that this optimal control strategy is not sparse, as control is split among more and more agents as time goes. However, it is not unique and one could very well act on one agent at a time until all reach the known final velocities.
Going back to the general Problem \ref{prob_main}, we prove that under certain conditions, the optimal control strategy uses full strength at all time, i.e. $\alpha^\text{opt}\in\mathcal{U}_{FS}$.

\begin{theorem}[Sufficient condition for full control]\ \\ \label{th_suffcond}
Define the time $t_N= \frac{N}{N-1}\ln\left(\frac{(N-1)^2}{N} \frac{\bar\xi_{1,N-1}(0)-\xi_N(0)}{\bar\xi(0)}+1\right)$ as in Theorem \ref{th_strategy}.\\
If $T\geq t_N$, then the optimal strategies $\alpha^\text{opt}$ to Problem \ref{prob_main}  belong to $\mathcal{U}_\text{FS}$ and for these controls $ \xi_i(T)=\bar{\xi}(T)$ for every $ i\in \{1,...,N\}$. 
  
\end{theorem}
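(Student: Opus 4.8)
The plan is to reduce Problem~\ref{prob_main} to Problem~\ref{prob_FS} under the hypothesis $T\ge t_N$ by showing that any optimal control for Problem~\ref{prob_main} must in fact lie in $\mathcal{U}_\text{FS}$, and then invoke Theorem~\ref{th_strategy} (with $l=N$, since $T\ge t_N$) to conclude that $\xi_i(T)=\bar\xi(T)$ for all $i$. The existence of an optimal control is guaranteed by the convexity remark following Proposition~\ref{prop_optstrat}, so let $\alpha^\text{opt}\in\mathcal{U}_\text{opt}$ with associated trajectory $\xi$ and covectors $\lambda$.

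The key observation is the decomposition $\mathbb{V}=\bar\xi^2+\frac1N\sum_i(\xi_i-\bar\xi)^2$ from \eqref{Vdecomp}. First I would argue that an optimal control must achieve consensus at time $T$, i.e. $\xi_i(T)=\bar\xi(T)$ for all $i$: comparing $\alpha^\text{opt}$ against the explicit full-strength strategy of Theorem~\ref{th_strategy}, which (since $T\ge t_N$) drives all $\xi_i$ to $\bar\xi(T)=\bar\xi(0)e^{-T/N}$ while achieving the maximal possible decrease of $\bar\xi$; any control in $\mathcal{U}$ satisfies $\dot{\bar\xi}=-(\sum_i\alpha_i)\bar\xi\ge -\bar\xi$, so $\bar\xi(T)\ge\bar\xi(0)e^{-T/N}$ for \emph{every} $\alpha\in\mathcal{U}$, with equality iff $\sum_i\alpha_i\equiv 1$. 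Thus the Theorem~\ref{th_strategy} strategy simultaneously minimizes both terms of \eqref{Vdecomp}, so its cost is a lower bound for $\mathbb{V}_{\alpha^\text{opt}}(T)$; optimality of $\alpha^\text{opt}$ forces equality in both terms, giving $\xi_i(T)=\bar\xi(T)$ for all $i$ and $\bar\xi(T)=\bar\xi(0)e^{-T/N}$.

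It then remains to upgrade "$\bar\xi(T)=\bar\xi(0)e^{-T/N}$" to "$\sum_i\alpha_i^\text{opt}(t)=1$ for almost all $t\in[0,T]$", i.e. $\alpha^\text{opt}\in\mathcal{U}_\text{FS}$. From $\dot{\bar\xi}=-(\sum_i\alpha_i)\bar\xi$ and $\bar\xi>0$ (Proposition~\ref{prop_positive}) we get $\bar\xi(T)=\bar\xi(0)\exp\!\big(-\int_0^T\sum_i\alpha_i(s)\,ds\big)$; since $\sum_i\alpha_i\le M=1$ pointwise and the integral equals $T$, we must have $\sum_i\alpha_i^\text{opt}(t)=1$ a.e., so $\alpha^\text{opt}\in\mathcal{U}_\text{FS}$. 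Finally, since $\alpha^\text{opt}\in\mathcal{U}_\text{FS}\cap\mathcal{U}_\text{opt}$, it is in particular optimal for the restricted Problem~\ref{prob_FS} (as $\min_{\mathcal{U}_\text{FS}}\ge\min_{\mathcal{U}}=\mathbb{V}_{\alpha^\text{opt}}(T)$, and $\alpha^\text{opt}$ attains the latter), and Theorem~\ref{th_strategy} with $T\ge t_N$ reconfirms $\xi_i(T)=\bar\xi(T)$ for all $i$.

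The main obstacle is the comparison argument in the second paragraph: one must be careful that minimizing $\bar\xi(T)$ and minimizing $\frac1N\sum_i(\xi_i-\bar\xi)^2$ at time $T$ are genuinely compatible, i.e. that the \emph{same} control realizes both minima. This is exactly what Theorem~\ref{th_strategy} provides for $T\ge t_N$ (its strategy uses full strength, hence minimizes $\bar\xi(T)$, and achieves complete consensus $\xi_i(T)=\bar\xi(T)$, hence annihilates the second term), so the compatibility is not automatic but is handed to us by the prior result; the remaining steps are then short Gr\"onwall-type manipulations. A minor subtlety worth addressing explicitly is that the consensus-forcing sub-argument (showing the second term of \eqref{Vdecomp} must vanish) should be phrased so it does not presuppose full control — it follows purely from "$\mathbb{V}_{\alpha^\text{opt}}(T)\le$ cost of the Theorem~\ref{th_strategy} strategy" together with the universal lower bound $\bar\xi(T)\ge\bar\xi(0)e^{-T/N}$ valid on all of $\mathcal{U}$.
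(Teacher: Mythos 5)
Your proposal is correct and follows essentially the same route as the paper: both rest on the decomposition $\mathbb{V}=\bar\xi^2+\frac1N\sum_i(\xi_i-\bar\xi)^2$ from \eqref{Vdecomp} and the observation that the Theorem~\ref{th_strategy} strategy simultaneously minimizes both terms over all of $\mathcal{U}$ (the first because full strength gives the universal lower bound $\bar\xi(T)\ge\bar\xi(0)e^{-T/N}$, the second because it vanishes when $T\ge t_N$), forcing any optimal control to match both, i.e.\ to achieve consensus and to lie in $\mathcal{U}_\text{FS}$. Your write-up merely makes explicit the ``equality forces $\sum_i\alpha_i\equiv1$ a.e.'' step that the paper leaves implicit; the only blemish is the displayed ODE $\dot{\bar\xi}=-(\sum_i\alpha_i)\bar\xi$, which omits the factor $\frac1N$ (as does the paper's own \eqref{xidot}), though your stated bound $\bar\xi(T)\ge\bar\xi(0)e^{-T/N}$ uses the correct dynamics.
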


\begin{proof}
If $T\geq t_N$, then the instantaneous decrease strategy designed in Theorem \ref{th_strategy} is optimal. Indeed, we noticed that the migration functional can be written as the sum of two terms (\ref{Vdecomp}): $\mathbb{V}=\bar\xi^2+\frac1N\sum(\xi_i-\bar\xi)^2$.
The strategy designed in Theorem \ref{th_strategy} minimizes $\bar{\xi}(T)$ by using full control at all time, hence minimizing $\bar{\xi}(T)^2$ since $\bar{\xi}>0$. Furthermore, it achieves $\xi_i(T)=\bar\xi(T)$ for all $i\in \{1,...,N\}$, thus minimizing the second term $\frac1N\sum(\xi_i-\bar\xi)^2$.
Hence any optimal control strategy has to use full control at all time and achieve $\xi_i(T)=\bar\xi(T)$ for every $i\in \{1,...,N\}$ in order to perform as well. 
\end{proof}

We finally address the general case stated in Problem \ref{prob_main}: minimize $\mathbb{V}(T)$ over the set of controls $\mathcal{U}$, for a given final time $T$. 
In the following theorem, we show the existence of an initial "Inactivation" time interval: the optimal strategy can require to let the system evolve freely (i.e. without control) at initial time, before acting on it with full strength. 

\begin{theorem}[Inactivation Principle] \label{th_inactivation}\ \\
If $T<t_N$, then one of the two holds: any control strategy $\alpha^{opt}$ either belongs to $\mathcal{U}_\text{FS}$ and the strategy designed in Theorem \ref{th_strategy} is optimal, or there exists some $\delta<T$ such that $\alpha^\text{opt}\equiv 0$ on $[0,\delta]$, and $\sum\alpha^\text{opt}_i \equiv 1$ on $[\delta, T]$.
\end{theorem}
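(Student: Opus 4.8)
The plan is to combine the Pontryagin Maximum Principle (Proposition \ref{prop_optstrat} and the trichotomy following it) with the structure of the covector dynamics \eqref{lambda}--\eqref{lambda_final} to show that, whenever the optimal control is not already full-strength, the set $I_\lambda^+(t)$ where the covectors are positive must be empty on an initial interval. First I would fix an optimal control $\alpha^\text{opt}$ with associated trajectory $\xi$ and covector $\lambda$, and invoke Hypothesis \ref{hyp_orderxi} together with \eqref{Condopt} so that $\lambda_1(t)\geq\cdots\geq\lambda_N(t)$ for all $t$. The key dichotomy is then whether $\lambda_1(t)>0$ for all $t\in[0,T]$ or not. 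In the first case, $I_\lambda^+(t)\neq\emptyset$ for every $t$, so by Proposition \ref{prop_optstrat} we have $\sum_j\alpha^\text{opt}_j(t)\geq\min(|I_\lambda^+(t)|,1)=1$, hence $\sum_j\alpha^\text{opt}_j\equiv 1$ and $\alpha^\text{opt}\in\mathcal{U}_\text{FS}$; in that situation Theorem \ref{th_strategy} applies and the full-control strategy is optimal (the claim that its final configuration coincides with that of any full-strength optimum follows from \eqref{Vdecomp} exactly as in the proof of Theorem \ref{th_suffcond}).

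The substance of the argument is the second case: $\lambda_1(\bar t)\leq 0$ for some $\bar t$. Here I would show $\lambda_1$ is monotone increasing wherever it is the unique largest covector. From \eqref{lambda}, $\dot\lambda_1 = \frac1N\sum_j\alpha_j\lambda_j - \bar\lambda + \lambda_1$, and since $\lambda_1\geq\lambda_i$ for all $i$ we have $\lambda_1\geq\bar\lambda$, while $\frac1N\sum_j\alpha_j\lambda_j\geq 0$ because only nonnegative $\lambda_j$'s are controlled (PMP trichotomy) — so $\dot\lambda_1\geq 0$, and in fact $\dot\lambda_1 = \lambda_1-\bar\lambda + (\text{nonneg}) > 0$ strictly as soon as not all covectors are equal, which cannot happen on a nontrivial interval unless $\lambda_1(T)=\cdots=\lambda_N(T)$, a case already absorbed into $\mathcal{U}_\text{FS}$ via $\xi_i(T)\equiv\bar\xi(T)$. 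Consequently $\lambda_1$ crosses zero at most once: there is a unique $\delta\in(0,T)$ (using $\lambda_1(T)=\frac2N\xi_1(T)>0$ from \eqref{lambda_final} and Proposition \ref{prop_positive}) with $\lambda_1<0$ on $[0,\delta)$, $\lambda_1(\delta)=0$, and $\lambda_1>0$ on $(\delta,T]$. On $[0,\delta)$ all $\lambda_i\leq\lambda_1<0$, so $I_\lambda(t)=\emptyset$ and Proposition \ref{prop_optstrat} forces $\alpha^\text{opt}\equiv 0$; on $(\delta,T]$, $I_\lambda^+(t)\neq\emptyset$ gives $\sum_j\alpha^\text{opt}_j\equiv 1$, i.e.\ $\alpha^\text{opt}\in\mathcal{U}_\text{FS}$ on that subinterval. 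Finally I would note $\delta<T$ (otherwise $\lambda_1(T)\leq 0$, contradiction) and that $\delta>0$ is exactly the content of the second case, completing the dichotomy.

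The main obstacle I anticipate is handling the boundary behavior of the covectors carefully — specifically ruling out pathological oscillation of $\lambda_1$ near a would-be zero and making the "$\dot\lambda_1>0$ strictly" claim rigorous when ties $\lambda_i=\lambda_j$ occur. Proposition \ref{prop_equality} is the crucial tool: equalities among covectors, once present, persist for all time, so on any subinterval where $\lambda_1$ is not strictly the largest it is identically equal to the others there and hence everywhere, which collapses to the $\mathcal{U}_\text{FS}$ case with $\xi_i(T)\equiv\bar\xi(T)$. A secondary point requiring care is that the statement allows $\delta$ to be arbitrarily small (possibly the full-control case is just $\delta=0$), so I would phrase the conclusion as the stated disjunction rather than asserting a strictly positive Inactivation interval in all cases; the genuine positivity of $\delta$ in concrete instances is what the two-agent Theorem \ref{Th_M1}(ii) already exhibits by explicit computation.
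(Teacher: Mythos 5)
Your proposal is correct and follows essentially the same route as the paper's proof: order the covectors via Hypothesis \ref{hyp_orderxi} and \eqref{Condopt}, observe $\dot\lambda_1=\frac1N\sum_j\alpha_j\lambda_j+(\lambda_1-\bar\lambda)\geq 0$ with strict inequality unless all covectors coincide (a case that collapses, via Proposition \ref{prop_equality} and the linear equation for $\bar\lambda$, into the full-control alternative), so that $\lambda_1$ vanishes at most once and the sign of $\lambda_1$ yields the stated dichotomy through Proposition \ref{prop_optstrat}. No gaps; the handling of ties among covectors and of the degenerate case $\delta=0$ matches the paper's treatment.
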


\begin{proof}
According to Hypothesis \ref{hyp_orderxi}, we can assume that $\xi_1(T)\geq\xi_i(T)$ for every $i\in \{1,...,N\}$. Furthermore, $\bar\xi(T)>0$, so $\xi_1(T)>0$. From the transversality condition (\ref{lambda_final}) we deduce: $\lambda_1(T)\geq\lambda_i(T)$ for every $i\in \{1,...,N\}$ and $\lambda_1(T)>0$. From Prop. \ref{prop_equality}, we know that for all $t\in [0,T], \; \lambda_1(t)\geq\lambda_i(t)$. According to Prop. \ref{prop_optstrat}, 
full control is used at time $t$ if $\lambda_1(t) > 0$ and no control is used if $\lambda_1(t)<0$. 
Let us study the evolution of $\lambda_1$: $\dot{\lambda}_1=\frac1N\sum\alpha_j\lambda_j-\bar\lambda+\lambda_1$. By the Pontryagin maximum principle, we always have $\sum\alpha_j\lambda_j\geq 0$. 
Furthermore, $\lambda_1-\bar\lambda\geq 0$. So $\dot{\lambda}_1(t)\geq 0$ for all $t\in [0,T]$. 
We show that $\lambda_1=0$ at most at one point. Indeed, suppose that $\lambda_1(\tau)=0$ for some $\tau\in[0,T]$ and that $\dot{\lambda}_1(\tau)=0$.
 Then $\dot{\lambda}_1(\tau)=-\bar{\lambda}(\tau)$ so $\bar{\lambda}(\tau)=\lambda_1(\tau)=0$, and since the $\lambda_i$'s are ordered, $\lambda_i(\tau)=\bar{\lambda}(\tau)$ for every $i\in \{1,...,N\}$. According to Proposition \ref{prop_equality},  $\lambda_i(t)=\bar{\lambda}(t)$ for all time $t$ and every $i$. Since $\lambda_1(T)>0$, there exists a time interval $[\tau^*, T]$ such that $\lambda_1(t)>0$ for all $t\in[\tau^*,T]$. On this interval, $\dot{\lambda}_1=\frac{1}{N}\lambda_1\sum_j\alpha_j=\frac{1}{N} \lambda_1$, which gives: $\lambda_1(T)=\lambda_1(\tau^*)e^{\frac{1}{N}(T-\tau^*)}$. This contradicts the existence of a time $\tau$ at which $\lambda_1(\tau)=0$. In conclusion, if $\lambda_1(\tau)=0$, then $\dot{\lambda}_1(\tau)>0$ so $\lambda_1=0$ at most at one point. \\
Hence, there is a dichotomy of cases: \\
Either $\lambda_1(t)\geq 0$ for all time, so $I(t)\neq\emptyset$ for all $t$, which implies that $\alpha^\text{opt}\in\mathcal{U}_\text{FS}$ according to Prop. \ref{prop_optstrat}. In this case, $\arg\max_{\alpha
\in\mathcal{U}}\mathbb{V}=\arg\max_{\alpha
\in\mathcal{U}_\text{FS}}\mathbb{V}$ and the control strategy designed in Theorem \ref{th_strategy} for Problem \ref{prob_FS} is optimal also for Problem \ref{prob_main}. \\
 Or there exists $\delta\in[0,T]$ such that $\lambda_1(t)< 0$ on $[0,\delta[$ and $\lambda_1(t)\geq 0$ on $[\delta, T]$, which implies that $\alpha(t) \equiv 0 \text{ on } [0,\delta] \text{ and } \sum\alpha_i(t) \equiv 1 \text{ on } ]\delta, T]$. Practically, an optimal control strategy would consist in letting the system evolve without control on $[0,\delta[$. Then the full-control strategy from Theorem \ref{th_strategy} can be applied on $[\delta, T]$ with the new initial positions $\xi(\delta)$. 
\end{proof}

\begin{remark}
Although this result may seem counter-intuitive, in certain cases it makes sense to let the system evolve freely, at least initially. Indeed, without control the system naturally regroups in order to reach consensus, minimizing $\sum_{i=1}^N(\xi_i-\bar{\xi})$ in (\ref{Vdecomp}), but keeping $\bar{\xi}$ constant. Actual examples of such cases are shown in the next section. 
\end{remark}

\begin{remark}
Note that a constraint $M<1$ would not change the nature of the results. It would only mean acting with less strength on the controlled agents, therefore changing the values of the times $t_l$ defined in Theorem \ref{th_strategy}, but the optimal control strategy would be unchanged.
With a constraint $M>1$, we can expect results similar to those of Section \ref{Sec:2agents}, with two kinds of Inactivation periods, consisting either in letting the system evolve freely, or in controlling it with a non-maximal total strength $0<\sum_i\alpha_i<M$ (see Theorem \ref{Th_M2} (ii) and (iii)). 
\end{remark}

\subsection{Practical Approach} \label{Sec_gen_prac}

We proved in the previous section that the optimal strategy can either be to act with full control as in Theorem \ref{th_strategy}, or to let the system evolve without control on some time interval $[0,\delta]$, before acting with full control on $]\delta,T]$. In this section, we explore the practicality of Inactivation strategies.

 First, we run numerical simulations to find cases in which the optimal strategy involves Inactivation. The migration functional $\mathbb{V}_\delta$ can be computed explicitly as a function of $\delta$. We then look for the value of $\delta$ that minimizes $\mathbb{V}_\delta(T)$. 
Let us denote by $\xi^\delta$ the solution to system (\ref{xidot}) when no control is applied on $[0,\delta]$ and full control is used on $]\delta,T]$. Equation (\ref{xidot}) gives:
\begin{equation}
\begin{cases}
\dot{\xi}^\delta_i=-\xi^\delta_i+\bar\xi^\delta \\
\dot{\bar{\xi}}^\delta=0
\end{cases}
\quad \text{ on } [0,\delta], \;
\end{equation}
which allows us to solve:
$\xi^\delta_i(\delta)=e^{-\delta}\left(\xi^\delta_i(0)+\bar{\xi}^\delta(0)(e^{\delta}-1)\right)$.
We then apply the strategy designed in Theorem \ref{th_strategy} with the new initial conditions $\xi^\delta(\delta)$ and the new final time $T-\delta$. Define the times $t^\delta_1=0$ and for $l\in \{2,...,N\}, \; t^\delta_l = \frac{N}{N-1}\ln \left( (l-1)\frac{N-1}{N} \frac{\bar{\xi}^\delta_{1,l-1}(\delta)-\xi^\delta_{l}(\delta)}{\bar{\xi}^\delta(\delta)}+1 \right)$.
Find $l\in \{1,...,N-1\}, \text{ such that } T-\delta\in [t^\delta_l, t^\delta_{l+1}[$. Then any strategy satisfying $\xi^\delta_i(T)=\bar\xi^\delta_{1,l}(T)$ for every $ i\in \{1,...,l\}$, $\sum_{i=1}^l\alpha_i(t)= 1$ for all $ t\in [\delta,T]$,   and $\alpha_i\equiv 0$ for every $i \in \{l+1,...,N\}$ is optimal.
From equation (\ref{xidot}) we get: 
\begin{equation}
\begin{cases}
\dot{\bar{\xi}}^\delta_{1,l}=-\bar\xi^\delta_{1,l}+\frac{l-1}{l}\bar\xi^\delta \\
\dot{\xi}^\delta_{i}=-\xi^\delta_{i}+\bar\xi^\delta \; \; \text{ for } i\in \{l+1,...,N\}\\
\dot{\bar{\xi}}^\delta=-\frac{1}{N}\bar\xi^\delta 
\end{cases}
\quad \text{ on } [\delta,T],
\end{equation}
from which we can solve:
\begin{equation}
\begin{cases}
 \xi^\delta_i(T)=\bar\xi^\delta_{1,l}(T)=e^{-(T-\delta)}\left(\bar\xi^\delta_{1,l}(\delta)+\frac{l-1}{l}\frac{N}{N-1}\bar\xi^\delta(0)(e^{\frac{N-1}{N}(T-\delta)}-1)\right) \quad \text{ for all } i\in \{1,...,l\}, \\
\xi^\delta_i(T)=e^{-(T-\delta)}\left(\xi^\delta_i(\delta)+\frac{N}{N-1}\bar\xi^\delta(0)(e^{\frac{N-1}{N}(T-\delta)}-1)\right) \quad \text{ for all } i\in \{l+1,...,N\}.
\end{cases}
\end{equation}
We can now compute $\mathbb{V}^\delta(T)=\frac1N\sum\limits_{i=1}^N\xi_i^\delta(T)^2$ and numerically look for $\min\limits_{\delta\in[0,T]} \mathbb{V}^\delta(T)$ (see Figure \ref{fig:inactiontime}).

\begin{figure}[h!]
\begin{center}
\includegraphics[trim=0cm 0cm 0cm 0cm, clip=true, scale=0.5]{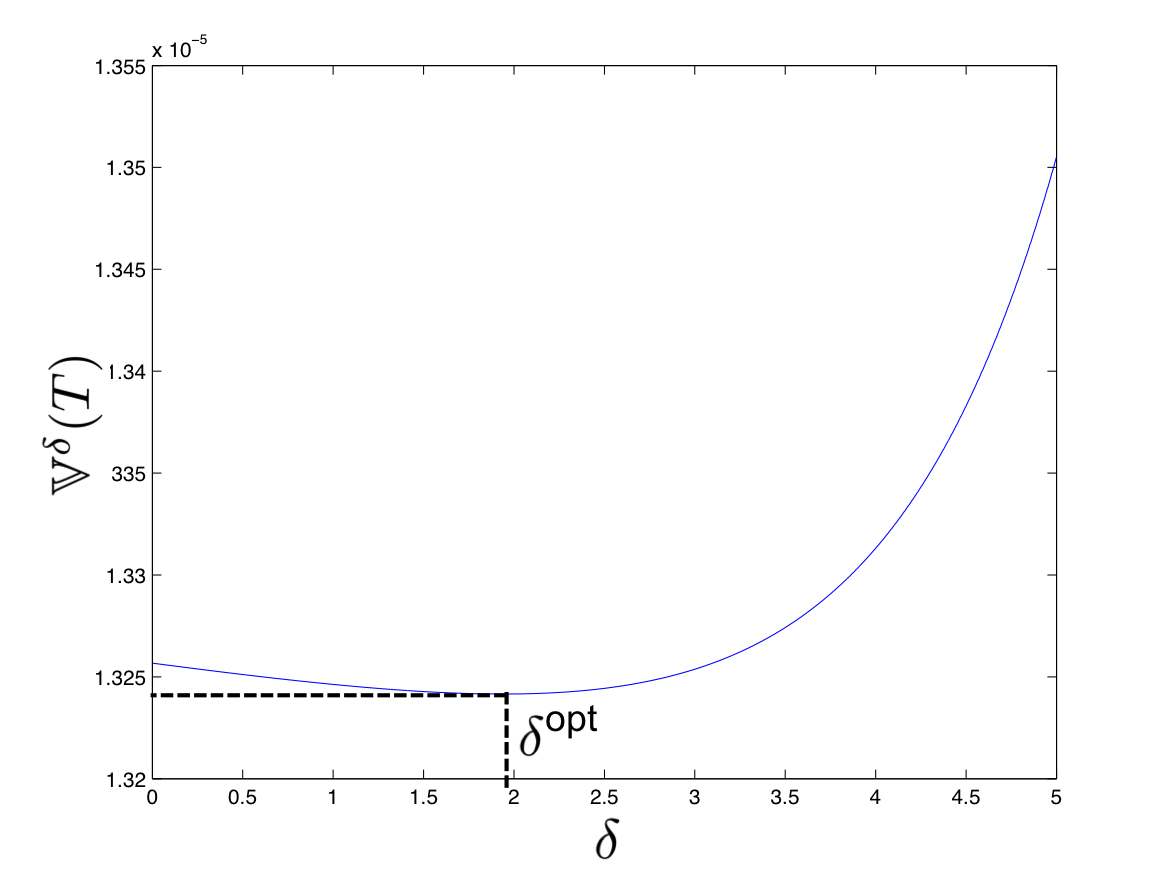}
\end{center}
\caption{$V^\delta(T)$ with respect to Inactivation time $\delta$. Here the optimal Inactivation time is $\delta=1.94$.}
\label{fig:inactiontime}
\end{figure}

Series of simulations were run to look for cases in which $\delta>0$. Table \ref{table_numcases} 
lists the percentage of such cases found over 1000 simulations, for different values of the number of agents and of the final time.
Initial projected variables $\xi_i(0)$ were chosen randomly in the interval $[-1,1]$ and such that the mean $\bar{\xi}$ is strictly positive. 
 As expected (and proven in Theorem \ref{th_suffcond}), for larger values of $T$, it is always optimal to act with full control at all time (in other words $\delta=0$). One also notices that as the number of agents increases, "Inactivation" cases become less and less frequent. 

\begin{table}[h!] 
\begin{center}
  \begin{tabular}{| c | c | c | c | c |}
    \hline 
    Number of agents & 5 & 10 & 20 & 50 \\ \hline \hline
    T=3 & 1.6 \%  & 0.9 \%  & 0 & 0 \\ \hline
    T=4 & 1.8 \%  & 0.7 \%  & 0.3 \%  & 0 \\ \hline
    T=5 & 1.0 \%  & 0.2 \%  & 0.2 \%  & 0 \\ \hline
    T=6 & 0.2 \%  & 0.1 \%  & 0 & 0.1 \%  \\ \hline
    T=7 & 0 & 0 & 0 & 0 \\ 
    \hline
  \end{tabular}
\end{center}
\caption{Percentage of cases in which $\delta>0$ out of 1000 simulations. $\xi_i(0)$ chosen randomly in $[-1, 1]$.}
\label{table_numcases}
\end{table}

  
Table \ref{table_V} shows the average of the relative difference $\frac{\mathbb{V}_\text{fc}-\mathbb{V}^\delta}{\mathbb{V}_\text{fc}}$, where $\mathbb{V}^\delta$ was obtained by using optimal control and $\mathbb{V}_{\text{fc}}$ by using full control at all time (as designed in Theorem \ref{th_strategy}). The gain in performance when using the optimal strategy is minor (significantly less than 1\% in most cases), and decreases as the number of agents increases. 

\begin{table}[h!]
\begin{center}
  \begin{tabular}{| c | c | c | c | c |}
    \hline 
    Number of agents & 5 & 10 & 20 & 50 \\ \hline \hline
    T=3 & 0.073\% & 0.001\% & - & - \\ \hline
    T=4 & 0.27\% & 0.018\% & 0.001\% & - \\ \hline
    T=5 & 0.91\% & 0.056\% & 0.0069\% & - \\ \hline
    T=6 & 1.53\% & 0.2\% & - & 0.00003 \% \\ 
    \hline
  \end{tabular}
\end{center} 
\caption{Average relative improvement of $\mathbb{V}^\delta$ w.r.t. $\mathbb{V}_{\text{fc}}$ }
\label{table_V}
\end{table}

The occurrence of Inactivation cases can be explained by looking at the two terms in the migration functional $\mathbb{V}=\bar\xi^2+\frac1N\sum(\xi_i-\bar\xi)^2$ (\ref{Vdecomp}). When $\bar{\xi}^2$ is small, the control strategy should concentrate on minimizing the second term $\frac1N\sum(\xi_i-\bar\xi)^2$, which does not necessarily require full control since the system naturally evolves to minimize this term. 
To confirm this reasoning, we look at the ratio $R:=(\frac1N\sum(\xi_i-\bar\xi)^2)/\bar{\xi}^2$ in one set of simulations ($N=5$, $T=3$) and find that the Inactivation cases correspond exactly to the largest values of $R$. Furthermore, the larger the ratio, the longer the Inactivation interval (see Figure \ref{fig:RatioInactivation}). 

 \begin{figure}[h!]
\begin{center}
\includegraphics[trim=0cm 0cm 0cm 0cm, clip=true, scale=0.6]{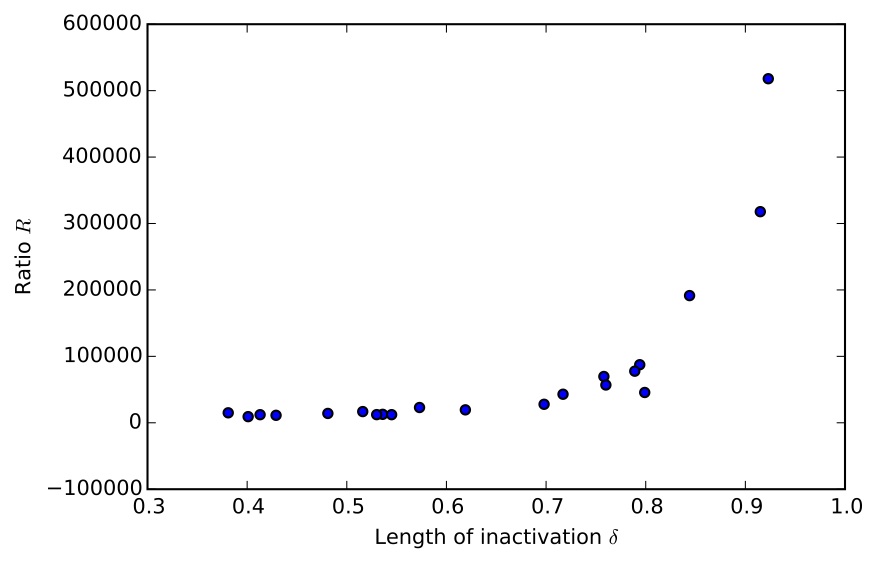}
\end{center}
\caption{Ratio $R:=(\frac1N\sum(\xi_i-\bar\xi)^2)/\bar{\xi}^2$ as a function of the length of the Inactivation interval $\delta$, for 20 simulations involving Inactivation with $N=5$ and $T=3$. The Inactivation $\delta$ increases as $\bar{\xi}^2$ tends to zero.}
\label{fig:RatioInactivation}
\end{figure}


Hence, $\mathcal{U}_\text{opt}\cap\mathcal{U}_\text{FS}=\emptyset$ occurs in very few cases, namely those in which $\bar\xi^2\ll\frac1N\sum(\xi_i-\bar\xi)^2$. Furthermore, when Inactivation exists, the gain in performance compared to the full control strategy is very minor. For reasons of computational speed and complexity, it is very reasonable to neglect those cases and to apply the full control strategy at all time. 

Figure \ref{fig:strategy} shows the evolution of the projected velocities $\xi_i, \; i\in \{1,...,10\}$ with respect to time, in a case where the optimal strategy requires full control at all time, with $T>t_{10}$. The control function is the one designed in Theorem \ref{th_strategy} and acts first on $\xi_1$, then on $\xi_1$ and $\xi_2$, and so on until all have reached consensus (in terms of the projected velocities $\xi_i$), at which point it acts with equal strength on all agents to drive $\bar\xi$ down to $0$. 

 \begin{figure}[h!]
\begin{center}
\includegraphics[trim=1cm 0.5cm 0cm 0cm, clip=true, scale=0.4]{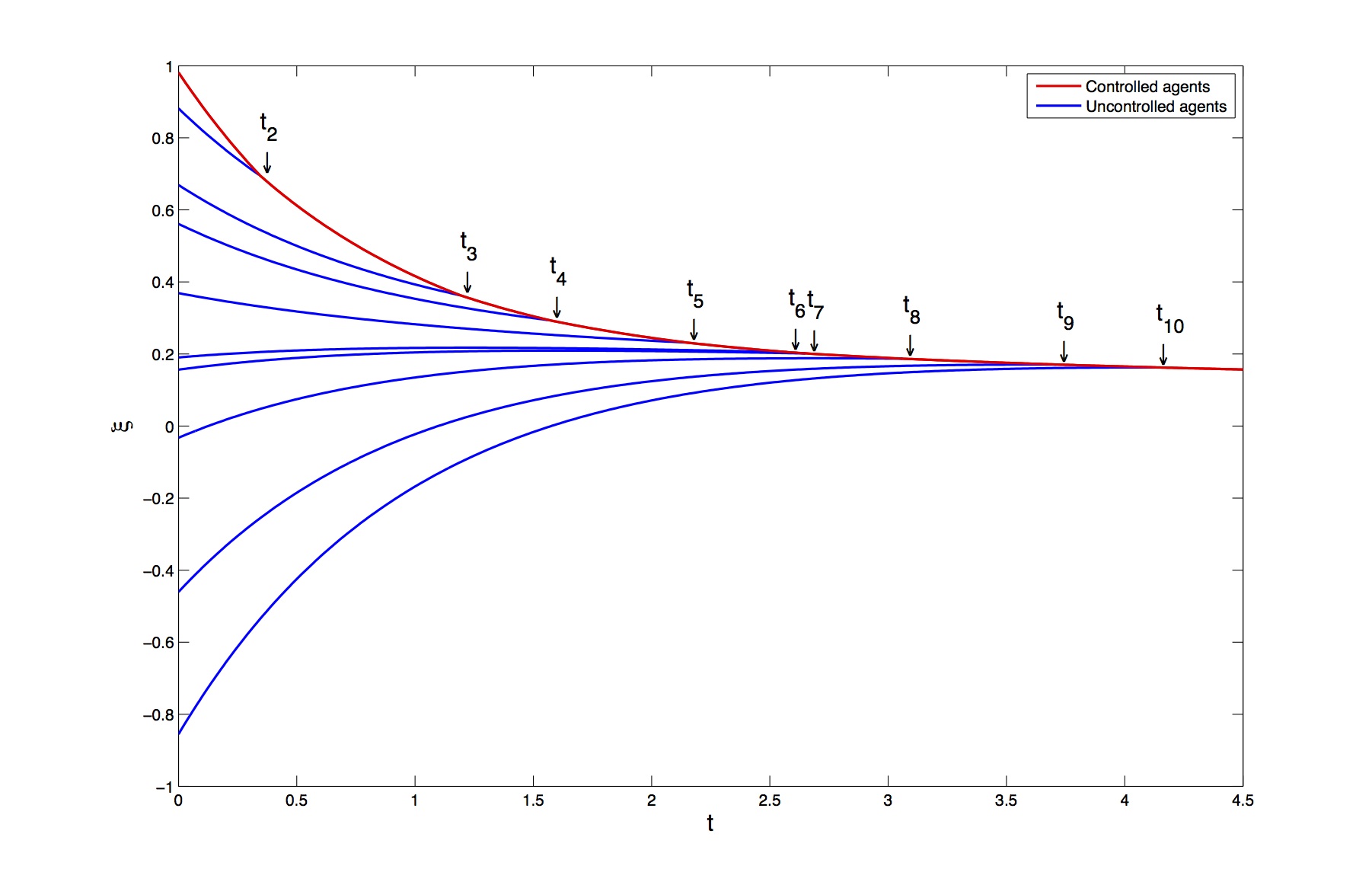}
\end{center}
\caption{Evolution of the projected velocities $\xi_i$ with the full strength optimal control for a system of $10$ agents. In this example $\bar{\xi}(0)=0.25$ so full control at all time is needed to drive $\bar{\xi}$ to the desired velocity $V=0$ (i.e. $\delta=0$). At final time $T=4.5$ the system has reached consensus, but not yet at the desired velocity.}
\label{fig:strategy}
\end{figure}

\section{Optimal control for integral cost}\label{Sec:int_cost}

In this section we focus on 
minimizing the integral of the migration functional, with the constraint on the controls $M=1$.
As done in Section \ref{Sec:gencase}, we define two problems (where $\mathcal{U}$ (\ref{setU}) and $\mathcal{U}_{\text{FS}}$ (\ref{setUFS}) are defined as before).
\begin{prob}\label{prob_main2}
Find $\arg\min\limits_{\alpha\in\mathcal{U}} \int_0^T \mathbb{V}(t) dt.$
\end{prob} 

\begin{prob}\label{prob_FS2}
Find $\arg\min\limits_{\alpha\in\mathcal{U}_{FS}} \int_0^T \mathbb{V}(t) dt.$
\end{prob}

\subsection{Pontryagin's Maximum Principle}\label{Sec:PMPint}

We first prove general results, with the aim of solving Problem \ref{prob_main2}. In order to use Pontryagin's maximum principle, we introduce the new Hamiltonian $H=\langle \lambda, f\rangle + \lambda^0\mathbb{V}$ 
and the equations governing the covectors' evolution $\dot\lambda_i=-\frac{\partial H}{\partial \xi_i}$. Considering normal trajectories, we set $\lambda^0=1$ and obtain: 
\begin{equation}\label{Hlambda_int}
\begin{cases}
H=\sum\limits_{i=1}^N(-\lambda_i\xi_i)+\bar\xi\sum\limits_{i=1}^N(1-\alpha_i)\lambda_i+\sum\limits_{i=1}^N\xi_i^2 \\
\dot\lambda_i=\lambda_i-\frac1N\sum_j (1-\alpha_j)\lambda_j - 2\xi_i .
\end{cases}
\end{equation} 
Since the final condition is not fixed, we have the following transversality condition for the covectors: 
\begin{equation}\label{lambda_final2}
\lambda(T)=0.
\end{equation}
As in the minimization of the migration functional at final time (Section \ref{Sec_opt}), we define $I_\lambda$ and $I_\lambda^+$ (see \eqref{setI}). Then minimizing $H=\sum_{i=1}^N-\alpha_i\lambda_i + \tilde{H}$ (where $\tilde{H	}$ contains only uncontrolled terms) requires the following : if $k \not\in I_\lambda$, $\alpha_k=0$; furthermore, if $I_\lambda^+\neq\emptyset$, then $\sum_{i\in I_\lambda^+} \alpha_i =1$.

As in Section \ref{Sec:gencase}, we make Hypothesis \ref{hyp_main}. Given the initial order on the agents' projected velocities $\xi_i$, we prove the following:

\begin{lemma}
There exists an optimal control strategy satisfying:
\begin{equation}\label{orderxi2}
\forall t\in [0,T], \; \forall i,j\in\{1,...,N\}, \; i<j \Rightarrow \xi_i(t)\geq\xi_j(t).
\end{equation}
\end{lemma}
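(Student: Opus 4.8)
The plan is to mimic the proof of Lemma~\ref{lemma_orderxi} almost verbatim, since the integral cost shares the key feature that makes that argument work: swapping the controls of two agents that cross each other swaps their trajectories but leaves the cost unchanged. Concretely, fix an optimal control $\alpha\in\mathcal{U}$ with trajectory $\xi^\alpha$ and migration functional $t\mapsto\mathbb{V}_\alpha(t)$, and set
\[
\tau=\sup\Big\{t\in[0,T]\;\Big|\;\exists\,\beta\in\mathcal{U}\text{ with }\textstyle\int_0^T\mathbb{V}_\beta(s)\,ds=\int_0^T\mathbb{V}_\alpha(s)\,ds\text{ and }\xi^\beta\text{ satisfies }\eqref{orderxi2}\text{ on }[0,t]\Big\}.
\]
The goal is to show $\tau=T$ by contradiction. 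If $\tau<T$, then (as in Lemma~\ref{lemma_orderxi}) there is a control $\beta$ achieving the optimal integral cost and satisfying \eqref{orderxi2} on $[0,\tau]$, and a pair $i<j$ with $\xi^\beta_i(\tau)=\xi^\beta_j(\tau)$ but $\xi^\beta_j(t)>\xi^\beta_i(t)$ on $]\tau,\tau+\delta]$ for some $\delta>0$.

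The main step is the swap: define $\tilde\beta$ by exchanging the $i$-th and $j$-th components of $\beta$ on $[\tau,T]$ (i.e. $\tilde\beta_i=\beta_j$, $\tilde\beta_j=\beta_i$, and $\tilde\beta_k=\beta_k$ otherwise), keeping $\tilde\beta=\beta$ on $[0,\tau]$. Since $\xi^\beta_i(\tau)=\xi^\beta_j(\tau)$ and the dynamics \eqref{xidot} for $\xi_i$ depend on $i$ only through $\alpha_i$ and the common mean $\bar\xi$ (which is unchanged because $\sum_k\tilde\beta_k=\sum_k\beta_k$), one checks by uniqueness of solutions that $\xi^{\tilde\beta}_i(t)=\xi^\beta_j(t)$ and $\xi^{\tilde\beta}_j(t)=\xi^\beta_i(t)$ for all $t\in[\tau,T]$, while $\xi^{\tilde\beta}_k=\xi^\beta_k$ for $k\neq i,j$. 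Hence the multiset $\{\xi^{\tilde\beta}_k(t)\}_{k=1}^N$ equals $\{\xi^\beta_k(t)\}_{k=1}^N$ for every $t$, so $\mathbb{V}_{\tilde\beta}(t)=\frac1N\sum_k\xi^{\tilde\beta}_k(t)^2=\mathbb{V}_\beta(t)$ pointwise, and therefore $\int_0^T\mathbb{V}_{\tilde\beta}=\int_0^T\mathbb{V}_\beta$ — this is where the integral cost (being a pointwise-invariant functional of the trajectory) behaves exactly like the final cost. Moreover $\tilde\beta$ now has $\xi^{\tilde\beta}_i\geq\xi^{\tilde\beta}_j$ on $[\tau,\tau+\delta]$.

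Finally, I would note that only finitely many pairs $(m,n)$ with $m<n$ can have $\xi^\beta_m<\xi^\beta_n$ on $]\tau,\tau+\delta']$ for small $\delta'$, so performing the swap for each such pair (shrinking $\delta$ if necessary) produces a control $\tilde\beta\in\mathcal{U}$ with the same integral cost satisfying \eqref{orderxi2} on all of $[0,\tau+\delta]$, contradicting the definition of $\tau$. Thus $\tau=T$, which is the claim. I do not expect a genuine obstacle here: the only point requiring a little care is verifying that the swapped control stays in $\mathcal{U}$ (immediate, since the constraint $\sum_k\alpha_k\le1$ and the box constraints are symmetric under permuting components) and that the trajectory-swap identity holds despite the crossing structure — which follows from the fact that at the crossing time the two relevant states agree, so the swapped initial data at $\tau$ is literally the same, and the ODE is autonomous in the relevant sense with a shared coupling term $\bar\xi$.
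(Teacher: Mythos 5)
Your proposal is correct and follows essentially the same route as the paper: the paper's proof likewise defines $\tau$ as the supremum of times up to which an equal-cost control preserves the ordering, performs the same component swap of the controls at a crossing time, and uses the pointwise identity $\mathbb{V}_{\tilde\beta}(t)=\mathbb{V}_\beta(t)$ (hence equality of the integrals) to derive the contradiction. No gaps.
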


\begin{proof}
The proof is very similar to that of Lemma \ref{lemma_orderxi}.
Consider an optimal control strategy $\alpha\in \mathcal{U}$. \\
Define $\tau=\sup\{t \; | \; \exists \beta\in \mathcal{U} \text{ s.t. } \int_0^T\mathbb{V}_{\beta}(s)ds=\int_0^T\mathbb{V}_\alpha(s)ds \text{ and } \xi^\beta \text{ satisfies } (\ref{orderxi2}) \text{ on } [0,t]\}$.
Let us prove by contradiction that $\tau=T$. Suppose that $\tau<T$.
Then there exist $i,j \in \{1,...,N\}$ with $i<j$ such that $\xi_i^\beta(\tau)=\xi^\beta_j(\tau)$ and $\xi^\beta_j(t)>\xi^\beta_i(t)$ on $]\tau, \tau+\delta]$ for some $\delta>0$.
Design a control strategy $\tilde{\beta}$ such that on $[\tau,T], \; \tilde{\beta}_i=\beta_j, \;  \tilde{\beta}_j=\beta_i $ and for every $ k\in \{1,...,N\}\setminus\{i,j\}, \; \tilde{\beta}_k=\beta_k$. 
Then for all $ t\in [\tau, T], \; \xi^{\tilde{\beta}}_i(t)=\xi^\beta_j(t), \;$
$ \text{ and } \xi^{\tilde{\beta}}_j(t)=\xi^\beta_i(t)$. 
So for all  $t\in[\tau, \tau+\delta], \; \xi^{\tilde{\beta}}_i(t)\geq\xi^{\tilde{\beta}}_j(t) \text{ and for all } t\in [0,T], \; \mathbb{V}^{\tilde{\beta}}(t)=\mathbb{V}^\beta(t)$.
Proceeding likewise for every pair of indices $(m,n)$ satisfying $m<n$ and $\xi^\beta_n(t)>\xi^\beta_n(t)$ on $]\tau, \tau+\delta]$ we are able to design a control strategy $\tilde{\beta}$ satisfying
(\ref{orderxi2}) on $[0,\tau+\delta]$ and $\int_0^T\mathbb{V}_{\beta}(t)dt=\int_0^T\mathbb{V}_\alpha(t)dt$, which contradicts the definition of $\tau$.
In conclusion, $\tau=T$, i.e. for all $ t\in [0,T]$, for every $i, j\in \{1,...,N\}, \; i<j \Rightarrow \xi_i(t)\geq\xi_j(t)$.
\end{proof}

Hence, as in Section \ref{Sec:gencase}, we can assume Hypothesis \ref{hyp_orderxi}: 
for all $t\in [0,T]$, if $i<j$, then $\xi_i(t)\geq\xi_j(t)$.
By the following proposition, we shall prove that the same order is observed among the covectors $\lambda_i$. 

\begin{prop}\label{prop_orderlambda2}
\begin{equation}
\forall t\in[0,T], \; i<j \Rightarrow \lambda_i(t)\geq\lambda_j(t).
\end{equation}
\end{prop}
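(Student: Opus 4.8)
The plan is to reduce the claim to a scalar linear ODE for the difference $z_{ij} := \lambda_i - \lambda_j$ and to integrate it backwards from the transversality condition $\lambda(T)=0$ in \eqref{lambda_final2}, exactly as in the proof of Proposition \ref{prop_equality}, but now with a nonnegative forcing term coming from the ordering of the $\xi_i$.

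First I would differentiate $z_{ij}$ using the covector equation in \eqref{Hlambda_int}. The term $\frac1N\sum_j(1-\alpha_j)\lambda_j$ does not depend on the index $i$, so it cancels in the difference and one is left with
\begin{equation*}
\dot z_{ij} = (\lambda_i-\lambda_j) - 2(\xi_i-\xi_j) = z_{ij} - 2(\xi_i-\xi_j).
\end{equation*}
This identity holds for every pair $(i,j)$; it suffices to apply it with $i<j$ (or even only with $j=i+1$, chaining the resulting inequalities). Next I would invoke Hypothesis \ref{hyp_orderxi}, which we may assume thanks to the preceding lemma: for $i<j$ the function $g(t):=\xi_i(t)-\xi_j(t)$ is nonnegative on $[0,T]$.

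It then remains to solve the linear equation $\dot z_{ij}=z_{ij}-2g$ with terminal data $z_{ij}(T)=\lambda_i(T)-\lambda_j(T)=0$. Using the integrating factor $e^{-t}$ gives $\frac{d}{dt}\bigl(e^{-t}z_{ij}\bigr) = -2e^{-t}g(t)$, and integrating from $t$ to $T$ yields
\begin{equation*}
e^{-t} z_{ij}(t) = 2\int_t^T e^{-s} g(s)\,ds \geq 0,
\end{equation*}
so $z_{ij}(t)\geq 0$, i.e. $\lambda_i(t)\geq\lambda_j(t)$ for all $t\in[0,T]$, which is the desired conclusion.

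I do not expect any real obstacle here: the entire argument is an explicit backward integration of a scalar linear ODE. The only points requiring a bit of care are that the ordering of the $\xi_i$ is legitimately available (it is, since we have reduced to the situation where Hypothesis \ref{hyp_orderxi} holds), and that the relevant transversality condition for the integral-cost problem is $\lambda(T)=0$ rather than $\lambda(T)=\nabla\mathbb{V}(\xi(T))$; in both cases one has $z_{ij}(T)=0$, which is all that is used.
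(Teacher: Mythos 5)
Your proof is correct and is essentially the same as the paper's: both rest on the identity $\dot z_{ij}=z_{ij}-2(\xi_i-\xi_j)$, the ordering $\xi_i\geq\xi_j$ from Hypothesis \ref{hyp_orderxi}, and the terminal condition $z_{ij}(T)=0$ from \eqref{lambda_final2}. The only cosmetic difference is that you integrate backwards directly to get the explicit nonnegative expression $e^{-t}z_{ij}(t)=2\int_t^T e^{-s}(\xi_i-\xi_j)(s)\,ds$, whereas the paper phrases the same variation-of-constants computation as a contradiction argument.
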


\begin{proof}
Let us reason by contradiction. Suppose that there exists $\tau\in[0,T]$ such that for some i<j, $(\lambda_i-\lambda_j)(\tau)<0$. From the evolution of the covectors (\ref{Hlambda_int}) we derive for all $t\geq\tau$: $(\lambda_i-\lambda_j)(t)=e^{t-\tau}\left((\lambda_i-\lambda_j)(\tau)-2\int_\tau^t e^{-(s-\tau)}(\xi_i-\xi_j)(s)ds\right)$. Since $(\lambda_i-\lambda_j)(\tau)<0$ and for all $s\in[0,T], \; (\xi_i-\xi_j)(s)\geq 0$, we deduce that for all $t\in [\tau,T]$, $(\lambda_i-\lambda_j)(t)<0$, which contradicts the final condition (\ref{lambda_final2}).
\end{proof}

\begin{prop}\label{prop_difflambda0}
Let $\tau\in[0,T]$ and $i,j\in\{1,...,N\}$, such that $(\lambda_i-\lambda_j)(\tau)=0$. Then for all $t\geq \tau$, $(\lambda_i-\lambda_j)(t)=0$ and $(\xi_i-\xi_j)(t)=0$.
\end{prop}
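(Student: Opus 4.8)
The plan is to reduce everything to the variation-of-constants formula for $\lambda_i-\lambda_j$ already derived in the proof of Proposition \ref{prop_orderlambda2}, and then exploit the sign information available on both differences. First I would note that the statement is symmetric in $i$ and $j$, so it suffices to treat the case $i<j$; then Hypothesis \ref{hyp_orderxi} gives $(\xi_i-\xi_j)(t)\ge 0$ and Proposition \ref{prop_orderlambda2} gives $(\lambda_i-\lambda_j)(t)\ge 0$ for every $t\in[0,T]$.

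Next I would subtract the two covector equations in \eqref{Hlambda_int}, observing that the common term $\tfrac1N\sum_j(1-\alpha_j)\lambda_j$ cancels, to obtain $\tfrac{d}{dt}(\lambda_i-\lambda_j)=(\lambda_i-\lambda_j)-2(\xi_i-\xi_j)$. Integrating from $\tau$ yields, for $t\ge\tau$,
$$(\lambda_i-\lambda_j)(t)=e^{t-\tau}\Big((\lambda_i-\lambda_j)(\tau)-2\int_\tau^t e^{-(s-\tau)}(\xi_i-\xi_j)(s)\,ds\Big).$$
Plugging in the hypothesis $(\lambda_i-\lambda_j)(\tau)=0$ and using $(\xi_i-\xi_j)\ge0$ makes the right-hand side nonpositive for all $t\ge\tau$; since we also know $(\lambda_i-\lambda_j)(t)\ge0$, this forces $(\lambda_i-\lambda_j)(t)=0$ on $[\tau,T]$, which is the first claim.

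Finally, with $(\lambda_i-\lambda_j)\equiv0$ on $[\tau,T]$, the displayed identity collapses to $\int_\tau^t e^{-(s-\tau)}(\xi_i-\xi_j)(s)\,ds=0$ for every $t\in[\tau,T]$; since $s\mapsto e^{-(s-\tau)}(\xi_i-\xi_j)(s)$ is continuous (the $\xi_k$ solve \eqref{scalar} and are absolutely continuous) and nonnegative, differentiating in $t$ gives $(\xi_i-\xi_j)(t)=0$ on $[\tau,T]$, the second claim. I do not expect a genuine obstacle here: this is essentially a one-line Gronwall-type computation, and the only points needing a little care are securing the sign of $\xi_i-\xi_j$ (which is exactly what Hypothesis \ref{hyp_orderxi} and Proposition \ref{prop_orderlambda2} supply) and the mild regularity used to differentiate the vanishing integral.
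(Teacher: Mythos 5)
Your proof is correct and follows essentially the same route as the paper: both derive the variation-of-constants identity $(\lambda_i-\lambda_j)(t)=e^{t-\tau}\bigl((\lambda_i-\lambda_j)(\tau)-2\int_\tau^t e^{-(s-\tau)}(\xi_i-\xi_j)(s)\,ds\bigr)$ and combine the sign $(\lambda_i-\lambda_j)\ge 0$ from Proposition \ref{prop_orderlambda2} with $(\xi_i-\xi_j)\ge 0$ from Hypothesis \ref{hyp_orderxi} to force both quantities to vanish. Your extra remark on continuity of the integrand to pass from a vanishing integral to a vanishing integrand is a welcome (if minor) clarification of a step the paper leaves implicit.
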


\begin{proof}
Let $\tau\in[0,T]$ and $i,j\in\{1,...,N\}$, such that $(\lambda_i-\lambda_j)(\tau)=0$. Then for all $t\geq \tau$, 
\begin{equation}\label{difflambda0}
(\lambda_i-\lambda_j)(t)=-2 e^{t-\tau}\int_\tau^t e^{-(s-\tau)}(\xi_i-\xi_j)(s)ds.
\end{equation}
 Suppose for instance that $i<j$. According to Proposition \ref{prop_orderlambda2}, for all $t\in[0,T]$, $(\lambda_i-\lambda_j)(t)\geq 0$. Since we made Hypothesis \ref{hyp_orderxi}, the right-hand side of equation (\ref{difflambda0}) is nonpositive. This is only possible if both sides are equally zero. Hence, for all $t\geq \tau$, $(\lambda_i-\lambda_j)(t)=0$ and $(\xi_i-\xi_j)(t)=0$.
\end{proof}

The following proposition states that if at a certain point in time, two agents have the same projected velocities, then these should stay identical until final time. 

\begin{prop}\label{prop_equalcontrol}
Suppose that there exists $\tau\in [0,T]$ and $i,j \in \{1,...,N\}$ such that $\xi_i(\tau)=\xi_j(\tau)$. Then
\begin{equation}\label{equalcontrol}
 \text{for all } t\geq\tau, \; \xi_i(t)=\xi_j(t).
\end{equation}
As a consequence, for almost all $t\geq\tau$, $\alpha_i(t)=\alpha_j(t)$.
\end{prop}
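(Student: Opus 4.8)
The plan is to combine Proposition~\ref{prop_equalcontrol}'s first claim, namely that $\xi_i(\tau)=\xi_j(\tau)$ forces $\xi_i\equiv\xi_j$ on $[\tau,T]$, with the preceding propositions on the covectors. First I would establish the trajectory statement \eqref{equalcontrol}. Let $\tau\in[0,T]$ with $\xi_i(\tau)=\xi_j(\tau)$; without loss of generality assume $i<j$, so by Hypothesis~\ref{hyp_orderxi} we have $\xi_i(t)\geq\xi_j(t)$ for all $t$. Writing $y=\xi_i-\xi_j$, subtracting the two equations in \eqref{xidot} gives
\[
\dot y = -y -(\alpha_i-\alpha_j)\,\bar\xi,
\]
so that for $t\geq\tau$,
\[
y(t)=e^{-(t-\tau)}\Big(y(\tau)-\int_\tau^t e^{s-\tau}\,(\alpha_i-\alpha_j)(s)\,\bar\xi(s)\,ds\Big)
= -e^{-(t-\tau)}\int_\tau^t e^{s-\tau}\,(\alpha_i-\alpha_j)(s)\,\bar\xi(s)\,ds,
\]
using $y(\tau)=0$. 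Since $\bar\xi>0$ on $[0,T]$ by Proposition~\ref{prop_positive} and $y(t)\geq 0$ for all $t$, the integrand must be nonpositive a.e., i.e. $\alpha_i(s)\leq\alpha_j(s)$ a.e.\ on $[\tau,T]$.

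Next I would invoke the covector structure. By the transversality condition \eqref{lambda_final} and Proposition~\ref{prop_orderlambda2} (for the integral-cost problem; for the final-cost problem one uses \eqref{Condopt}), the covectors are ordered, $\lambda_i\geq\lambda_j$. The key is to show equality of covectors propagates: I would apply Proposition~\ref{prop_difflambda0} in the following way. Since $\xi_i(t)=\xi_j(t)$ would give, via \eqref{difflambda0}-type reasoning, that $\lambda_i-\lambda_j$ is driven only by the nonpositive source $-2(\xi_i-\xi_j)$, and this source vanishes as soon as the trajectories coincide. Concretely, the cleanest route is: from $\alpha_i\leq\alpha_j$ a.e.\ on $[\tau,T]$ established above, and from the Hamiltonian minimization condition \eqref{MinH}, deduce that on any subinterval where $\lambda_i>\lambda_j$ the PMP forces $\alpha_i$ to be maximal relative to $\alpha_j$ (we control the largest covector first), contradicting $\alpha_i\leq\alpha_j$ unless the control mass on both is zero — but zero differential control together with $y(\tau)=0$ keeps $y\equiv 0$. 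Thus either way $y\equiv 0$ on $[\tau,T]$, proving \eqref{equalcontrol}.

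Finally, for the consequence that $\alpha_i(t)=\alpha_j(t)$ for a.e.\ $t\geq\tau$: once $\xi_i\equiv\xi_j$ on $[\tau,T]$, differentiating and using \eqref{xidot} gives $0=\dot\xi_i-\dot\xi_j=-(\xi_i-\xi_j)-(\alpha_i-\alpha_j)\bar\xi = -(\alpha_i-\alpha_j)\bar\xi$ for a.e.\ $t$; since $\bar\xi(t)>0$ for all $t\in[0,T]$ (Proposition~\ref{prop_positive}), this forces $\alpha_i(t)=\alpha_j(t)$ a.e.\ on $[\tau,T]$.

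The main obstacle is making the covector argument airtight without circular reasoning: one wants to use the PMP to constrain the controls, but the PMP's conclusion about controlling "the largest covector" is exactly what needs equality of covectors to be undetermined. I expect the cleanest resolution is the purely trajectory-based one sketched above — derive $\alpha_i\leq\alpha_j$ a.e.\ from the sign of $y$, then observe that if $\lambda_i(t)>\lambda_j(t)$ on a positive-measure set the Hamiltonian term $-\bar\xi(\alpha_i\lambda_i+\alpha_j\lambda_j)$ would be strictly decreased by swapping control mass from $j$ to $i$, so optimality forces the total control on $\{i,j\}$ to sit entirely on $i$ there, i.e. $\alpha_j=0\leq\alpha_i$, which combined with $\alpha_i\leq\alpha_j$ gives $\alpha_i=\alpha_j=0$ on that set; hence the differential control vanishes a.e., and with $y(\tau)=0$ we get $y\equiv 0$, after which Proposition~\ref{prop_difflambda0} (or direct integration of \eqref{difflambda0}) forces $\lambda_i\equiv\lambda_j$ as well, closing the loop.
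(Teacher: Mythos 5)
There is a genuine gap at the very first step, and the whole argument leans on it. From $y(t)=\xi_i(t)-\xi_j(t)\geq 0$ and
\[
y(t)=-e^{-(t-\tau)}\int_\tau^t e^{s-\tau}\,(\alpha_i-\alpha_j)(s)\,\bar\xi(s)\,ds
\]
you conclude that ``the integrand must be nonpositive a.e.,'' i.e.\ $\alpha_i\leq\alpha_j$ a.e. That inference is false: $y\geq 0$ only says the \emph{running integral} is nonpositive for every $t$, which is perfectly compatible with the integrand being strictly positive on sets of positive measure (e.g.\ negative near $\tau$ and positive later, with the cumulative integral staying $\leq 0$). Since your entire PMP-based case analysis ("$\alpha_j=0\leq\alpha_i$ combined with $\alpha_i\leq\alpha_j$ gives $\alpha_i=\alpha_j=0$") rests on this pointwise inequality, the argument does not close. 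You also correctly sense the circularity danger in routing everything through the covectors; that danger is real here, because Proposition~\ref{prop_difflambda0} transfers information from $\lambda_i=\lambda_j$ to $\xi_i=\xi_j$, which is the wrong direction for what you need.

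The paper's proof avoids the PMP entirely and is a direct comparison argument exploiting the strict convexity of $\mathbb{V}$ in time, which is exactly what makes this statement specific to the \emph{integral} cost. Let $\tilde\tau$ be the last time up to which $\xi_i\equiv\xi_j$ holds, and suppose $\tilde\tau<T$. Replace $\alpha$ after $\tilde\tau$ by the control $\beta$ with $\beta_i=\beta_j=\tfrac12(\alpha_i+\alpha_j)$ and $\beta_k=\alpha_k$ otherwise. Since $\sum_k\beta_k=\sum_k\alpha_k$, the mean $\bar\xi$ and all $\xi_k$, $k\neq i,j$, are unchanged, while $\xi_i^\beta=\xi_j^\beta=\tfrac12(\xi_i+\xi_j)$; hence pointwise in time
\[
\mathbb{V}_\alpha-\mathbb{V}_\beta \;\propto\; \xi_i^2+\xi_j^2-2\Bigl(\tfrac12(\xi_i+\xi_j)\Bigr)^2\;=\;\tfrac12(\xi_i-\xi_j)^2\;\geq\;0,
\]
with strict inequality on the interval just after $\tilde\tau$ where the trajectories split. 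This gives $\int_0^T\mathbb{V}_\beta<\int_0^T\mathbb{V}_\alpha$, contradicting optimality. Your final step (once $\xi_i\equiv\xi_j$ is known, $0=-(\alpha_i-\alpha_j)\bar\xi$ with $\bar\xi>0$ forces $\alpha_i=\alpha_j$ a.e.) is correct and matches the paper. If you want to salvage your approach, you should abandon the pointwise sign claim on $\alpha_i-\alpha_j$ and instead make the averaging comparison above, which is self-contained and needs neither the covectors nor Proposition~\ref{prop_difflambda0}.
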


\begin{proof}
Let $\tau\in [0,T]$ and $i,j\in \{1,...,N\}$.  
Define $\tilde\tau=\sup\{t\geq \tau \; | \; \xi_i(t)=\xi_j(t) \text{ for all } t\in [\tau, \tilde\tau]\}$. Notice from \eqref{xidot} that this implies that $\alpha_i(t)=\alpha_j(t)$ for almost every $t\in [\tau, \tilde{\tau}]$. Let us prove that $\tilde{\tau}=T$. \\
Suppose that $\tilde\tau<T$. Then there exists $\delta>0$ such that for all $t\in ]\tilde\tau, \tilde\tau+\delta]$, $\xi_i(t)\neq \xi_j(t)$. 
Define $\beta$ such that $\beta=\alpha$ on $[0,\tilde\tau]$ and
\[ 
\begin{cases}
 \beta_i=\beta_j=\frac12(\alpha_i+\alpha_j) \\
 \beta_k=\alpha_k \text{ for } k\not =i,\; k \not =j
\end{cases}  \qquad \text{ on } ]\tilde\tau,T ],
\]
and denote by $\xi^\beta$ the corresponding trajectory. 
Notice that $\sum_k\alpha_k\equiv \sum_k\beta_k$, so according to \eqref{xidot}, $\bar{\xi}\equiv\bar{\xi}^\beta$. 
This implies that $\xi_k=\xi_k^\beta$ for all $k\neq i,j$. Moreover, $\alpha_i+\alpha_j\equiv \beta_i+\beta_j$ so for all $t\in [\tilde\tau,T]$, $(\xi_i+\xi_j)(t) =(\xi_i^\beta+\xi_j^\beta)(t)$.  Furthermore, $\xi_i^\beta$ and $\xi_j^\beta$ satisfy the same differential equation on $[\tilde\tau, T]$ and $\xi_i^\beta(\tau)=\xi_j^\beta(\tau)$, so for all $t\in [\tilde\tau,T]$, $\xi_i^\beta(t)=\xi_j^\beta(t)=\frac12 (\xi_i+\xi_j)(t)$.
Define $\mathbb{V}_\alpha$ and $\mathbb{V}_{\beta}$ as the cost functions associated respectively with the controls $\alpha$ and $\beta$.
Then $\mathbb{V}_{\beta}=\mathbb{V}_\alpha \text{ on } [0,\tilde\tau]$.
On $]\tilde\tau,T]$, 
\[
\begin{split}
\mathbb{V}_{\alpha}-\mathbb{V}_{\beta} & = 
\sum_k(\xi_k)^2-\sum_k(\xi_k^{\beta})^2 
 = (\xi_i)^2 +(\xi_j)^2 - (\xi_i^{\beta})^2 - (\xi_j^{\beta})^2\\ 
 & =(\xi_i)^2 +(\xi_j)^2- 2(\frac12 (\xi_i+\xi_j) )^2 = (\xi_i - \xi_j)^2.
  \end{split} 
\]
Hence, for all $t\in ]\tilde{\tau},\tilde{\tau}+\delta]$, $\mathbb{V}_\alpha(t) > \mathbb{V}_\beta(t)$, and for all $t\in [\tilde{\tau}+\delta,T]$, $\mathbb{V}_\alpha(t) \geq \mathbb{V}_\beta(t)$. 
We get $\int_0^T\mathbb{V}_{\beta}<\int_0^T\mathbb{V}_\alpha$, which contradicts that $\alpha$ is an optimal control. In conclusion, $\tau=T$, which proves the proposition.
\end{proof}

\subsection{Optimal full-strength control}

We design an optimal control strategy for Problem \ref{prob_FS2}: 

\begin{theorem}\label{th_intstrategy}
Let $J(t)=\{i\in\{1,...,N\} \; | \; \xi_i(t)=\max_j\xi_j(t)\}$. The following control $\alpha$ is optimal for Problem \ref{prob_FS2}:
\begin{equation}\label{optcontstratint}
\begin{cases}
\forall i\in J(t), \; \alpha_i(t)=\frac{1}{|J(t)|} \\
\forall i\not\in J(t), \; \alpha_i(t)=0.
\end{cases}
\end{equation}
\end{theorem}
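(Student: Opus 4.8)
The plan is to show that any optimal control $\alpha^\ast$ for Problem \ref{prob_FS2} must coincide, for almost every $t\in[0,T]$, with the feedback control \eqref{optcontstratint}; since a minimizer exists, this identifies \eqref{optcontstratint} as optimal. I would begin with the structural remark that makes $\mathcal{U}_{FS}$ tractable: there $\sum_i\alpha_i\equiv1$, hence $\dot{\bar\xi}=-\tfrac1N\bar\xi$ and $\bar\xi(t)=\bar\xi(0)e^{-t/N}$ is the \emph{same} function for every control in $\mathcal{U}_{FS}$, and it stays positive by Proposition \ref{prop_positive}. Consequently, by the decomposition \eqref{Vdecomp}, minimizing $\int_0^T\mathbb{V}$ over $\mathcal{U}_{FS}$ is equivalent to minimizing $\int_0^T\tfrac1N\sum_i(\xi_i-\bar\xi)^2\,dt$. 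Existence of an optimal pair $(\xi^\ast,\alpha^\ast)$ follows from the same convexity argument as the one invoked after Proposition \ref{prop_optstrat} (the velocity sets are convex and the running cost does not involve $\alpha$, so the Filippov--Cesari hypotheses hold for this Bolza problem), cf.\ \cite{BressPic}.

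Next I would apply the maximum principle in the form of Section \ref{Sec:PMPint}. Since $\bar\xi>0$, minimizing the Hamiltonian in \eqref{Hlambda_int} over the simplex $\{\alpha\in[0,1]^N:\sum_i\alpha_i=1\}$ forces $\alpha^\ast(t)$ to place its whole unit mass on $\arg\max_i\lambda_i(t)$ and to vanish elsewhere. By Proposition \ref{prop_orderlambda2} the covectors are ordered, $\lambda_1\ge\cdots\ge\lambda_N$ on $[0,T]$, so $\arg\max_i\lambda_i(t)=\{1,\dots,k(t)\}$ for some $k(t)$; and since $\lambda_1(t)=\lambda_{k(t)}(t)$, Proposition \ref{prop_difflambda0} gives $\xi_1(t)=\cdots=\xi_{k(t)}(t)$, i.e.\ $\{1,\dots,k(t)\}\subseteq J(t)$.

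The crux is the reverse inclusion $J(t)\subseteq\{1,\dots,k(t)\}$ for a.e.\ $t$. Suppose that on a set of positive measure there is $j\in J(t)$ with $j>k(t)$; then $\xi_j(t)=\xi_1(t)$ while $\lambda_j(t)<\lambda_1(t)$. Letting $\tau\le t$ be the first instant at which $\xi_j=\xi_1$, Proposition \ref{prop_equalcontrol} gives $\xi_j\equiv\xi_1$ on $[\tau,T]$ and $\alpha^\ast_j=\alpha^\ast_1$ a.e.\ on $[\tau,T]$. By continuity $\lambda_j<\lambda_1$ on a right-neighborhood $[t,t+\varepsilon]$, so there $j\notin\arg\max_i\lambda_i$ and $\alpha^\ast_j=0$ a.e., whence $\alpha^\ast_1=0$ a.e.\ on $[t,t+\varepsilon]$; but $1\in\arg\max_i\lambda_i$ always, and the unit mass on $\{1,\dots,k(\cdot)\}$ is split equally among those coordinates (again by Proposition \ref{prop_equalcontrol}, using $\xi_1=\cdots=\xi_{k(\cdot)}$ near $t$), so $\alpha^\ast_1\ge\tfrac1N>0$ a.e.\ on $[t,t+\varepsilon]$ --- a contradiction. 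Hence $\{1,\dots,k(t)\}=J(t)$ a.e., and as $\xi_1=\cdots=\xi_{k(t)}$ forces equal control with unit sum, $\alpha^\ast_i(t)=1/|J(t)|$ for $i\in J(t)$ and $0$ otherwise for a.e.\ $t$, i.e.\ $\alpha^\ast$ agrees with \eqref{optcontstratint}.

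Finally I would verify that \eqref{optcontstratint}, read as a feedback law, produces a unique trajectory, which must then be $\xi^\ast$, completing the argument. The relevant structure is the \emph{merge structure}: by Proposition \ref{prop_equalcontrol}, once two coordinates coincide they stay equal, so $t\mapsto J(t)$ is nondecreasing for inclusion, equals $\{1,\dots,|J(t)|\}$ by Hypothesis \ref{hyp_orderxi}, and changes value at most $N-1$ times; on each maximal interval where $J\equiv\{1,\dots,l\}$ the closed-loop dynamics reduce to the explicitly solvable linear ODEs $\dot\zeta=-\zeta+(1-\tfrac1l)\bar\xi$ (for the common value $\zeta$ of $\xi_1,\dots,\xi_l$) and $\dot\xi_i=-\xi_i+\bar\xi$ for $i>l$, with $\bar\xi(t)=\bar\xi(0)e^{-t/N}$, and the next merge time --- when $\xi_{l+1}$ catches up to $\zeta$ --- is exactly the time $t_l$ of Theorem \ref{th_strategy}. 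Concatenating these finitely many pieces gives the unique solution of the feedback system. I expect the principal difficulty to be the measure-theoretic bookkeeping in the third paragraph: ensuring that the contradiction near $t$ is valid for almost every $t$ (not merely at isolated instants) and that the equal-control conclusion of Proposition \ref{prop_equalcontrol} is patched correctly across the finitely many constancy intervals of $J$, so that $\alpha^\ast$ and \eqref{optcontstratint} coincide as elements of $\mathcal{U}_{FS}$ up to a null set.
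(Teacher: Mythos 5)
Your argument is correct and follows essentially the same route as the paper's proof: the PMP forces the unit control mass onto $\arg\max_i\lambda_i(t)$, and Propositions \ref{prop_orderlambda2}, \ref{prop_difflambda0} and \ref{prop_equalcontrol} identify that set with $J(t)$ and force the equal splitting $1/|J(t)|$. The additional material (the reduction via the fixed $\bar\xi$ on $\mathcal{U}_{FS}$, the existence argument, and the closed-loop uniqueness check --- where the catch-up time of $\xi_{l+1}$ is $t_{l+1}$, not $t_l$) goes beyond what the paper records but does not change the approach.
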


\begin{proof}
According to Pontryagin's maximum principle and the expression of the Hamiltonian (\ref{Hlambda_int}), the optimal control strategy solving Problem \ref{prob_FS2} requires to set
$\sum_{i\in I(t)} \alpha_i(t) =1$ and $\alpha_k(t)=0$ for $k \not\in I(t)$, where $I(t):=\{i \; | \; \lambda_i(t)=\max_j \lambda_j(t)\}$. Furthermore, according to Proposition \ref{prop_difflambda0}, if
$\lambda_i(\bar t)=\lambda_j(\bar t)$, then $\xi_i(t)=\xi_j(t)$ for all $t\geq \bar t$, and according to Proposition \ref{prop_equalcontrol}, $\alpha_i(t)=\alpha_j(t)$ for almost every $t\geq \bar t $.
Hence, the optimal strategy in fact requires to set, for almost every $t\in [0,T]$, 
\begin{equation}\label{optcontstratint2}
\begin{cases}
\forall i\in I(t), \; \alpha_i(t)=\frac{1}{|I(t)|} \\
\forall i\not\in I(t), \; \alpha_i(t)=0, 
\end{cases}
\end{equation}
where $|\cdot |$ denotes the cardinality of a set. 
Let us prove that $I(t)=J(t)$ for almost every $t$. 
Assume that $i\in I(t)$ and \eqref{optcontstratint2} holds true. According to Proposition \ref{prop_orderlambda2}, the covectors are ordered, so $\lambda_1(t)=\cdots=\lambda_i(t)$. From Proposition \ref{prop_difflambda0} and Hypothesis \ref{hyp_orderxi}, this implies $\xi_1(t)=\cdots =\xi_i(t)$, so $i\in J(t)$. 
Conversely, assume that $i\in J(t)$. Then from Hypothesis \ref{hyp_orderxi}, $\xi_1(t)=\cdots=\xi_i(t)$. According to Proposition \ref{prop_equalcontrol}, $\alpha_1(t)=\cdots=\alpha_i(t)$. Since $\alpha(t)$ verifies \eqref{optcontstratint2}, we deduce that $i\in I(t)$. 
Therefore, $I(t)=J(t)$ for almost every $t\in [0,T]$ and the optimal strategies \eqref{optcontstratint2} and \eqref{optcontstratint} are equivalent. 

\end{proof}

Notice that the control strategy in the case of integral cost minimization with full control (Problem \ref{prob_FS2}) is equivalent to the Instantaneous decrease strategy of Prop. \ref{prop_inst} (taking $M=1$). It is more restrictive than the optimal strategy minimizing the final value of the migration functional with full control (Problem \ref{prob_FS}) seen in Section \ref{Sec:gencase}. Indeed, this control strategy cannot be sparse. In order to minimize $\int_0^T\mathbb{V}(t)dt$, one has to split the control among more and more agents. However, any optimal control solving Problem \ref{prob_FS2} is also optimal for Problem \ref{prob_FS}.

\subsection{Optimal control in the general case}

After designing the optimal strategy for Problem \ref{prob_FS2}, we show that Problems \ref{prob_main2} and \ref{prob_FS2} are actually equivalent, i.e. that the optimal control solving Problem \ref{prob_main2} belongs to $\mathcal{U}_{\text{FS}}$.

\begin{theorem}
The optimal control strategy for Problem \ref{prob_main2} requires using full-strength control, i.e. $\alpha\in\mathcal{U}_{\text{FS}}$. 
\end{theorem}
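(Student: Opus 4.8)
The plan is to apply Pontryagin's Maximum Principle and reduce the statement to the single claim that the largest covector $\lambda_1$ stays strictly positive on $[0,T)$. As recalled in Section~\ref{Sec:PMPint}, minimizing the Hamiltonian in \eqref{Hlambda_int} forces $\alpha_k\equiv0$ for $k\notin I_\lambda(t)$ and $\sum_{i\in I_\lambda^+(t)}\alpha_i=1$ whenever $I_\lambda^+(t)\neq\emptyset$; since the covectors are ordered (Proposition~\ref{prop_orderlambda2}), $I_\lambda^+(t)\neq\emptyset$ if and only if $\lambda_1(t)>0$. So if $\lambda_1(t)>0$ for a.e.\ $t\in[0,T]$ then $\sum_i\alpha_i\equiv1$, i.e.\ $\alpha\in\mathcal{U}_{\text{FS}}$, and Problem~\ref{prob_main2} reduces to Problem~\ref{prob_FS2}.

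To prove $\lambda_1>0$ on $[0,T)$ I would first note two facts near $t=T$. From $\lambda(T)=0$ and \eqref{Hlambda_int} one gets $\dot\lambda_1(T)=-2\xi_1(T)<0$ (using $\xi_1(T)\geq\bar\xi(T)>0$ from Proposition~\ref{prop_positive} and Hypothesis~\ref{hyp_orderxi}), hence $\lambda_1>0$ on a left neighbourhood of $T$; and integrating \eqref{Hlambda_int} backward from $\lambda_1(T)=0$,
\begin{equation*}
\lambda_1(t)=\int_t^T e^{t-s}\Big(\tfrac1N\sum_{j=1}^N(1-\alpha_j(s))\lambda_j(s)+2\xi_1(s)\Big)\,ds,\qquad t\in[0,T].
\end{equation*}
Then I argue by contradiction: let $\tau^*:=\sup\{t<T:\lambda_1(t)\leq0\}$ and suppose this set is nonempty; the first fact gives $\tau^*<T$, and by continuity $\lambda_1(\tau^*)\leq0$ while $\lambda_1>0$ on $(\tau^*,T)$.

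The core estimate is a pointwise bound on $\psi:=\lambda_1-\bar\lambda\geq0$. Averaging \eqref{Hlambda_int} over $i$ and subtracting from the $i=1$ equation, the terms $\tfrac1N\sum_j(1-\alpha_j)\lambda_j$ cancel, giving $\dot\psi=\psi-2(\xi_1-\bar\xi)$; since $\psi(T)=0$ this yields $\psi(s)=2\int_s^T e^{s-u}\zeta(u)\,du$ with $\zeta:=\xi_1-\bar\xi\geq0$. Now $\dot\zeta=-\zeta+\bar\xi\big(\tfrac1N\sum_j\alpha_j-\alpha_1\big)$, and along the optimal control $\alpha_1\geq\tfrac1N\geq\tfrac1N\sum_j\alpha_j$: if $\lambda_1\leq0$ no control is used so both vanish, and if $\lambda_1>0$ the control is concentrated on $I(t)=\{i:\lambda_i(t)=\lambda_1(t)\}\ni1$, whose members have equal $\xi_i$ (Proposition~\ref{prop_difflambda0}) and hence equal $\alpha_i$ (Proposition~\ref{prop_equalcontrol}), so $\alpha_1=1/|I(t)|\geq1/N$. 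Therefore $\dot\zeta\leq-\zeta$, so $\zeta(u)\leq e^{s-u}\zeta(s)$ for $u\geq s$, and thus $\psi(s)\leq\big(1-e^{-2(T-s)}\big)\zeta(s)<\zeta(s)\leq\xi_1(s)$.

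Combining everything on $(\tau^*,T)$: there $\lambda_1>0$, so full control is used, $\sum_j\alpha_j\lambda_j=\lambda_1$, and $\tfrac1N\sum_j(1-\alpha_j)\lambda_j=\bar\lambda-\tfrac1N\lambda_1=\tfrac{N-1}{N}\lambda_1-\psi$. Evaluating the integral representation at $t=\tau^*$ gives
\begin{equation*}
\lambda_1(\tau^*)=\int_{\tau^*}^T e^{\tau^*-s}\Big(\tfrac{N-1}{N}\lambda_1(s)+\big(2\xi_1(s)-\psi(s)\big)\Big)\,ds,
\end{equation*}
and on $(\tau^*,T)$ the integrand is strictly larger than $\tfrac{N-1}{N}\lambda_1(s)+\xi_1(s)>0$, whence $\lambda_1(\tau^*)>0$, contradicting $\lambda_1(\tau^*)\leq0$. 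Hence $\lambda_1>0$ on $[0,T)$ and the optimal control belongs to $\mathcal{U}_{\text{FS}}$. The step I expect to be the main obstacle is the estimate $\psi\leq\xi_1$ (equivalently $\lambda_1-\bar\lambda\leq\xi_1$): this is precisely what excludes Inactivation here, unlike in the final-cost case where $\lambda(T)=\nabla\mathbb{V}(T)$ may be positive and $\lambda_1$ can vanish on an initial interval, and it is where Hypothesis~\ref{hyp_orderxi} and Propositions~\ref{prop_orderlambda2}, \ref{prop_difflambda0} and \ref{prop_equalcontrol} enter; granted that estimate, strict positivity of the integrand, and hence the contradiction, is immediate.
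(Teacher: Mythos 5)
Your proposal is correct, and its overall architecture coincides with the paper's: reduce the theorem to the strict positivity of $\lambda_1$ on $]0,T[$, observe from \eqref{lambda_final2} and \eqref{Hlambda_int} that $\dot\lambda_1(T)=-2\xi_1(T)<0$ so that $\lambda_1>0$ near $T$, and then derive a contradiction at the last time $\lambda_1$ touches zero by means of the key inequality $\lambda_1-\bar\lambda<\xi_1$. Where you genuinely diverge is in how that key inequality is obtained. The paper runs a direct comparison of the two backward ODEs for $\Lambda=\lambda_1-\bar\lambda$ and $\xi_1$: at any point where $\Lambda=\xi_1$ one has $\dot\Lambda-\dot\xi_1=-(1+\alpha_1)\bar\xi<0$ in backward time, so starting from $\Lambda(T)=0<\xi_1(T)$ the curves never cross; this uses nothing beyond $\bar\xi>0$ and the state equation for $\xi_1$. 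You instead write $\psi(s)=2\int_s^T e^{s-u}\zeta(u)\,du$ with $\zeta=\xi_1-\bar\xi$ and control $\zeta$ via $\dot\zeta\leq-\zeta$, which requires the extra input $\alpha_1\geq\frac1N\sum_j\alpha_j$, justified through the even-splitting facts (Propositions \ref{prop_orderlambda2}, \ref{prop_difflambda0}, \ref{prop_equalcontrol}). Your route is slightly heavier in machinery (and you should phrase the case $\lambda_1(t)=0$ more carefully, since the PMP does not literally force $\alpha\equiv0$ there — though the inequality $\alpha_1\geq\frac1N\sum_j\alpha_j$ still holds by equal splitting on $I(t)\ni 1$, and in any case you only need the estimate on $]\tau^*,T[$ where $\lambda_1>0$), but it buys the quantitative bound $\psi(s)\leq(1-e^{-2(T-s)})\zeta(s)$ and a self-contained integral identity for $\lambda_1(\tau^*)$ in place of the paper's sign argument on $\dot\lambda_1(\tau)$. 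Both conclude identically; the paper's comparison argument is the more economical of the two.
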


\begin{proof}
According to the Pontryagin Maximum Principle (see Section \ref{Sec:PMPint}), if $\lambda_1(t)>0$ for all $t$, then full control must be used at all time.  
Combining the final condition \eqref{lambda_final2} and the evolution \eqref{Hlambda_int}, we get $\lambda_1(T)=0$ and $\dot{\lambda}_1(T)=-2\xi_1(T)<0$. Hence there exists an interval $]t,T[$ on which $\lambda_1>0$. Let $\tau=\inf\{t\in [0, T] \text{ s.t. } \lambda_1(s)>0 \text{ for all } s\in ]t,T[ \}$. Suppose that $\tau>0$. Then $\lambda_1(\tau)=0$. Furthermore, $\dot\lambda_1(\tau)=(\lambda_1-\bar\lambda)(\tau) -2\xi_1(\tau)$. We compute: $\dot\lambda_1-\dot{\bar\lambda}=\lambda_1-\bar\lambda -2(\xi_1-\bar\xi)$. Denoting $\Lambda=\lambda_1-\bar{\lambda}$, we get the following evolution backwards in time: $\dot{\Lambda}=-\Lambda+2(\xi_1-\bar\xi)$. Recall that backwards in time, we also have: $\dot\xi_1=\xi_1-(1-\alpha_1)\bar\xi$. If $\Lambda=\xi_1$, then $\dot\Lambda= \xi_1-2\bar\xi=\dot\xi_1+(1-\alpha_1)\bar\xi-2\bar\xi=\dot\xi_1-(1+\alpha_1)\bar\xi<\dot\xi_1$. Since $\Lambda(T)=0<\xi_1(T)$, this implies that $\Lambda(t)<\xi_1(t)$ for all $t\in[\tau,T]$. Hence, $\dot\lambda_1(\tau) = \Lambda(\tau)-2\xi_1(\tau) <0$, which contradicts the definition of $\tau$. We conclude that $\lambda_1(t)>0$ for all $t\in]0,T[$, and that $\sum_i\alpha_i\equiv 1$.
\end{proof}

Hence, the control strategy designed in Theorem \ref{th_intstrategy} is an optimal strategy for the minimization of integral cost (Prob. \ref{prob_main2}). Unlike in the minimization of the final cost (Prob. \ref{prob_main}), there is no initial Inactivation period. 

Figure \ref{fig:t} illustrates the control strategy designed in Theorem \ref{th_intstrategy}. In this example, 5 agents are to be controlled optimally to reach consensus at the target velocity $V=(1,0)$. Initially (Figure \ref{fig:t1}), only one agent is controlled, the agent with the biggest projected velocity over $\bar{v}-V$. The set $J(t)=\arg\max_{i\in\{1,...,N\} } \langle v_i, \frac{\bar{v}-V}{\|\bar{v}-V\|}\rangle $ contains more and more agents as time goes (\ref{fig:t2}, \ref{fig:t3}) and eventually, control is split evenly among all agents (see Figure \ref{fig:t4}).  

\begin{figure}[h]
        \centering
        \begin{subfigure}[b]{0.22\textwidth}
                \includegraphics[trim=1cm 0cm 2cm 1cm, clip=true, scale=0.25]{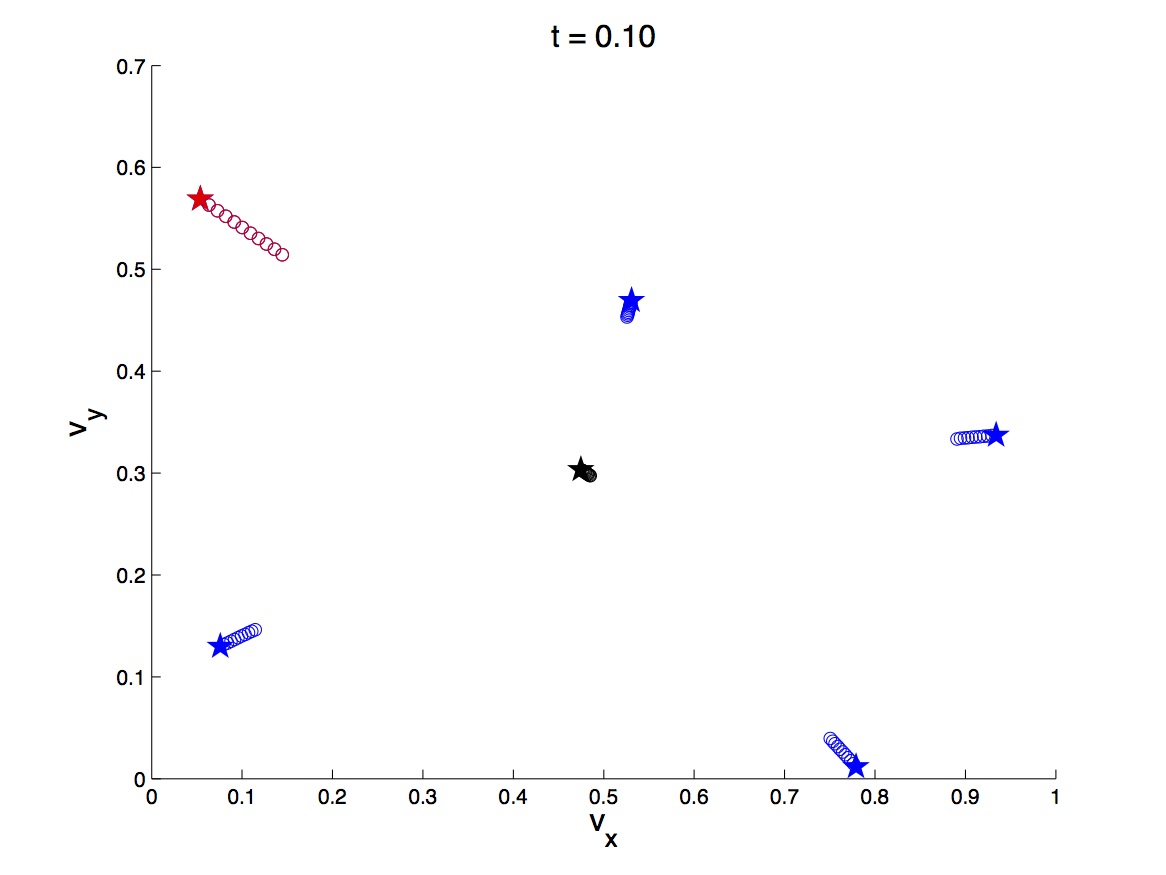}
                \caption{$t=0.1$}
                \label{fig:t1}
        \end{subfigure}%
        \begin{subfigure}[b]{0.22\textwidth}
                \includegraphics[trim=1cm 0cm 2cm 1cm, clip=true, scale=0.25]{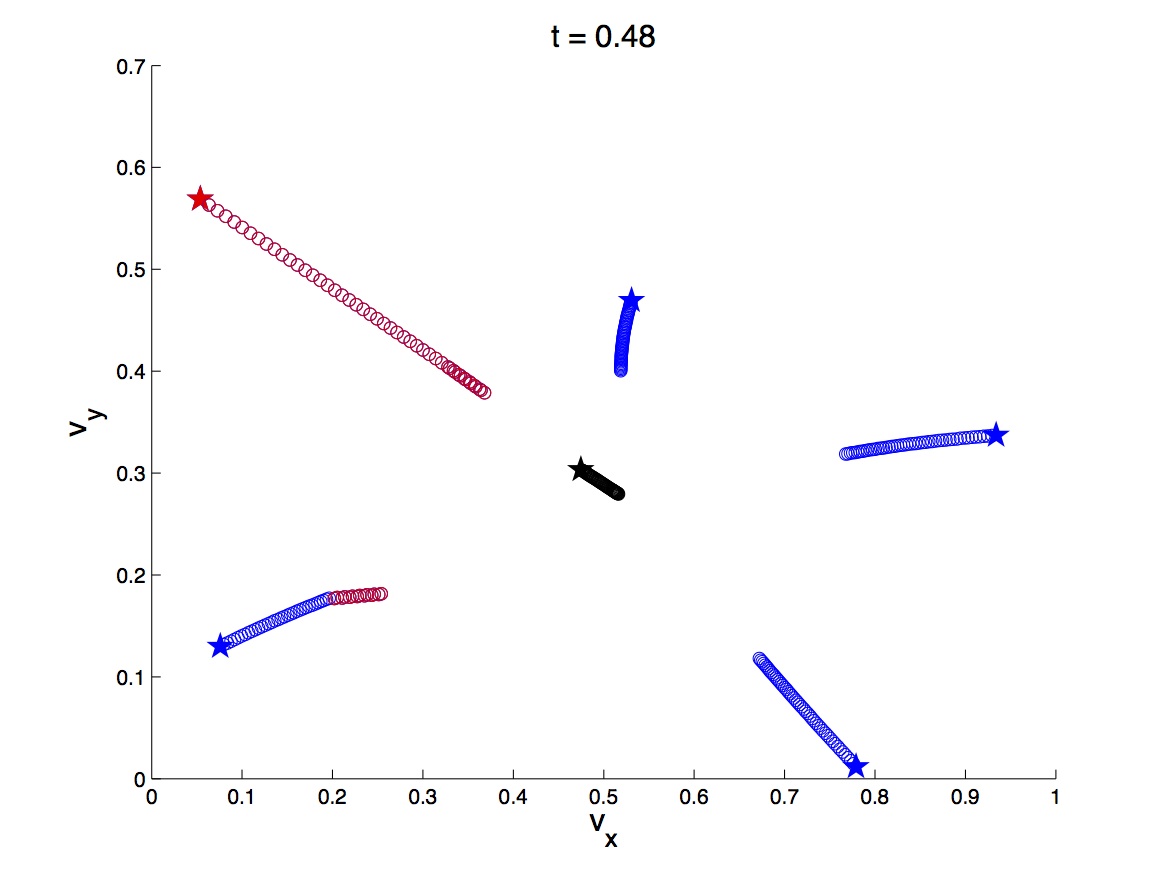}
                \caption{$t=0.48$}
                \label{fig:t2}
        \end{subfigure}
        \begin{subfigure}[b]{0.22\textwidth}
                \includegraphics[trim=1cm 0cm 2cm 1cm, clip=true, scale=0.25]{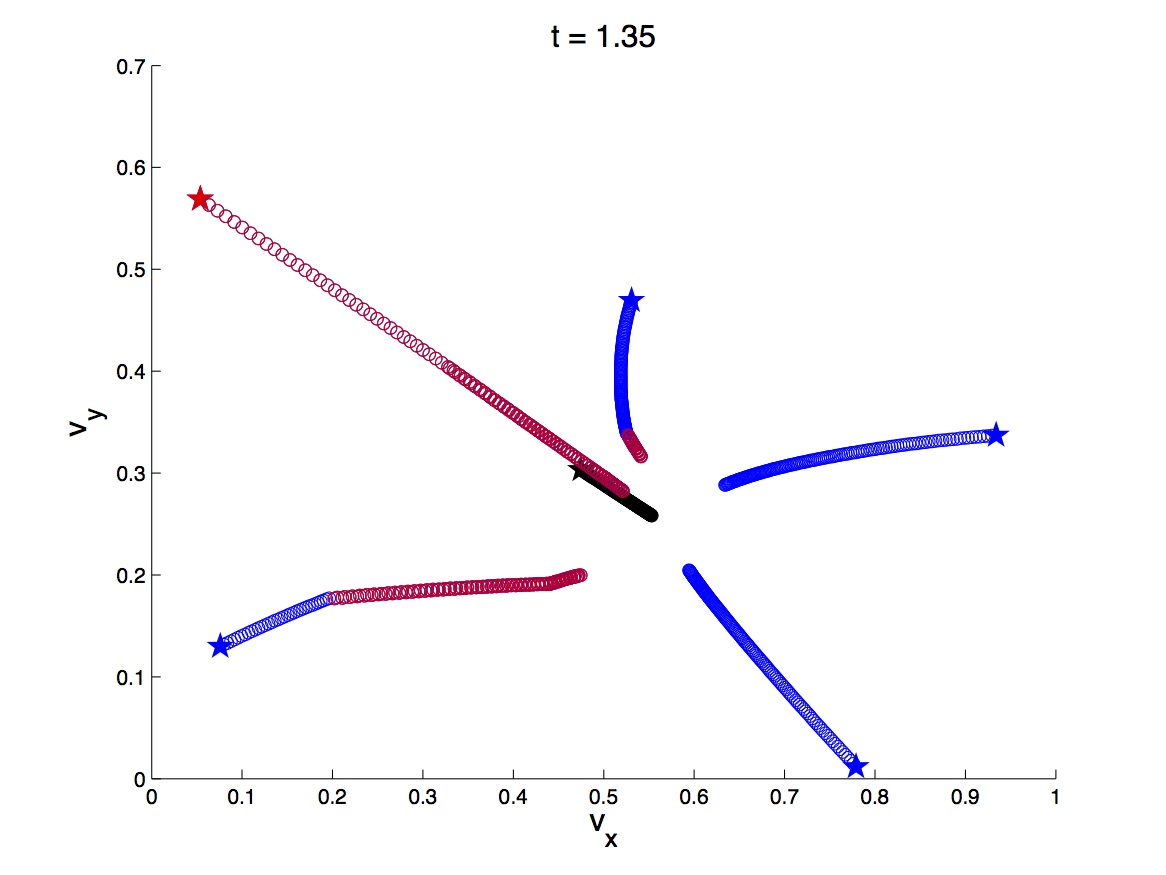}
                \caption{$t=1.35$}
                \label{fig:t3}
        \end{subfigure}
         \begin{subfigure}[b]{0.22\textwidth}
                \includegraphics[trim=1cm 0cm 2cm 1cm, clip=true, scale=0.25]{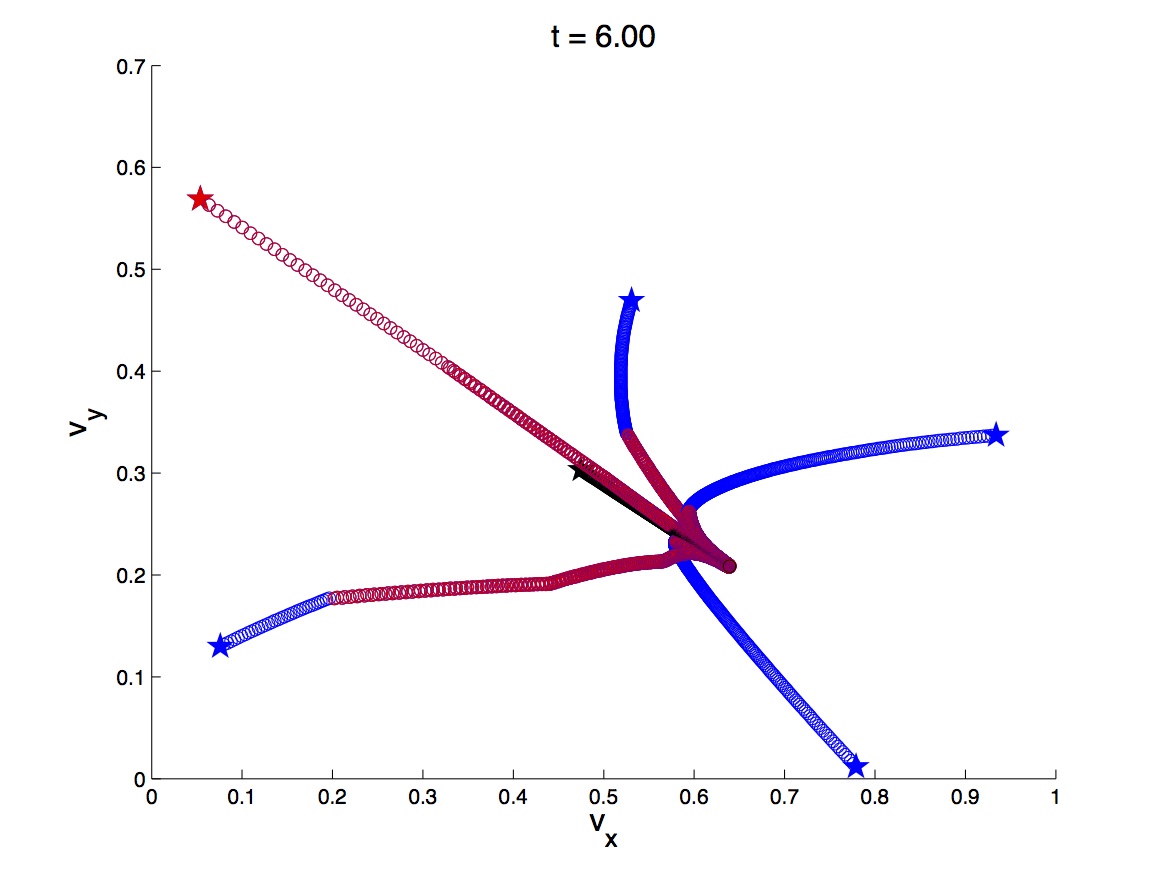}
                \caption{$t=6.0$}
                \label{fig:t4}
        \end{subfigure}
        \caption{Control of 5 agents to reach the target velocity $V=(1,0)$. Agents are represented in the velocity space, controlled ones in red, uncontrolled ones in blue, and the mean velocity in black. Initial positions are marked by stars.}\label{fig:t}
\end{figure}

\section*{Acknowledgment}
The authors acknowledge the support of the NSF project KI-Net, DMS Grant \# 1107444. 
Benjamin Scharf acknowledges the support of the ERC-Starting Grant "High-Dimensional Sparse Optimal Control".

\section*{Appendix}

\begin{proof}(Theorem \ref{Th_M12})
First, let $\xi_1(0)\geq\xi_2(0) > 0$. According to Prop. \ref{prop_positive}, $\xi_1(t)> 0$ and $\xi_2(t) > 0$ for all $t\in [0,T]$. The transversality condition gives $\lambda_1(T)> 0$ and $\lambda_2(T)> 0$, and according to Prop. \ref{prop_M12}, $\lambda_1(t)> 0$ and $\lambda_2(t)> 0$ for all $t\leq T$. According to Pontryagin's maximum principle (see Section \ref{Sec:PMP}), the global strategy requires setting $\alpha_1+\alpha_2\equiv M$. In this case, $\bar\xi(t) = \bar\xi(0) \exp(-\frac{M}{2}t)$ does not depend on the choice of $\alpha_1$ and $\alpha_2$. Minimizing $\mathbb{V}$ (\ref{functional_V2}) therefore amounts to minimizing $(\xi_1-\xi_2)^2$. 
\begin{itemize}\itemsep0pt \parskip0pt \parsep0pt
\item[(i)] If $T\geq t_2$, we will show that in addition to satisfying $\alpha_1+\alpha_2\equiv M$, the optimal control $\alpha$ must achieve $\xi_1(T)=\xi_2(T)$. Such a control strategy exists, since for instance (as one can see by direct computation of \eqref{scalar2}) the control $(\beta_1,\beta_2)(t)=(1,M-1)$ for all $t\in [0,t_2[$ and $(\beta_1,\beta_2)(t)=(M/2,M/2)$ for all $t\in [t_2,T]$ achieves $\xi_1^\beta(t)=\xi_2^\beta(t)$ for all $t\in [t_2,T]$, where $\xi^\beta$ denotes the corresponding trajectory. Notice that $\beta$ minimizes $\bar{\xi}(T)$ by using the full strength $M$ of the control at all time (see \eqref{xibar}), and minimizes $(\xi_1-\xi_2)^2(T)$, so it minimizes $\mathbb{V}(T)$ (see \eqref{functional_V}). Hence, in order to be optimal, $\alpha$ must satisfy $\xi_1(T)=\xi_2(T)$ as well as $\alpha_1+\alpha_2\equiv M$.
\item[(ii)] If $T<t_2$, we will show that $(\alpha_1,\alpha_2)\equiv (1,M-1)$ and that $\xi_1$ and $\xi_2$ cannot be brought together (i.e. $\xi_1(T)>\xi_2(T)$).
Indeed, knowing that $\alpha_1+\alpha_2\equiv M$, one can use \eqref{diffxi1} to compute: $(\xi_1-\xi_2)(t)=e^{-t}\left( (\xi_1-\xi_2)(0)-\int_0^t (\alpha_1-\alpha_2)(s)\bar{\xi}(s)e^s ds \right)$.
 Since $\bar{\xi}$ is fully determined, $t_{\text{min}}:=\min_{\alpha\in\mathcal{U}_M, \alpha_1+\alpha_2\equiv M} \{t\in [0,T] \text{ s.t. } (\xi_1-\xi_2)(t)=0\}$ is achieved by maximizing $(\alpha_1-\alpha_2)$, which gives: $(\alpha_1, \alpha_2)\equiv (1,M-1)$. As seen previously, by direct computation of \eqref{scalar2}, $t_{\text{min}}=t_2$ as defined above. Hence, if $T<t_2$, necessarily $\xi_1(T)>\xi_2(T)$. 
Then $\lambda_1(T)>\lambda_2(T)$ and according to Prop. \ref{prop_M12}, and to Prop. \ref{prop_equality}, $\lambda_1(t)>\lambda_2(t)> 0$ for all $t$. According to the PMP (see \ref{Sec:PMP}), the optimal strategy is $(\alpha_1, \alpha_2)\equiv (1,M-1)$. 
\end{itemize}
Now let $\xi_1(0)> 0, \; \xi_2(0)< 0$ and $\bar{\xi}(0)>0$. 
 We then distinguish four subcases.\\
 Firstly, let us prove that if $\xi_2(T)> 0$, then necessarily $T> t_1$. Indeed, if $0<\xi_2(T)\leq \xi_1(T)$, then $0<\lambda_2(T)\leq \lambda_1(T)$, and according to Proposition \ref{prop_M12}, $0<\lambda_2(t)\leq \lambda_1(t)$ for all $t\in [0,T]$. According to the PMP (see Section \ref{Sec:PMP}), $\alpha_1+\alpha_2\equiv M$. Hence $\bar{\xi}(t)=\bar{\xi}(0)e^{-Mt/2}$, and $\xi_2(t)=e^{-t}(\xi_2(0)+\bar{\xi}(0)\int_0^t(1-\alpha_2)e^{\frac{2-M}{2}s}ds)$. The minimum time $t_\text{min}$ needed to achieve $\xi_2(t_\text{min})>0$ is achieved for $(\alpha_1,\alpha_2)\equiv(1,M-1)$, which, after computation, gives $t_\text{min}=t_1$ as defined above. Hence, if $\xi_2(T)\geq 0$, then $T> t_1$.
\begin{itemize}\itemsep0pt \parskip0pt \parsep0pt
\item[(iii)] Let $T\geq t_2$. Let us prove that $\alpha_1+\alpha_2\equiv M$ and $\xi_1(T)=\xi_2(T)$. Such a control strategy exists. Indeed, take for example $(\beta_1,\beta_2)(t)=(1,M-1)$ on $[0,t_2]$ and $(\beta_1,\beta_2)(t)=(M/2,M/2)$ on $]t_2,T]$. Then, by direct computation of \eqref{scalar2}, $\xi_1^\beta(t)=\xi_2^\beta(t)$ for all $t\in [t_2,T]$ (where $\xi^\beta$ denotes the trajectory corresponding to the control $\beta$). Furthermore, $\beta$ is optimal since it minimizes $\bar{\xi^\beta}$ by using full control at all time and achieves $(\xi_1^\beta-\xi_2^\beta)^2(T)=0$ (see \eqref{functional_V}). In order to perform optimally, the control $\alpha$ must also satisfy $\alpha_1+\alpha_2\equiv M$ and $\xi_1(T)=\xi_2(T)$.
\item[(iv)] Let $T<t_0$. Since $t_0<t_1$, then as proved above, $\xi_2(T)\leq 0$. Suppose that $\xi_2(T)=0$. Then $\lambda_1(T)>\lambda_2(T)=0$ and according to Proposition \ref{prop_M12}, $\lambda_1(t)>\lambda_2(t)=0$ for all time $t$. Hence, $
\alpha_1\equiv 1$ (see Section \ref{Sec:PMP}). Then $\min_{\alpha_2} \{t\in [0,T] \text{ s.t. } \xi_2(t)=0\} = t_0$ as defined above (obtained for $\alpha_2\equiv 0$). This contradicts the condition on $T$. Hence, if $T<t_0$, then $\xi_2(T)<0$ and according to Proposition \ref{prop_M12} and Section \ref{Sec:PMP}, $\lambda_2<0$ so $ \alpha_2\equiv 0$. However, there is no information on $\lambda_1$ other than $\dot{\lambda}_1=\alpha_1 /2 \lambda_1+\bar{\lambda}-\lambda_1\geq 0$ and $\lambda_1(\tau)=0$ implies $\dot{\lambda}_1(\tau)>0$. Hence, as in the previous sections, there exists $t^*\in [0,T[$ such that $\lambda_1<0$ on $[0,t^*[$, $\lambda_1(t^*)=0$ and $\lambda_1>0$ on $]t^*,T]$. This implies that $(\alpha_1,\alpha_2)=(0,0)$ on $[0,t^*[$ and $(\alpha_1,\alpha_2)=(1,0)$ on $[t^*,T[$.
\item[(v)] Let $t_0\leq T \leq t_1$. We shall prove that $\xi_2(T)=0$ and that $\alpha_1\equiv 1$. As seen previously, if $T\leq t_1$, then $\xi_2(T)\leq 0$. Suppose that $\xi_2(T)<0$. Then $\lambda_1(T)>0$ and $\lambda_2(T)<0$ which according to Proposition \ref{prop_M12} gives $\lambda_2(t)<0$ for all $t$, and according to the PMP (see Section \ref{Sec:PMP}), $\alpha_2\equiv 0$. Then $\xi_2(t)=e^{-t} (\xi_2(0)+\bar{\xi}(0)\int_0^T e^{-\int_0^s\frac12\alpha_1(r)dr}e^s ds)$. Thus $t_{\text{sup}}:=\sup_{\alpha_1} \{\tau\in [0,T] \text{ s.t. } \xi_2(t)<0 \text{ for all } t\in [0,\tau[ \}$ is obtained for $\alpha_1\equiv 1$ and by direct computation, $t_{\text{sup}}=t_0$. Since $T\geq t_0$, there exists $\tau\leq T$ such that $\xi_2(\tau)=0$. However, by Proposition \ref{prop_positive}, once $\xi_2=0$ it cannot become negative again, which contradicts $\xi(T)<0$. Therefore, $\xi_2(T)=0$, and according to Proposition \ref{prop_M12} and the PMP (Section \ref{Sec:PMP}), $\lambda_1(t)>0$ for all $t\in [0,T]$ so $\alpha_1\equiv 1$. Furthermore, if $\xi_2(\tau)=0$, then $\dot{\xi}_2(\tau)=(1-\alpha_2(\tau))\bar{\xi}(\tau)>0$ since $\alpha_2=M-\alpha_1=M-1<1$. According to Proposition \ref{prop_positive}, once $\xi_2$ becomes positive it cannot become zero again. Hence we must have $\xi_2(t)<0$ for all $t<T$ and $\xi_2(T)=0$.
\item[(vi)] Let $t_1<T<t_2$. 
As in the previous case, since $T\geq t_0$, one must have: $\xi_2(T)\geq 0$. Suppose that $\xi_2(T)=0$. Then according to Proposition \ref{prop_M12} and the PMP, $\alpha_1\equiv 1$ and $\xi_2(t)=e^{-t} (\xi_2(0)+\bar{\xi}(0)\int_0^T (1-\alpha_2)(s) e^{-\int_0^s\frac12(1+\alpha_2)(r)dr}e^s ds)$. Then the minimum of $\xi_2(T)$ is obtained for $\alpha_2\equiv M-1$, so
\vspace{-0.3cm}
\[
\xi_2(T)\geq e^{-T} (\xi_2(0)+\bar{\xi}(0)\int_0^T (2-M) e^{-\frac12 M s}e^s ds) > e^{-T} (\xi_2(0)+\bar{\xi}(0) ( e^{\frac{2-M}{2}t_1}-1) ) > 0
\vspace{-0.2cm}
\]
by definition of $t_1$. This contradicts $\xi_2(T)=0$, so necessarily $\xi_2(T)>0$. Then $\lambda_1(t)>0$ and $\lambda_2(t)>0$ for all $t$, which implies that $\alpha_1+\alpha_2\equiv M$. In this case we prove as in case {\it (ii)} that $\xi_1(T)>\xi_2(T)$, which implies $(\alpha_1,\alpha_2)\equiv (1,M-1)$.
\end{itemize}
\vspace{-0.6cm}
\end{proof}

\newpage

\end{document}